\numberwithin{equation}{section}
\setlist[enumerate]{
	font=\normalfont,
	label=(\roman*),
	topsep=3pt,
	itemsep=-0.3ex,
	partopsep=1ex,
	parsep=1ex}
\def\namedlabel#1#2{\begingroup
	#2%
	\def\@currentlabel{#2}%
	\phantomsection\label{#1}\endgroup
}
\g@addto@macro\bfseries{\boldmath}
\newtheorem{thm}{Theorem}[section]
\newtheorem{cor}[thm]{Corollary}
\newtheorem{lem}[thm]{Lemma}
\newtheorem{prop}[thm]{Proposition}
\newtheorem*{thm*}{Theorem}
\theoremstyle{definition}
\newtheorem{dfn}[thm]{Definition}
\newtheorem{ntn}[thm]{Notation}
\theoremstyle{remark}
\newtheorem{rmk}[thm]{Remark}
\newtheorem{example}[thm]{Example}
\DeclareMathOperator{\id}{id}
\DeclareMathOperator{\lsp}{span}
\DeclareMathOperator{\clsp}{\overline{\lsp}}
\DeclareMathOperator{\MCE}{MCE}
\DeclareMathOperator{\Aut}{Aut}
\newcommand\restr[2]{{
		\left.\kern-\nulldelimiterspace 
		#1 
		\vphantom{|} 
		\right|_{#2} 
}}
\newcommand{\ol}[1]{\overline{#1}}
\newcommand{\fibre}[2]{\tensor*[^{}_{#1}]{*}{^{}_{#2}}}
\newcommand{\la}{\triangleright}
\newcommand{\ra}{\triangleleft}
\newcommand{\FF}{\mathbb{F}}
\newcommand{\II}{\mathbb{I}}
\newcommand{\NN}{\mathbb{N}}
\newcommand{\TT}{\mathbb{T}}
\newcommand{\ZZ}{\mathbb{Z}}
\newcommand{\Cc}{\mathcal{C}}
\newcommand{\Dd}{\mathcal{D}}
\newcommand{\Ee}{\mathcal{E}}
\newcommand{\Gg}{\mathcal{G}}
\newcommand{\Kk}{\mathcal{K}}
\newcommand{\Ll}{\mathcal{L}}
\newcommand{\Mm}{\mathcal{M}}
\newcommand{\Oo}{\mathcal{O}}
\newcommand{\Pp}{\mathcal{P}}
\newcommand{\Tt}{\mathcal{T}}
\newcommand{\Uu}{\mathcal{U}}
\newcommand{\Xx}{\mathcal{X}}
\newcommand{\Zz}{\mathcal{Z}}
\newcommand{\sCc}{\mkern1mu\Cc}
\newcommand{\bone}{\mathbbm{1}}
\newcommand{\CImap}{\zeta}
\newcommand{\hatCImap}{\zeta_{\star}} 
\newcommand{\frakCImap}{\mathfrak{z}}
\newcommand{\frakhatCImap}{\mathfrak{z}_{\star}}
\begin{document}
	
\DeclareRobustCommand{\subtitle}[1]{\\[1.5em] #1}	

\title[Cocycle homotopy invariance of $K$-theory for self-similar actions]{Self-similar groupoid actions on $k$-graphs, and invariance of $K$-theory for cocycle homotopies}


\author{Alexander Mundey}


\thanks{This research was supported by Australian Research Council grant DP220101631. The first author was supported by University of Wollongong AEGiS CONNECT grant 141765}
\author{Aidan Sims}

%

\date{\today}
\subjclass[2020]{46L05, 46L80 (primary); 20F65 (secondary)}
\keywords{Self-similar action; Zappa--Sz\'ep product; $K$-theory; twisted $C^*$-algebra; $k$-graph}

\begin{abstract}
We establish conditions under which an inclusion of finitely aligned
left-cancellative small categories induces inclusions of twisted
$C^*$-algebras. We also present an example of an inclusion of finitely
aligned left-cancellative monoids that does not induce a homomorphism even
between (untwisted) Toeplitz algebras. We prove that the twisted $C^*$-algebras of a jointly faithful self-similar action of a countable discrete amenable groupoid on a row-finite $k$-graph with no sources, with respect to homotopic cocycles, have isomorphic $K$-theory.
\end{abstract}

\maketitle

\renewcommand{\subtitle}[1]{}


\section{Introduction}

In this article, we establish that for self-similar actions of amenable groupoids on $k$-graphs, the $K$-theory of the twisted $C^*$-algebras is invariant under $2$-cocycle homotopy.

It is a recurring theme that the $K$-theory of twisted $C^*$-algebras is invariant under homotopies of $2$-cocycles (see for example \cite{Ell84,ELPW10,Gil15,KPS15}).
An early result of this nature is due to Elliott \cite{Ell84}, who showed that the $K$-theory of each rank-$n$ noncommutative torus---which can be regarded as a twisted group $C^*$-algebra of $\ZZ^n$---is isomorphic to the $K$-theory of the (untwisted) group $C^*$-algebra $C^*(\ZZ^n) \cong C(\TT^n)$. Elliott's proof involves inducting on the dimension $n$, using a five-lemma argument based on naturality of the Pimsner--Voiculescu exact sequence. We employ a technique based on Elliott's argument to prove our main theorem.

In \cite{MS?} we introduced twisted $C^*$-algebras for self-similar groupoid actions on $k$-graphs. A $2$-cocycle $\sigma$ for a self-similar action of a groupoid $\Gg$ on a $k$-graph $\Lambda$ is, by definition, a categorical $2$-cocycle on the Zappa--Sz\'ep product category $\Lambda \bowtie \Gg$. The corresponding twisted $C^*$-algebra $C^*(\Lambda \bowtie \Gg, \sigma)$ agrees, in the untwisted case, with the algebras of~ \cite{NekCP,LRRW18,ABRW}. It is natural to ask about invariance of $K$-theory as $\sigma$ varies continuously. Our main theorem can be summarised as follows.

\begin{thm*}[{\cref{thm:cocycle_homotopy_independence}}]
	Fix $k \ge 0$. Suppose that $\Gg$ is a countable discrete amenable groupoid acting self-similarly and jointly faithfully on a row-finite $k$-graph $\Lambda$ with no sources. If $\sigma_0$ and $\sigma_1$ are homotopic $2$-cocycles on $\Lambda \bowtie \Gg$, then
	\[
	K_*(C^*(\Lambda \bowtie \Gg, \sigma_0)) \cong K_*(C^*(\Lambda \bowtie \Gg, \sigma_1)).
	\]
\end{thm*}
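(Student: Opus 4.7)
The plan is to mimic Elliott's induction from \cite{Ell84}, inducting on the rank $k$ and applying a Pimsner--Voiculescu-type six-term exact sequence together with the five-lemma at each step. To ensure that the vertical connecting maps intertwine correctly, I would package the homotopy $(\sigma_t)_{t \in [0,1]}$ into a single $C([0,1])$-algebra $A_\bullet$ with fibres $A_t = C^*(\Lambda \bowtie \Gg, \sigma_t)$, and build the whole Pimsner--Voiculescu sequence as a short exact sequence of $C([0,1])$-algebras. Evaluating at $t = 0, 1$ then produces two PV six-term sequences fitting into a canonical commuting ladder, so naturality of the connecting maps is automatic.

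For the base case $k = 0$, the category $\Lambda$ consists only of objects, $\Lambda \bowtie \Gg \cong \Gg$, and each fibre $A_t$ is a twisted groupoid $C^*$-algebra of the countable discrete amenable groupoid $\Gg$. Amenability guarantees that $A_\bullet$ is a continuous field, and a standard contractibility argument (via the mapping cylinder of $\mathrm{ev}_0$, say) shows that the evaluation maps at $t = 0, 1$ induce isomorphisms on $K$-theory. For the inductive step, I would set $\Lambda' := d^{-1}(\NN^{k-1} \times \{0\})$, a row-finite sub-$(k-1)$-graph with no sources on which $\Gg$ still acts jointly faithfully, restrict $\sigma_t$ to a homotopy $(\sigma_t')$ of cocycles on $\Lambda' \bowtie \Gg$, and apply the inclusion theorem from earlier in the paper to get $C^*(\Lambda' \bowtie \Gg, \sigma_t') \hookrightarrow C^*(\Lambda \bowtie \Gg, \sigma_t)$. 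I would then construct a Toeplitz-type $C([0,1])$-algebra $\Tt_\bullet$ fitting into a short exact sequence
\[
0 \to C^*(\Lambda' \bowtie \Gg, \sigma_\bullet') \ox \Kk \to \Tt_\bullet \to A_\bullet \to 0
\]
that isolates the generators indexed by $k$-th coordinate edges as a shift over the $(k-1)$-graph core. The inductive hypothesis gives isomorphisms on the $K$-groups of the first term; the five-lemma applied to the two evaluated six-term sequences then yields the desired isomorphism.

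The principal obstacle is constructing this Pimsner--Voiculescu sequence in a form that is both $C([0,1])$-linear under a cocycle homotopy and compatible with the Zappa--Sz\'ep factorisation. In the untwisted $k$-graph setting the analogous construction appears in \cite{KPS15}, but here two additional issues must be handled: the shift generators $\{s_\mu : d(\mu) = e_k\}$ interact nontrivially with the groupoid action and acquire extra twisting from $\sigma$, so the shift endomorphism on the core must be described via the Zappa--Sz\'ep rewriting rules; and $\sigma$ need not respect the $\NN^k$-grading, so one must verify that its restriction to $\Lambda' \bowtie \Gg$ and its induced cocycle on the quotient behave well enough for the ideal to be identified as the stabilisation of the core. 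Once the sequence is assembled with the correct $C([0,1])$-structure, the rest of the induction is formal.
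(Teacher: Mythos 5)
Your overall architecture is the paper's: package the homotopy into a $C(\II)$-algebra, induct on the rank, realise the rank-$(k+1)$ algebra as a Pimsner-type algebra over the rank-$k$ core $\Gamma\bowtie\Gg$, and run the Five Lemma against the evaluation maps. (The paper works with the Cuntz--Pimsner algebra $\Oo_X$ of an explicit correspondence $X = \clsp\{\mathfrak{s}_e a\}$ over $C^*_\II(\Gamma\bowtie\Gg,\Sigma)$ and quotes the Katsura/Pimsner six-term sequence and its naturality, rather than assembling a stabilised Toeplitz extension by hand, but these are interchangeable; the twisting and Zappa--Sz\'ep rewriting issues you flag are exactly what \cref{lem:correspondence} and \cref{prop:CP-model} resolve. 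There is no ``induced cocycle on the quotient'' to worry about --- only the restriction of $\Sigma$ to $\Gamma\bowtie\Gg$ and the values $\Sigma_\bullet(\lambda,\mu)$ entering the module structure are needed.)

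The genuine gap is the base case. You assert that for $k=0$ ``a standard contractibility argument (via the mapping cylinder of $\mathrm{ev}_0$)'' shows the evaluation maps are $K$-isomorphisms. No such formal argument exists. A $C(\II)$-algebra is not in general $KK$-equivalent to its fibres, and the mapping cylinder of $\mathrm{ev}_0$ being homotopy equivalent to the fibre tells you nothing about $\mathrm{ev}_0$ itself; what you would need is that the mapping \emph{cone} of $\mathrm{ev}_0$ is $K$-contractible, which is precisely the statement to be proved, not an input. Even for $\Gg=\ZZ^n$ this is Elliott's theorem \cite{Ell84} and already requires the full inductive Pimsner--Voiculescu argument. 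The paper's proof of \cref{prop:groupoids_K_cts} instead cuts down by a full multiplier projection associated to a transversal of the $\Gg$-orbits to reduce to a direct sum over the isotropy groups $x\Gg x$, and then invokes \cite[Theorem~0.3]{ELPW10}, which applies because countable amenable groups satisfy the Baum--Connes conjecture with coefficients \cite{HigsonKasparov}. This is the one place where amenability does genuine $K$-theoretic work; without some such input the base case does not close and the induction never starts.
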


Indeed, like Elliott, we prove the stronger statement that the twisted $C^*$-algebras for the cocycles along the homotopy assemble into a $C([0,1])$-algebra for which the point-evaluation $C^*$-homomorphisms all induce isomorphisms in $K$-theory. As in Elliott's argument, we induct on $k$. However, we use the Pimsner exact sequence for Cuntz--Pimsner algebras \cite{Pim97} rather than the Pimsner--Voiculescu sequence for crossed products \cite{PimVoi}. In particular, we show that the twisted $C^*$-algebra $C^*(\Lambda \bowtie \Gg,\sigma)$, with $\Lambda$ a $(k+1)$-graph, can be realised as the Cuntz--Pimsner algebra of a $C^*$-correspondence over $C^*(\Gamma \bowtie \Gg,\sigma)$, where $\Gamma \subseteq \Lambda$ is a sub-$k$-graph.

To do so, we investigate how inclusions of finitely aligned left-cancellative small categories correspond to inclusions of twisted versions of the associated $C^*$-algebras introduced by Spielberg~\cite{Spi20}. This turns out to be more complicated than we expected. We give an example (\cref{example:concordance_counterexample}) of an inclusion of finitely aligned left-cancellative monoids that does not induce a $*$-homomorphism between the associated Toeplitz algebras.

The paper is structured as follows.  \cref{sec:prelims} contains preliminary material on self-similar actions and Zappa--Sz\'ep products. In \cref{sec:twisted_category_algs} we introduce twisted $C^*$-algebras of finitely aligned left-cancellative small categories, generalising the untwisted algebras of Spielberg \cite{Spi20}. In particular, we identify sufficient conditions for an inclusion of categories to induce inclusions of $C^*$-algebras. In \cref{sec:cocycle_homotopy} we introduce the $C^*$-algebra of a finitely aligned left-cancellative small category twisted by a homotopy of $2$-cocycles, and use it to prove our main result, \cref{thm:cocycle_homotopy_independence}.

\section{Preliminaries}
\label{sec:prelims}

\subsection{Categories, groupoids, and \texorpdfstring{\boldmath$k$}{k}-graphs}
Throughout this article, $\Cc$ denotes a countable discrete small category. We identify $\Cc$ with its set of morphisms and denote its set of objects (identified with the corresponding identity morphisms) by $\Cc^0$. The domain and codomain maps become maps $r \colon \Cc \to \Cc^0$ and $s \colon \Cc \to \Cc^0$ taking a morphism to its \emph{range} and \emph{source}. We write $\Cc^n$ for the collection of composable $n$-tuples in $\Cc$. That is $(c_1,\ldots,c_n) \in \Cc^n$ if $s(c_i) = r(c_{i+1})$ for all $i$. We extend $r$ and $s$ to $\Cc^n$ by $r(c_1,\ldots,c_n) = r(c_1)$ and $s(c_1,\ldots,c_n) = s(c_n)$. For $c_1,c_2 \in \Cc$ we define
\begin{gather*}
	c_1 \Cc \coloneqq \{ c_1 c \colon (c_1,c) \in \Cc^2\},
	\quad \Cc c_2 \coloneqq \{ c c_2 \colon (c,c_2) \in \Cc^2\}, \quad \text{and}\\
	c_1 \Cc c_2 \coloneqq \{c_1 c c_2 : (c_1, c, c_2) \in \Cc^3\};
\end{gather*}
if $c_1, c_2 \in C^0$, then $c_1 \Cc c_2 = c_1 \Cc \cap \Cc c_2$.

A \emph{groupoid} $\Gg$ is a small category such that every $g \in \Gg$ has an inverse $g^{-1} \in \Gg$ such that $gg^{-1} = r(g)$ and $g^{-1}g = s(g)$.

Let $k \in \NN$. A \emph{$k$-graph} is a countable small category $\Lambda$ together with a functor $d \colon \Lambda \to \NN^k$, called the \emph{degree functor}, such that composition in $\Lambda$ satisfies the \emph{unique factorisation property}: for each $\lambda \in \Lambda$ and $m,n \in \NN^k$ such that $d(\lambda) = m + n$ there exist unique $\mu,\nu \in \Lambda$ such that $\lambda = \mu \nu$, $d(\mu) = m$, and $d(\nu) = n$. If $d(\lambda) = n$ we say that $\lambda$ has \emph{degree} $n$. We write $\Lambda^n \coloneqq d^{-1}(n)$. We call elements of $\Lambda^0$ \emph{vertices}.
We denote the element of $\NN^k$ with a $1$ in the $i$-th component and $0$ elsewhere by $e_i$.
For $m,n \in \NN^k$ we write $m \le n$ if $m_i \le n_i$ for all $i$.

A $1$-graph is just the category of finite paths $E^*$ in a directed graph $E$ with composition given by concatenation of paths and $d \colon E^* \to \NN$ taking a path to its length. 
As in \cite{KPS11} we allow $0$-graphs: by $\NN^0$ we mean the $1$-element monoid $\{0\}$, so a $0$-graph is just a set $\Lambda$ regarded as a small category consisting solely of identity morphisms and the degree map is the unique map $d \colon \Lambda \to \{0\}$.

A $k$-graph $\Lambda$ is \emph{row-finite} if $|v\Lambda^n| < \infty$ for all $n \in \NN^k$ and $v \in \Lambda^0$. It has \emph{no sources} if $v\Lambda^n \ne \varnothing$ for all $n \in \NN^k$ and $v \in \Lambda^0$.
It is \emph{locally convex} if for every $1 \le i,j \le k$ with $i \ne j$, whenever $e \in \Lambda^{e_i}$ and $r(e) \Lambda^{e_j} \ne \varnothing$, we have $s(e) \Lambda^{e_j} \ne \varnothing$. Every $k$-graph with no sources is locally convex.

For each $n \in \NN^k$ we define, as in \cite{RSY03},
\[
\Lambda^{\le n} \coloneqq
\{
\lambda \in \Lambda \mid d(\lambda) \le n \text{ and if } d(\lambda)_i < n_i \text{ then } s(\lambda)\Lambda^{e_i} = \varnothing
\}.
\]
It is potentially confusing that $\Lambda^{\le n} \ne \bigcup_{m \le n} \Lambda^m$; however the notation is, by now, standard.

If $d(\lambda)  = n$, then the condition for membership of $\lambda$ in $\Lambda^{\le n}$ is vacuous. So $\Lambda^n \subseteq \Lambda^{\le n}$. An induction using \cite[Lemma~3.12]{RSY03} (used implicitly in the proof of \cite[Proposition~3.11]{RSY03}) shows that if $\Lambda$ is a locally convex $k$-graph then
\begin{equation}\label{eq:le_factorisation}
	\Lambda^{\le m+n} = \Lambda^{\le m} \Lambda^{\le n}
\end{equation}
for all $m,n \in \NN^k$.

The following lemma is used implicitly in \cite{RSY03} but never stated explicitly. We need it for the proof of \cref{lem:k_graph_exhaustive}.
\begin{lem}\label[lem]{lem:le_extensions_equal}
	Let $\Lambda$ be a locally convex $k$-graph. Suppose that $\mu,\nu \in \Lambda^{\le n}$ and $\mu \Lambda \cap \nu \Lambda \ne \varnothing$. Then $\mu = \nu$. More generally, if $\nu \in \Lambda^{\le n}$, $d(\mu) \le n$, and $\mu \Lambda \cap \nu \Lambda \ne \varnothing$, then $\nu \in \mu \Lambda$.
\end{lem}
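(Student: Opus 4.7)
The plan is to prove the more general second statement directly and deduce the first by symmetry (if $\nu \in \mu\Lambda$ and, with roles exchanged, $\mu \in \nu\Lambda$, then comparing degrees via unique factorisation forces $\mu = \nu$).

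So suppose $\nu \in \Lambda^{\le n}$, $d(\mu) \le n$, and there exist $\alpha, \beta \in \Lambda$ with $\mu\alpha = \nu\beta$. Write $p \coloneqq d(\mu)$, $q \coloneqq d(\nu)$, and $r \coloneqq p \vee q$, so $r \le n$. Since $d(\mu\alpha) = d(\nu\beta) \ge r$, the unique factorisation property lets me write $\mu\alpha = \lambda\eta$ with $d(\lambda) = r$. Applying unique factorisation twice more (once at degree $p$, once at degree $q$) yields $\lambda' \in \Lambda^{r - p}$ and $\nu' \in \Lambda^{r - q}$ with $\lambda = \mu\lambda' = \nu\nu'$.

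The key step is to show $r = q$, from which $\lambda = \nu$ and hence $\nu = \mu\lambda' \in \mu\Lambda$, as desired. Suppose for contradiction that $q_i < r_i$ for some $i$. Then $d(\nu')_i \ge 1$, so unique factorisation gives $\nu' = e\nu''$ with $e \in \Lambda^{e_i}$; in particular $s(\nu)\Lambda^{e_i} \ne \varnothing$. But $\nu \in \Lambda^{\le n}$ and $q_i < r_i \le n_i$ together force $s(\nu)\Lambda^{e_i} = \varnothing$, a contradiction. Hence $r = q$, and the proof is complete.

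For the first statement, the symmetric argument using $\mu \in \Lambda^{\le n}$ shows $\mu \in \nu\Lambda$ as well; writing $\mu = \nu\xi$ and $\nu = \mu\xi'$ and comparing degrees (both $\xi, \xi'$ then have degree $0$, i.e. are identity morphisms at $s(\mu) = s(\nu)$) gives $\mu = \nu$.

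The only nontrivial ingredient is the locally convex hypothesis; it is used implicitly to know that $r = p \vee q \le n$ means all intermediate degrees encountered are still bounded by $n$, so that the $\Lambda^{\le n}$ condition at coordinate $i$ (for $i$ with $q_i < n_i$) really does apply. I do not expect a serious obstacle: the entire argument is a careful bookkeeping of unique factorisations paired with the defining property of $\Lambda^{\le n}$.
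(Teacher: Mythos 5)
Your proof is correct, but it takes a genuinely different route from the paper's. The paper proves the special case first: it invokes the factorisation $\Lambda^{\le n} = \Lambda^{\le d(\nu)}\Lambda^{\le n - d(\nu)}$ of \labelcref{eq:le_factorisation} --- which is exactly where local convexity enters, via \cite[Lemma~3.12]{RSY03} --- to write $\mu \in \nu\Lambda^{\le n - d(\nu)}$, observes that the defining property of $\Lambda^{\le n}$ forces $s(\nu)\Lambda^{\le n - d(\nu)} = \{s(\nu)\}$, and concludes $\mu = \nu$; the general statement is then deduced by factoring $\nu = \nu'\nu''$ with $\nu' \in \Lambda^{\le d(\mu)}$ (again via \labelcref{eq:le_factorisation}) and applying the special case to $\mu$ and $\nu'$. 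You go in the opposite direction: you prove the general statement directly by unique-factorisation bookkeeping on the degree-$(d(\mu)\vee d(\nu))$ initial segment of a common extension, and recover the special case by symmetry. Your argument is the more elementary of the two, since it bypasses \labelcref{eq:le_factorisation} altogether. One remark: your closing comment about where local convexity is used is off the mark. The inequality $r = d(\mu)\vee d(\nu) \le n$ follows from $d(\mu) \le n$ and $d(\nu) \le n$ alone, and in fact your argument nowhere invokes local convexity, so it actually proves the lemma for arbitrary $k$-graphs. That is not a defect --- the stated lemma is then a special case --- but you should be aware that the hypothesis is doing no work in your proof, whereas it is essential to the paper's route through \labelcref{eq:le_factorisation}.
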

\begin{proof}
	By \labelcref{eq:le_factorisation} we have $\mu \in \nu \Lambda^{\le n - d(\nu)}$. Since $\nu \in \Lambda^{\le n}$, we have $s(\nu) \Lambda^{e_i} \ne \varnothing$ whenever $e_i \le n - d(\nu)$. Hence, $s(\nu)\Lambda^{\le n - d(\nu)} = \{s(\nu)\}$. So $\mu = \nu s(\nu) = \nu$.
	
	For the second statement suppose that $\nu \in \Lambda^{\le n}$, $d(\mu) \le n$, and $\mu \Lambda \cap \nu \Lambda \ne \varnothing$. Since $d(\mu) \le n$, \cref{eq:le_factorisation} gives $\nu' \in \Lambda^{\le d(\mu)}$ and $\nu'' \in \Lambda^{\le n - d(\mu)}$ such that $\nu = \nu' \nu''$. Since $\mu \Lambda \cap \nu \Lambda \ne \varnothing$ we have $\mu \Lambda \cap \nu' \Lambda \ne \varnothing$ with $\mu,\nu' \in \Lambda^{\le d(\mu)}$. So the first statement gives $\mu = \nu'$.
\end{proof}

\subsection{Zappa--Sz\'ep products and self-similar actions}

For details of the following, see \cite[Section~3]{MS?}. A \emph{left action} of a small category $\Cc$ on a small category $\Dd$ with $\Cc^0 = \Dd^0$ consists of a map
\[
\la \colon \Cc \fibre{s}{r} \Dd  \coloneqq \{(c,d) \in \Cc \times \Dd \mid s(c) = r(d)\} \to \Dd
\] such that  $r(d) \la d = d$, $c \la s(c) = r(c)$, $r(c \la d) = r(c)$, and $(cc') \la d = c \la (c' \la d)$ for all $c,c' \in \Cc$ and $d \in \Dd$ for which the formulas make sense.
A \emph{right action} $\ra \colon \Cc \fibre{s}{r} \Dd \to \Cc$ is defined symmetrically.

Following \cite[Definition~3.1]{MS?}, a \emph{matched pair} $(\Cc,\Dd)$ consists of a pair of small categories $\Cc$ and $\Dd$ with $\Cc^0 = \Dd^0$, together with a left action $\la \colon \Cc \fibre{s}{r} \Dd \to \Dd$ of $\Cc$ on $\Dd$ and a right action $\ra \colon \Cc \fibre{s}{r} \Dd \to \Cc$ of $\Dd$ on $\Cc$ such that $s(c \la d) = r(c \ra d)$ for all $(c,d) \in \Cc \fibre{s}{r} \Dd$ and such that for all $(c_1,c_2,d_1,d_2) \in \Cc^2 \fibre{s}{r} \Dd^2$ we have
\[
c_2 \la (d_1d_2) = (c_2 \la d_1)((c_2 \ra d_1) \la d_2 ) \quad \text{ and } \quad
(c_1c_2) \ra d_1 = (c_1 \ra (c_2 \la d_1)) (c_2 \ra d_1).
\]
\begin{dfn}[cf.~{\cite[Definition~3.6]{MS?}}]
	Let $(\Cc,\Dd)$ be a matched pair of small categories. The \emph{Zappa--Sz\'ep} product $\Dd \bowtie \Cc$ is the small category with objects $\Cc^0 = \Dd^0$ and morphisms $\{dc \mid (d,c) \in \Dd \fibre{s}{r} \Cc \}$, in which the range and source maps are given by $r(dc) = r(d)$ and $s(dc) = s(c)$, and composition is defined by the formula
	\[
	d_1c_1 d_2 c_2 = d_1(c_1 \la d_2) (c_1 \ra d_2) c_2
	\]
	whenever $d_1c_1, d_2c_2 \in \Dd \bowtie \Cc$ with $s(c_1) = r(d_2)$.
\end{dfn}

\begin{rmk}
	Our notation for the Zappa--Sz\'ep product of the matched pair $(\Cc,\Dd)$ is reversed relative to \cite{MS?}: there it would have been denoted $\Cc \bowtie \Dd$ rather than $\Dd \bowtie \Cc$. 
	The anonymous referee has pointed out that the convention we are now using is the one that is consistent with the literature \cite{BKQS18,Bri05,BPRRW17,LV22} and reflects the fact that as a set the Zappa--Sz\'ep product is $\Dd * \Cc$.
\end{rmk}

By \cite[Lemma~3.5]{MS?} the Zappa--Sz\'ep product $\Dd \bowtie \Cc$ is indeed a small category. By \cite[Proposition~3.13]{MS?} it is characterised by a unique-factorisation property: if $\Ee$ is a small category and $\Cc$ and $\Dd$ are wide subcategories of $\Ee$ such that for any $e \in \Ee$ there are unique $d \in \Dd$ and $c \in \Cc$ such that $e = dc$, then $\Ee$ is isomorphic to a Zappa--Sz\'ep product $\Dd \bowtie \Cc$.


In \cite[Proposition~{3.29}]{MS?} it is shown that $k$-graphs can be described as Zappa--Sz\'ep products. We recap this briefly in the following example.
\begin{example} \label[example]{ex:k-graph_zs}
	
	Take $k \ge 0$ and let $\Lambda$ be a $k$-graph. Fix $p,q \ge 0$ such that $p+q = k$. Write $\NN^p \times \{0\}$ for the submonoid of $\NN^k$ consisting of tuples whose last $q$ coordinates are 0, and write $\Lambda^{\NN^p \times \{0\}} \coloneqq d^{-1}(\NN^p \times \{0\}) \subseteq \Lambda$. Identifying $\NN^p \times \{0\}$ with $\NN^p$ in the obvious way, $\Lambda^{\NN^p \times \{0\}}$ is a $p$-graph with respect to the restriction of the degree functor on $\Lambda$. Similarly, we can identify $\{0\} \times \NN^q \subseteq \NN^k$ with $\NN^q$ and then $\Lambda^{\{0\} \times \NN^q} \coloneqq d^{-1}(\{0\} \times \NN^q)$ is a $q$-graph.
	
	If $\lambda \in \Lambda^{\NN^p \times \{0\}}$ and $\mu \in s(\lambda)\Lambda^{\{0\} \times \NN^q}$, then uniqueness of factorisation in $\Lambda$ gives $\mu' \in \Lambda^{\{0\} \times \NN^q}$ and $\lambda' \in \Lambda^{\NN^p \times \{0\}}$ such that $\lambda \mu = \mu' \lambda'$. Setting $\lambda \la \mu = \mu'$ defines a left action of $\Lambda^{\NN^p \times \{0\}}$ on $\Lambda^{\{0\} \times \NN^q}$ and setting $\lambda \ra \mu = \lambda'$ defines a right action of $\Lambda^{\{0\} \times \NN^q}$ on $\Lambda^{\NN^p \times \{0\}}$.	
	Uniqueness of factorisation in $\Lambda$ implies that $\Lambda \cong \Lambda^{\{0\} \times \NN^q} \bowtie \Lambda^{\NN^p \times \{0\}} $.
	In particular, if $\Lambda$ is a $(k+1)$-graph, then $\Gamma \coloneqq \Lambda^{\NN^k \times \{0\}}$ is a $k$-graph and $\Lambda^{\{0\} \times \NN}$ is the path category $E^*$ of a directed graph $E$ such that $\Lambda \cong E^* \bowtie \Gamma$.
\end{example}

Zappa--Sz\'ep products also capture self-similar actions of groupoids on $k$-graphs (see~\cite[Proposition~3.32]{MS?}). We use the following definition of a self-similar action.

\begin{dfn}[{\cite[Definition~3.33]{MS?}}]
	\label{dfn:self-similar_action}
	Let $\Lambda$ be a $k$-graph and let $\Gg$ be a discrete groupoid with $\Gg^0 = \Lambda^0$. A \emph{self-similar action} of $\Gg$ on $\Lambda$ is a matched pair $(\Gg,\Lambda)$ such that $d(g \la \lambda) = d(\lambda)$ for all $g,\lambda$. We say that \emph{$\Gg$ is a discrete groupoid acting self-similarly on $\Lambda$.}
\end{dfn}

\section{Twisted \texorpdfstring{\boldmath$C^*$}{C*}-algebras of finitely aligned left-cancellative small categories}
\label{sec:twisted_category_algs}

In this section we introduce twisted $C^*$-algebras for finitely aligned left-cancellative small categories $\Cc$ as introduced in \cite{Spi20}. We use the notation and setup of \cite{BKQS18}. The examples we have in mind are $\Cc = \Gg$ a discrete group(oid), or $\Cc = \Lambda$ a row-finite $k$-graph with no sources (see Examples \ref{ex:groupoid}~and~\ref{ex:k-graph}).

\subsection{Finitely aligned left-cancellative small categories}
A small category $\Cc$ is \emph{left-cancellative} if whenever $a,b,c \in \Cc$ satisfy $s(a) = r(b) = r(c)$,
\[
ac = ab \quad \text{implies} \quad c = b.
\]
Given a left-cancellative small category $\Cc$, we define an equivalence relation on $\Cc$ by
\[
a \sim b \qquad \text{ if and only if} \qquad \text{ there is an invertible } c \in \Cc \text{ such that } a = bc.
\]
Equivalently, $a \sim b$ if and only if $a \sCc = b \Cc$. That is, $a$ and $b$ generate the same principal right ideals. 

We extend the notion of equivalence to subsets of $\Cc$. If $A,B \subseteq \Cc$ then we say that $A \sim B$ if for every $a \in A$ there exists $b \in B$ such that $a \sim b$ and for every $b \in B$ there exists $a \in A$ such that $b \sim a$. If $A \sim B$, then
\begin{equation}\label{eq:F_unions_equal}
	\bigcup_{a \in A} a\sCc = \bigcup_{b \in B} b \sCc,
\end{equation}
but the converse does not typically hold.

If $a,a' \in A$ satisfy $a \in a' \Cc$ then $a \sCc \subseteq a' \Cc$. We say that $A \subseteq \Cc$ is \emph{independent} if for all distinct $a,a' \in A$ we have $a \notin a' \Cc$.

\begin{lem}[{cf. \cite[p.~1350]{BKQS18}}]
	\label[lem]{lem:independent_unions_equal} Let $\Cc$ be a left-cancellative small category.
	If $A,B \subseteq \Cc$ are independent and \eqref{eq:F_unions_equal} holds, then $A \sim B$, and $|A| = |B|$.
\end{lem}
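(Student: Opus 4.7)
The plan is to use left-cancellativity plus independence to upgrade the hypothesis, which is only about the unions of principal right ideals, into the stronger statement that each $a \in A$ is related to some $b \in B$ by right multiplication by an invertible element.

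First I would fix $a \in A$. Since $a = a \cdot s(a) \in a\sCc$, the hypothesis \labelcref{eq:F_unions_equal} gives some $b \in B$ with $a \in b\Cc$, say $a = bc$. Applying the same argument to $b$, there exist $a' \in A$ and $c' \in \Cc$ with $b = a'c'$. Then $a = a'(c'c) \in a'\Cc$, so independence of $A$ forces $a = a'$, and hence $a = a(c'c)$ and $b = b(cc')$.

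Next I would extract invertibility of $c$ and $c'$ from these two equations. Using $a = a \cdot s(a) = a(c'c)$, left-cancellativity gives $c'c = s(a)$; similarly $cc' = s(b)$. Checking sources and ranges from $a = bc$ and $b = ac'$ shows that these compositions make sense and that $c'c$ and $cc'$ are both identity morphisms, so $c$ and $c'$ are mutually inverse. Therefore $a \sim b$. Running the symmetric argument starting from an arbitrary $b \in B$ yields the relation $A \sim B$.

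Finally, to obtain $|A| = |B|$, I would define a map $\phi \colon A \to B$ sending $a$ to \emph{the} $b \in B$ with $a \sim b$. Well-definedness requires uniqueness: if $b, b' \in B$ satisfy $b \sim b'$, then $b \in b'\Cc$, so independence of $B$ forces $b = b'$. A symmetric definition gives an inverse $\psi \colon B \to A$, and the two are mutual inverses because $\sim$ is symmetric and the pairing it gives is unique on each side. Hence $\phi$ is a bijection.

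The main obstacle, modest though it is, is the step that turns the bare containment $a \in b\Cc$ into $a \sim b$: one has to chain back through $B$ to produce $a'$ and $c'$, and then use left-cancellativity twice to squeeze out the fact that $c$ has a two-sided inverse in $\Cc$. Independence of $A$ is what collapses $a'$ onto $a$ and makes the second cancellation possible.
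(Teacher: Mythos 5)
Your proof is correct and follows essentially the same route as the paper's: fix $a \in A$, chain through some $b \in B$ back to $a' \in A$, use independence of $A$ to collapse $a' = a$, and then use independence of $B$ for uniqueness and the resulting bijection. The only cosmetic difference is that you re-derive inline (via left-cancellativity) the equivalence between $a\Cc = b\Cc$ and $a \sim b$, which the paper established immediately after defining $\sim$ and simply invokes.
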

\begin{proof}
	Fix $a \in A$. Since \eqref{eq:F_unions_equal} holds, there exists $b \in B$ such that $a \in b \sCc$ and there exists an $a' \in A$ such that $b \in a' \Cc$. Hence, $a \sCc \subseteq b \sCc \subseteq a' \Cc$. Independence of $A$ implies that $a = a'$ so $a \sCc = b \sCc$ and therefore $a \sim b$. If $b' \in B$ also satisfies $a \sim b'$ then $b \sim b'$ and independence of $B$ implies that $b = b'$. So there is a unique function $a \mapsto b_a$ from $A$ to $B$ such that $a \sim b_a$ for all $a \in A$.
	A symmetric argument yields a function $b \mapsto a_b$ from $B \to A$
	such that $b \sim a_b$ for all $b \in B$, and this function is inverse to $a \mapsto b_a$.
\end{proof}

We say that a left-cancellative small category $\Cc$ is \emph{finitely aligned} if for all $a,b \in \Cc$ there is a finite subset $F \subseteq \Cc$ such that
\begin{equation}\label{eq:finitely_aligned}
	a \sCc \cap b \sCc = \bigcup_{c \in F} c \sCc \eqqcolon F \Cc.
\end{equation}
Since $F$ is finite, by passing to a subset we can assume that $F$ is independent, and therefore unique up to equivalence by \cref{lem:independent_unions_equal}. In this paper, we work exclusively with finitely aligned left-cancellative small categories.

If $\Cc$ is finitely aligned, an induction on $|A|$ shows that for any finite set $A \subseteq \Cc$ there is a finite independent set $F \subseteq \Cc$, unique up to equivalence, such that
\[
\bigcap_{a \in A} a \sCc = \bigcup_{c \in F} c \sCc.
\]
Following \cite{BKQS18} we write $\bigvee A$ for a choice of such a finite independent set $F$. If $a,b \in \Cc$ then we write $a \vee b$ instead of $\bigvee \{a,b\}$.

Let $v \in \Cc^0$. 	A subset $A \subseteq v\Cc$ is \emph{exhaustive} if for every $c \in v\Cc$ there exists $a \in A$ such that $c \sCc \cap a\sCc \ne \varnothing$.

\begin{lem}[{\cite[Lemma~2.3]{BKQS18}}]
	Let $\Cc$ be a left-cancellative small category. For $v \in \Cc^0$, if $A \subseteq v \Cc$ is exhaustive and $A \sim B$, then $B \subseteq v \Cc$ is exhaustive.
\end{lem}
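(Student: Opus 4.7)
The plan is to unpack the definitions of $\sim$ and of exhaustiveness and observe that both required properties follow almost immediately.

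First I would check that $B \subseteq v\Cc$. Fix $b \in B$. Since $A \sim B$, there exists $a \in A$ with $a \sim b$, so there is an invertible $c \in \Cc$ with $a = bc$. Then $r(b) = r(bc) = r(a) = v$, using that $a \in A \subseteq v\Cc$, so $b \in v\Cc$. Thus $B \subseteq v\Cc$.

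Next I would verify exhaustiveness of $B$. Let $c \in v\Cc$. By exhaustiveness of $A$, there is some $a \in A$ with $c\sCc \cap a\sCc \neq \varnothing$. Since $A \sim B$, choose $b \in B$ with $a \sim b$, which means $a\sCc = b\sCc$. Substituting, $c\sCc \cap b\sCc = c\sCc \cap a\sCc \neq \varnothing$. Hence $B$ is exhaustive.

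The argument is essentially trivial once one notes two things: equivalence $a \sim b$ preserves both the range (because $a = bc$ with $c$ invertible forces $r(a) = r(b)$) and the principal right ideal ($a\sCc = b\sCc$). There is no real obstacle; the lemma is just recording that the notions of ``exhaustive'' and ``subset of $v\Cc$'' depend only on the principal right ideals generated by elements, and are therefore invariant under the equivalence relation $\sim$.
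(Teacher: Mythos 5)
Your proof is correct and complete: both halves of the argument (that $B \subseteq v\Cc$, via $r(b) = r(bc) = r(a) = v$ for an invertible $c$, and that exhaustiveness transfers, via $a\sim b \Rightarrow a\Cc = b\Cc$) are exactly what is needed. The paper itself gives no proof --- it cites the result as \cite[Lemma~2.3]{BKQS18} --- so there is nothing to compare against, but your argument is the standard one and matches the observation you close with: membership of $v\Cc$ and exhaustiveness depend only on the principal right ideals, which are invariant under $\sim$.
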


\begin{example}\label[example]{ex:groupoid}
	Every groupoid (and hence every group) $\Gg$  is a finitely aligned left-cancellative small category since $g \Gg = r(g)\Gg$ for all $g \in \Gg$.
\end{example}

\begin{example}\label[example]{ex:k-graph}
	The path category $E^*$ of a directed graph $E$ is a  finitely aligned left-cancellative small category.
	All $k$-graphs are left cancellative: if $\Lambda$ is a $k$-graph and $\mu,\nu, \lambda \in \Lambda$ satisfy $\mu\lambda = \nu\lambda =: \gamma$, then in particular
	$d(\mu) + d(\lambda) = d(\gamma) = d(\nu) + d(\lambda)$, so the uniqueness condition in the factorisation
	property for $\gamma$ implies that $\mu$ and $\nu$ are equal. For $k \ge 2$ some $k$-graphs are
	\emph{not} finitely aligned (\cite[Examples 3.1~and~5.2]{RS05}). However, all row-finite $k$-graphs are finitely aligned: if $\Lambda$ is a row-finite $k$-graph and $\mu,\nu \in \Lambda$, then the collection of \emph{minimal common extensions} of $\mu$ and $\nu$,
	\begin{equation}\label{eq:MCE}
		\MCE(\mu,\nu) \coloneqq \mu \Lambda \cap \nu \Lambda \cap \Lambda^{d(\mu) \vee d(\nu)}
	\end{equation}
	is finite and $\mu \Lambda \cap \nu \Lambda = \MCE(\mu,\nu) \Lambda$ by the factorisation property.
\end{example}

\begin{lem}\label[lem]{lem:sub_kgraph_fin_align}
	Fix $S \subseteq \{1,\ldots,k\}$ and let $\NN^{S} \coloneqq \{ n \in \NN^k \mid n_i = 0 \text{ for } i \not\in S\}$. Let $\Lambda$ be a finitely aligned $k$-graph. Then $\Lambda^{\NN^S}$ is finitely aligned.
\end{lem}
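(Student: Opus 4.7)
The plan is to reduce finite alignment of $\Lambda^{\NN^S}$ to that of $\Lambda$ via a direct identification of principal right ideals. The first step is the bookkeeping observation that for any $\mu \in \Lambda^{\NN^S}$ one has $\mu \Lambda^{\NN^S} = \mu \Lambda \cap \Lambda^{\NN^S}$: the forward inclusion is immediate, and for the reverse, if $\mu \alpha \in \Lambda^{\NN^S}$ with $\alpha \in \Lambda$, then functoriality of $d$ gives $d(\alpha) = d(\mu \alpha) - d(\mu) \in \NN^S - \NN^S$, and since both summands sit in $\NN^S$ and $d(\alpha) \in \NN^k$, necessarily $d(\alpha) \in \NN^S$, so $\alpha \in \Lambda^{\NN^S}$. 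Consequently, for $\mu, \nu \in \Lambda^{\NN^S}$,
\[
\mu \Lambda^{\NN^S} \cap \nu \Lambda^{\NN^S} = (\mu \Lambda \cap \nu \Lambda) \cap \Lambda^{\NN^S}.
\]

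Next I would invoke the finite alignment of $\Lambda$ in its standard $k$-graph form: the minimal common extension set $\MCE(\mu,\nu) \subseteq \Lambda^{d(\mu) \vee d(\nu)}$ defined in \labelcref{eq:MCE} is finite and satisfies $\mu \Lambda \cap \nu \Lambda = \MCE(\mu,\nu) \Lambda$ (this is the usual factorisation-property argument, cf.\ \cref{ex:k-graph}). Because $d(\mu), d(\nu) \in \NN^S$, the coordinatewise maximum $d(\mu) \vee d(\nu)$ also lies in $\NN^S$, so every $c \in \MCE(\mu,\nu)$ has $d(c) \in \NN^S$ and therefore $c \in \Lambda^{\NN^S}$.

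Combining these two observations,
\[
\mu \Lambda^{\NN^S} \cap \nu \Lambda^{\NN^S} = \bigcup_{c \in \MCE(\mu,\nu)} \bigl(c \Lambda \cap \Lambda^{\NN^S}\bigr) = \bigcup_{c \in \MCE(\mu,\nu)} c \Lambda^{\NN^S},
\]
where the second equality uses the first-step identity applied to each $c \in \MCE(\mu,\nu) \subseteq \Lambda^{\NN^S}$. Since $\MCE(\mu,\nu)$ is finite, this exhibits $\mu \Lambda^{\NN^S} \cap \nu \Lambda^{\NN^S}$ as a finite union of principal right ideals of $\Lambda^{\NN^S}$, establishing finite alignment.

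There is no real obstacle; the only point that needs a tiny bit of care is the first-step identity $\mu \Lambda^{\NN^S} = \mu \Lambda \cap \Lambda^{\NN^S}$, which is what makes $\Lambda^{\NN^S}$ behave well as a subcategory and which also ensures that the $\MCE$ set of $\Lambda$ serves directly as a generating set of common right multiples inside $\Lambda^{\NN^S}$ (rather than needing to be refined). Everything else is a direct transcription of the finite alignment hypothesis for $\Lambda$.
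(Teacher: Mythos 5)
Your proof is correct and follows the same route as the paper's (much terser) argument: both rest on the observation that $\MCE(\mu,\nu)$, computed in $\Lambda$, lies in $\Lambda^{\NN^S}$ and generates $\mu\Lambda^{\NN^S}\cap\nu\Lambda^{\NN^S}$ there. Your write-up usefully makes explicit the identity $\mu\Lambda^{\NN^S}=\mu\Lambda\cap\Lambda^{\NN^S}$, which the paper leaves implicit; the only point you gloss over slightly is that for a $k$-graph, finite alignment in the sense of \labelcref{eq:finitely_aligned} is equivalent to finiteness of each $\MCE(\mu,\nu)$ (not only in the row-finite case of \cref{ex:k-graph}), but this is standard and easily checked since every element of $\MCE(\mu,\nu)$ must already belong to any finite generating set for $\mu\Lambda\cap\nu\Lambda$ by the degree argument.
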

\begin{proof}
	Fix $\alpha,\beta \in \Lambda^{\NN^S}$. Then $\alpha \vee \beta $, as calculated in $\Lambda$, is a subset of $\Lambda^{\NN^S}$ and hence is equal to $\alpha \vee \beta$ as calculated in $\Lambda^{\NN^S}$.
\end{proof}

\begin{lem}\label[lem]{lem:groupoid_gives_finitely_aligned}
	Let $\Gg$ be a groupoid and let $\Cc$ be a finitely aligned left-cancellative small category such that $(\Gg,\Cc)$ is a matched pair. Then $\Cc \bowtie \Gg$ is left cancellative and finitely aligned.
\end{lem}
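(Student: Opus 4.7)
The plan is to verify left cancellativity and finite alignment of $\Cc \bowtie \Gg$ separately. The key observation underlying both parts is that because $\Gg$ is a groupoid, every element of $\Gg$ sitting inside the Zappa--Sz\'ep product is invertible, so the interesting data live in the $\Cc$-parts of the unique factorisations $x = x_\Cc x_\Gg$ provided by \cite[Proposition~3.13]{MS?}.

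For left cancellativity, I would suppose that $(cg)(c_1 g_1) = (cg)(c_2 g_2)$, expand using the Zappa--Sz\'ep composition formula, and apply uniqueness of factorisation to split the equation into $c(g \la c_1) = c(g \la c_2)$ in $\Cc$ and $(g \ra c_1) g_1 = (g \ra c_2) g_2$ in $\Gg$. Left cancellativity of $\Cc$ gives $g \la c_1 = g \la c_2$; the associativity axiom $(g g') \la c = g \la (g' \la c)$ combined with the identity $r(c) \la c = c$ shows that $g \la (\cdot) \colon s(g)\Cc \to r(g)\Cc$ is a bijection with inverse $g^{-1} \la (\cdot)$, so $c_1 = c_2$. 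The $\Gg$-equation then reduces to $(g \ra c_1) g_1 = (g \ra c_1) g_2$, and left cancellativity of the groupoid $\Gg$ yields $g_1 = g_2$.

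For finite alignment, the central step is to show that $(cg)(\Cc \bowtie \Gg) = c(\Cc \bowtie \Gg)$ for every $cg \in \Cc \bowtie \Gg$. This reduces, via the composition formula and the identity axioms $g \la s(g) = r(g)$ and $g \ra s(g) = g$ (the symmetric analogue of the left identity axiom), to the short calculation $cg \cdot g^{-1} = c$. Granted this, it suffices to decompose $c_1(\Cc \bowtie \Gg) \cap c_2(\Cc \bowtie \Gg)$ for $c_1, c_2 \in \Cc$. Unpacking the composition formula shows that an element $x \in \Cc \bowtie \Gg$ lies in $c_i(\Cc \bowtie \Gg)$ if and only if $x_\Cc \in c_i \Cc$. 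Since $\Cc$ is finitely aligned, $c_1 \Cc \cap c_2 \Cc = \bigcup_{c \in c_1 \vee c_2} c \Cc$, and recombining with the $\Gg$-part gives
\[
c_1(\Cc \bowtie \Gg) \cap c_2(\Cc \bowtie \Gg) = \bigcup_{c \in c_1 \vee c_2} c(\Cc \bowtie \Gg),
\]
a finite union of principal right ideals. The only real care required will be tracking the various matched-pair axioms in the right order; once these identities are in hand, the argument is routine bookkeeping.
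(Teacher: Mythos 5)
Your proposal is correct and follows essentially the same route as the paper: both reduce finite alignment to the identity $cg(\Cc\bowtie\Gg)=c(\Cc\bowtie\Gg)$ and then transport $c_1\Cc\cap c_2\Cc = F\Cc$ through the unique factorisation, and both derive left cancellativity from the fact that each $g\la(\cdot)$ is bijective (the paper simply outsources this to \cite[Lemma~3.15]{MS?} rather than verifying it inline as you do).
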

\begin{proof}
	The left-cancellativity of $\Cc \bowtie \Gg$ follows from {\cite[Lemma~3.15]{MS?}} since every element of $\Gg$ is invertible.
	
	For finite alignment let $cg \in \Cc \bowtie \Gg$. Since $g$ is invertible we have $cg(\Cc \bowtie \Gg) = c(\Cc \bowtie \Gg)$. Fix $c_1,c_2 \in \Cc$ and suppose that $c_1 (\Cc \bowtie \Gg) \cap c_2 (\Cc \bowtie \Gg) \ne \varnothing$. Fix $c_1c_1'g_1 = c_2 c_2'g_2 \in \Cc \bowtie \Gg$.
	Uniqueness of factorisation gives $c_1c_1' = c_2 c_2'$, so $c_1 \Cc \cap c_2 \Cc \ne \varnothing$. Since $\Cc$ is finitely aligned, there are a finite subset $F \subseteq \Cc$ and $c \in F$ such that $c_1c_1' = c_2 c_2' = c c'$ for some $c' \in \Cc$. Hence, $cc'g_1 = cc'g_2$ and it follows that $c_1 (\Cc \bowtie \Gg) \cap c_2 (\Cc \bowtie \Gg) = \bigcup_{c \in F} c (\Cc \bowtie \Gg)$.
\end{proof}

\subsection{Twisted \texorpdfstring{\boldmath$C^*$}{C*}-algebras}
To introduce twisted $C^*$-algebras of small categories we need to recall the notion of a $\TT$-valued $2$-cocycle.
\begin{dfn}\label[dfn]{dfn:2-cocycle}
	A \emph{$\TT$-valued $2$-cocycle} on a category $\Cc$ is a map $\sigma \colon \Cc^2 \to \TT$ such that for every $(c_1,c_2,c_3) \in \Cc^3$,
	\[
	\sigma(c_2,c_3)\sigma(c_1,c_2c_3) = \sigma(c_1,c_2)\sigma(c_1c_2,c_3),
	\]
	and such that $\sigma(r(c),c) = 1 = \sigma(c,s(c))$ for all $c \in \Cc$.
\end{dfn}
\begin{rmk}\label[rmk]{rmk:coycles normalised}
	The condition $\sigma(r(c),c) = 1 = \sigma(c,s(c))$ for all $c \in \Cc$ is often emphasised by saying that $\sigma$ is \emph{normalised}, but in this paper all cocycles are normalised, so we drop the adjective.
\end{rmk}

Given a finite family $\Pp$ of pairwise commuting projections in a $C^*$-algebra $A$, we write $\bigvee \Pp$ for the smallest projection in $A$ that dominates every element of $\Pp$. Explicitly,

\begin{equation}\label{eq:sum_product_formula}
	\bigvee \Pp = \sum_{\varnothing \ne F \subseteq \Pp} (-1)^{|F|-1} \prod_{P\in F} P.
\end{equation}
We take the convention that $\bigvee \varnothing = 0$.

\begin{dfn}[cf.~{\cite[Definition~3.1]{BKQS18}}]
	\label[dfn]{dfn:twisted_lcsc_algebra}
	Let $\Cc$ be a finitely aligned left-cancellative small category and let $\sigma$ be a $\TT$-valued $2$-cocycle on $\Cc$. A \emph{$\sigma$-twisted representation} of $\Cc$ in a $C^*$-algebra $A$ is a map $S \colon \Cc \to A$, $c \mapsto S_c$ such that each $S_c$ is a partial isometry and
	\begin{enumerate}[labelindent=0pt,labelwidth=\widthof{\ref{itm:cf-mult}},label=(R\arabic*), ref=(R\arabic*),leftmargin=!]
		\item \label{itm:cf-mult} $S_{c_1}S_{c_2} = \delta_{s(c_1), r(c_2)} \sigma(c_1,c_2) S_{c_1c_2}$ for all $c_1,c_2 \in \Cc$,
		\item \label{itm:cf-source} $S_c^* S_{c} = S_{s(c)}$ for all $c \in \Cc$, and
		\item \label{itm:cf-range} $\displaystyle S_{c_1}S_{c_1}^* S_{c_2}S_{c_2}^* = \bigvee_{c \in F} S_{c}S_{c}^*$ for all $c_1,c_2 \in \Cc$ and any finite independent set $F$ satisfying $c_1 \Cc \cap c_2 \Cc = F \Cc$, with the convention that if $c_1 \Cc \cap c_2 \Cc = \varnothing$, then $F = \varnothing$ is such a finite independent set, so $S_{c_1}S_{c_1}^* S_{c_2}S_{c_2}^* = 0$.
	\end{enumerate}
	We say that $S$ is \emph{covariant} if, in addition, for all $v \in \Cc^0$,
	\begin{enumerate}[labelindent=0pt,labelwidth=\widthof{\ref{itm:cf-mult}},label=(R\arabic*), ref=(R\arabic*),leftmargin=!]
		\setcounter{enumi}{3}
		\item \label{itm:cf-ck}
		$\displaystyle S_v = \bigvee_{c \in F}  S_{c}S_{c}^*$ for all finite exhaustive $F \subseteq v\Cc$.
	\end{enumerate}
\end{dfn}

\begin{rmk}\label[rmk]{rmk:independence_not_needed}
	Suppose that $\Cc$ is a finitely aligned left-cancellative small category and $\sigma$ is a $\TT$-valued $2$-cocycle on $\Cc$. Let $c_1,c_2 \in \Cc$ and let $F$ be a finite set generating $c_1 \Cc \cap c_2 \Cc$. As discussed immediately following \labelcref{eq:finitely_aligned} there is an independent subset $G$ of $F$ that generates $c_1 \Cc \cap c_2 \Cc$. If $c \in F$ then there exists $c' \in G$ such that $c \in c'\Cc$ and hence $S_cS_c^* \le S_{c'}S_{c'}^*$ by \labelcref{itm:cf-mult}. Hence, $\bigvee_{c \in F} S_{c}S_{c}^* = \bigvee_{c \in G} S_{c}S_{c}^* = S_{c_1}S_{c_1}^* S_{c_2}S_{c_2}^*$ by \labelcref{itm:cf-range}.
\end{rmk}

We consider two twisted $C^*$-algebras associated to each $2$-cocycle on a finitely aligned left-cancellative small category. A standard argument following the lines of \cite[Proposition~7.4]{MS?} or an appeal to \cite{Lor10} establishes that such $C^*$-algebras exist.

\begin{dfn}\label[dfn]{dfn:univ_twisted_lcsc_alg}
	Let $\Cc$ be a finitely aligned left-cancellative small category, and let $\sigma$ be a $2$-cocycle on $\Cc$. The \emph{$\sigma$-twisted Toeplitz algebra of $\Cc$} is the $C^*$-algebra $\Tt C^*(\Cc,\sigma)$ generated by a universal $\sigma$-twisted representation $t \colon \Cc \to \Tt C^*(\Cc,\sigma)$; that is, $\Tt C^*(\Cc,\sigma) = C^*(\{t_c \mid c \in \Cc\})$, and for any $\sigma$-twisted representation $S \colon \Cc \to A$ there is a unique $*$-homomorphism $\Phi \colon \Tt C^*(\Cc,\sigma) \to A$ such that $S = \Phi \circ t$.
	
	The \emph{$\sigma$-twisted $C^*$-algebra of $\Cc$} is the $C^*$-algebra $C^*(\Cc,\sigma)$ generated by a universal $\sigma$-twisted covariant representation $s \colon \Cc \to C^*(\Cc,\sigma)$. Let $I$ be the ideal of $\Tt C^*(\Cc,\sigma)$ generated by $\{ S_v - \bigvee_{c \in F}  S_{c}S_{c}^* \mid v \in \Cc^0, F \subseteq v\Cc \text{ is finite exhaustive}\}$. Then $C^*(\Cc,\sigma) \cong \Tt C^*(\Cc,\sigma)/ I$.
\end{dfn}
The ``untwisted'' algebra  $C^*(\Cc,1)$ of a countable finitely aligned left-cancellative small category coincides with the groupoid $C^*$-algebra $C^*(G_2|_{\partial \Lambda})$  of \cite[Theorem~10.15]{Spi20}.

\begin{rmk}[Notational conventions for representations of categories]
	There are numerous different representations of a category $\Cc$ appearing in this paper, so, taking on board the comments of a helpful anonymous referee, we have tried to adopt a consistent convention. We denote generic families of partial isometries indexed by elements of the category $\Cc$ and satisfying \labelcref{itm:cf-source}~and~\labelcref{itm:cf-range} and some analogue of~\labelcref{itm:cf-mult} by $S$ throughout. But for universal families we use lower-case letters, and different letters and fonts as follows: $t$ is for families twisted by a cocycle as in~\labelcref{itm:cf-mult} that do not (necessarily) satisfy~\labelcref{itm:cf-ck}; $s$ is for those that are required to satisfy~\labelcref{itm:cf-ck}. When we introduce universal representations twisted by a continuous family of cocycles in \cref{sec:cocycle_homotopy}, we use fraktur font ($\mathfrak{t}$ and $\mathfrak{s}$) to distinguish them.
\end{rmk}

Let $\Mm(A)$ denote the multiplier algebra of a $C^*$-algebra $A$, and let $\Uu\Zz \Mm(A)$ denote the group of unitaries in the center of the multiplier algebra. 
The following is a generalisation of \cite[Lemma~3.4]{BKQS18}.
\begin{lem}\label[lem]{lem:twisted_category_algebra_identities}
	Let $\Cc$ be a finitely aligned left-cancellative small category and let $A$ be a $C^*$-algebra. Suppose that $S \colon \Cc \to A$ satisfies \labelcref{itm:cf-source} of \cref{dfn:twisted_lcsc_algebra}, and that $\omega \colon \Cc^2 \to \Uu\Zz \Mm(A)$ satisfies $S_{c_1}S_{c_2} = \omega(c_1,c_2)S_{c_1 c_2}$ for all $(c_1,c_2) \in \Cc^2$. Then
	\begin{enumerate}
		\item \label{itm:identities_0} 
		$S_{r(c)} S_c = S_c = S_c S_{s(c)}$ for all $c \in \Cc$;
		\item \label{itm:identities_1} for every invertible $c \in \Cc$ we have $S_{c^{-1}} ={\omega(c,c^{-1})} S_{c}^{*}$ and $S_{c}S_{c}^* = S_{r(c)}$;
		\item \label{itm:identities_2} if $a \sim b$ in $\Cc$ then $S_a S_a ^* = S_b S_b^*$; and
		\item \label{itm:identities_3} if $A, B \subseteq \Cc$ satisfy $A \sim B$, then
		$
		\bigvee_{a \in A} S_aS_a^* = \bigvee_{b \in B} S_b S_b^*.
		$
	\end{enumerate}
\end{lem}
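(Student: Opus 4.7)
The main subtlety is that the hypotheses do not include any normalisation condition on $\omega$, so for part~(i) one must extract the equalities $S_{r(c)} S_c = S_c$ and $S_c S_{s(c)} = S_c$ from the data provided. The key observation is that for any $v \in \Cc^0$, condition~\labelcref{itm:cf-source} reads $S_v^* S_v = S_{s(v)} = S_v$; since the left-hand side is self-adjoint, so is $S_v$, and then $S_v^2 = S_v^* S_v = S_v$ makes $S_v$ a projection. Using $S_{r(c)}^2 = S_{r(c)}$, I would compute $S_{r(c)}^2 S_c$ in two ways: directly it equals $S_{r(c)} S_c = \omega(r(c),c) S_c$, while associating the other way gives $S_{r(c)}\, \omega(r(c),c) S_c = \omega(r(c),c)^2 S_c$ by centrality of $\omega(r(c),c)$. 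Cancelling one copy of the invertible element $\omega(r(c),c)$ yields $\omega(r(c),c) S_c = S_c$, hence $S_{r(c)} S_c = S_c$; the identity $S_c S_{s(c)} = S_c$ is symmetric.

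With~(i) in hand, for~(ii) I would compute $S_c^* S_c S_{c^{-1}}$ in two ways for an invertible $c$. Since $s(c) = r(c^{-1})$, \labelcref{itm:cf-source} and part~(i) give $S_c^* S_c S_{c^{-1}} = S_{s(c)} S_{c^{-1}} = S_{c^{-1}}$. On the other hand, $S_c S_{c^{-1}} = \omega(c,c^{-1}) S_{r(c)}$, so the same product equals $\omega(c,c^{-1}) S_c^* S_{r(c)} = \omega(c,c^{-1}) S_c^*$, using that $S_c^* S_{r(c)} = (S_{r(c)} S_c)^* = S_c^*$ by~(i). Comparing gives $S_{c^{-1}} = \omega(c,c^{-1}) S_c^*$; substituting this into $S_c S_{c^{-1}} = \omega(c,c^{-1}) S_{r(c)}$ and cancelling the central unitary yields $S_c S_c^* = S_{r(c)}$.

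Part~(iii) is then a short calculation: if $a \sim b$ via an invertible $u \in \Cc$ with $a = bu$, then $S_a = \overline{\omega(b,u)}\, S_b S_u$, and centrality of the $\TT$-valued factor gives $S_a S_a^* = S_b S_u S_u^* S_b^*$. Applying~(ii) converts $S_u S_u^*$ into $S_{r(u)} = S_{s(b)}$, and then~(i) collapses $S_b S_{s(b)} S_b^*$ to $S_b S_b^*$. Part~(iv) follows immediately from~(iii): the relation $A \sim B$ means each $a \in A$ has a $\sim$-partner in $B$ and vice versa, so by~(iii) the sets of projections $\{S_a S_a^* : a \in A\}$ and $\{S_b S_b^* : b \in B\}$ coincide, and their joins therefore agree. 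The only substantive obstacle is the normalisation step in~(i); once that is secured, the remaining parts are bookkeeping.
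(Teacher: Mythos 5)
Your proposal is correct and follows essentially the same route as the paper: establish that the vertex elements $S_v$ are projections, use centrality and invertibility of $\omega$ to absorb the normalisation factors and obtain (i), then derive (ii)--(iv) by the same computations. The only cosmetic slip is calling $\omega(b,u)$ a ``$\TT$-valued factor'' in (iii) --- it is a central unitary in $\Mm(A)$, not necessarily a scalar --- but centrality and unitarity are all you actually use, so the argument stands.
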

\begin{proof}
	\labelcref{itm:identities_0} First note that for $v \in \Cc^0$, the condition~\labelcref{itm:cf-source} implies first that $S_v$ is self-adjoint, and then that $S_v$ is idempotent, hence a projection. Consequently, for $c \in \Cc$, we have 
	$S_c = \omega(r(c),c)^* S_{r(c)}S_c = \omega(r(c),c)^* S_{r(c)}^2 S_c = S_{r(c)} S_c$. A symmetric argument gives $S_c = S_c S_{s(c)}$.

	\labelcref{itm:identities_1} Suppose that $c \in \Cc$ is invertible. Then
	\begin{align*}
		S_{c^{-1}}
		&= S_{s(c)c^{-1}}
		= S_c^* S_c S_{c^{-1}} \\
		&= \omega(c,c^{-1}) S_c^* S_{r(c)}
		= \omega(c,c^{-1}) (S_{r(c)} S_c)^*
		= \omega(c,c^{-1}) S_c^*.
	\end{align*}
	It follows that
	\[
	S_c S_c^* = {\omega(c,c^{-1})}^* S_c S_{c^{-1}} =  {\omega(c,c^{-1})}^* \omega(c,c^{-1}) S_{r(c)} = S_{r(c)}.
	\]
	\labelcref{itm:identities_2} Suppose that $a \sim b$. Then there exists an invertible $c \in \Cc$ such that $a = bc$. Now,
	\[
	S_a S_a^* = S_{bc} S_{bc}^* = {\omega(b,c)}^* {\omega(b,c)} S_b S_c S_c^* S_b^* = S_b S_{r(c)} S_b^* = S_b S_b^*.
	\]
	\labelcref{itm:identities_3} This follows immediately from \labelcref{itm:identities_2}.
\end{proof}
\begin{example}
	Let $\Gg$ be a discrete groupoid. Then \cref{lem:twisted_category_algebra_identities} implies that if $S \colon \Gg \to A$ satisfies \labelcref{itm:cf-mult} and \labelcref{itm:cf-source}, then $S_g$ is a partial unitary for all $g \in \Gg$, and $S$ is a $\sigma$-twisted unitary representation of $S_g$ as in \cite{Renault80}. In particular, $S$ automatically satisfies \labelcref{itm:cf-range} and \labelcref{itm:cf-ck} and the $\sigma$-twisted $C^*$-algebra of $\Gg$ of \cref{dfn:univ_twisted_lcsc_alg} is the usual $\sigma$-twisted groupoid $C^*$-algebra.
\end{example}

Our main motivating example is the twisted $C^*$-algebras of self-similar actions of groupoids $\Gg$ on row-finite $k$-graphs $\Lambda$ with no sources introduced in \cite{MS?}. So we show that the preceding definition, in the situation where $\Cc = \Lambda \bowtie \Gg$ is the Zappa--Sz\'ep product coming from a self-similar action, is equivalent to the definition of a Cuntz--Krieger $(\Gg,\Lambda; \sigma)$-family in the sense of \cite[Definition~7.1]{MS?}. 

With $\MCE(\mu,\nu)$ as in \labelcref{eq:MCE}, recall from \cite[Definition~7.1]{MS?} that a \emph{Toeplitz--Cuntz--Krieger $(\Gg, \Lambda; \sigma)$-family} in a $C^*$-algebra $A$ is a map $S \colon \Lambda \bowtie \Gg \to A$ that satisfies \labelcref{itm:cf-mult}~and~\labelcref{itm:cf-source} of \cref{dfn:twisted_lcsc_algebra} and such that
\begin{enumerate}[labelindent=0pt,labelwidth=\widthof{\ref{itm:TCK3}},label=(TCK\arabic*), ref=(TCK\arabic*),leftmargin=!]\setcounter{enumi}{2}
	\item\label{itm:TCK3} for all $\mu,\nu \in \Lambda$, we have $S_\mu S^*_\mu S_\nu S^*_\nu = \sum_{\lambda \in \MCE(\mu,\nu)} S_\lambda S^*_\lambda$.
\end{enumerate}
The map $S$ is called a \emph{Cuntz--Krieger $(\Gg, \Lambda; \sigma)$-family} if it satisfies
\begin{enumerate}[labelindent=0pt,labelwidth=\widthof{\ref{itm:CK}},label=(CK), ref=(CK),leftmargin=!]
	\item\label{itm:CK} $S_v = \sum_{\lambda \in v\Lambda^n} S_\lambda S^*_\lambda$ for all $v \in \Lambda^0$ and $n \in \NN^k$.
\end{enumerate}
We prove that these last two conditions are equivalent to \labelcref{itm:cf-range}~and~\labelcref{itm:cf-ck} under a slightly less restrictive requirement than~\labelcref{itm:cf-mult}---we will use the more general form to deal with representations of $\Lambda \bowtie \Gg$ twisted by continuous families of cocycles later in \cref{sec:cocycle_homotopy}.

\begin{prop}\label[prop]{prop:relation_relationships}
	Let $(\Gg, \Lambda)$ be a self-similar action of a groupoid $\Gg$ on a row-finite $k$-graph $\Lambda$ with no sources, let $\Cc \coloneqq \Lambda \bowtie \Gg$ be the associated Zappa--Sz\'ep product category. For $\mu,\nu \in \Lambda$ we have
	\[
	\mu \Cc \cap \nu \Cc = \MCE(\mu,\nu)\Cc.
	\]
	Let $A$ be a $C^*$-algebra, and suppose that $S \colon \Cc \to A$ satisfies \labelcref{itm:cf-source} of \cref{dfn:twisted_lcsc_algebra}, and that $\omega \colon \Cc^2 \to \Uu\Zz \Mm(A)$ satisfies $S_{c_1}S_{c_2} = \omega(c_1,c_2)S_{c_1 c_2}$ for all $(c_1,c_2) \in \Cc^2$. Then
	\begin{enumerate}\setcounter{enumi}{0}
		\item\label{itm:relrel-iii}  $S$ satisfies~\labelcref{itm:TCK3} if and only if it satisfies~\labelcref{itm:cf-range}; and
		\item\label{itm:relrel-iv} $S$ satisfies~\labelcref{itm:CK} and if and only if it satisfies both \labelcref{itm:cf-range} and \labelcref{itm:cf-ck}.
	\end{enumerate}
\end{prop}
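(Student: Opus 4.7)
The plan is to leverage the factorisation $c = \mu g$ with $\mu \in \Lambda$ and $g \in \Gg$ together with \cref{lem:twisted_category_algebra_identities}\labelcref{itm:identities_1}, which forces $S_g S_g^* = S_{r(g)}$ and hence $S_c S_c^* = S_\mu S_\mu^*$ (the cocycle scalars $\omega$ being central and unitary cancel out). Combined with the invertibility of $g$, which gives $c\Cc = \mu\Cc$, this reduces \labelcref{itm:cf-range} and \labelcref{itm:cf-ck} for $\Cc$ to statements involving only $\Lambda$. The first step is the preliminary identity $\mu\Cc \cap \nu\Cc = \MCE(\mu,\nu)\Cc$: the $\supseteq$ inclusion is immediate, and for $\subseteq$, any element $\mu\alpha g = \nu\beta h$ satisfies $\mu\alpha = \nu\beta$ by uniqueness of factorisation in the Zappa--Sz\'ep product, so $\mu\alpha \in \mu\Lambda \cap \nu\Lambda = \MCE(\mu,\nu)\Lambda$ by the standard $\MCE$ identity for row-finite $k$-graphs. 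A similar factorisation argument shows that $\MCE(\mu,\nu)$ is independent in $\Cc$ and that for distinct $\lambda,\lambda' \in \MCE(\mu,\nu)$ we have $\lambda\Lambda \cap \lambda'\Lambda = \varnothing$ (and hence $\lambda\Cc \cap \lambda'\Cc = \varnothing$).

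For part~\labelcref{itm:relrel-iii}, the implication \labelcref{itm:cf-range}$\Rightarrow$\labelcref{itm:TCK3} follows by applying \labelcref{itm:cf-range} to $c_1 = \mu$, $c_2 = \nu$ with the independent set $F = \MCE(\mu,\nu)$ and then collapsing the resulting join to a sum using orthogonality of the MCE projections (itself a consequence of \labelcref{itm:cf-range} applied to any two distinct elements of $\MCE(\mu,\nu)$). Conversely, for \labelcref{itm:TCK3}$\Rightarrow$\labelcref{itm:cf-range}, write $c_i = \mu_i g_i$ so that $S_{c_i}S_{c_i}^* = S_{\mu_i}S_{\mu_i}^*$ and $c_1\Cc \cap c_2\Cc = \MCE(\mu_1,\mu_2)\Cc$; apply \labelcref{itm:TCK3} to rewrite $S_{\mu_1}S_{\mu_1}^*S_{\mu_2}S_{\mu_2}^*$ as a sum over $\MCE(\mu_1,\mu_2)$, and invoke \cref{lem:independent_unions_equal} to see that any independent generating set $F$ satisfies $F \sim \MCE(\mu_1,\mu_2)$; then \cref{lem:twisted_category_algebra_identities}\labelcref{itm:identities_3} identifies the two joins.

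For part~\labelcref{itm:relrel-iv}, I would first observe that for each $n \in \NN^k$ the set $v\Lambda^n$ is a finite exhaustive subset of $v\Cc$: row-finiteness gives finiteness, and the no-sources hypothesis lets one extend any $\mu \in v\Lambda$ to degree $d(\mu) \vee n$ and refactor to produce a common extension with some $\lambda \in v\Lambda^n$. The direction \labelcref{itm:cf-range}$+$\labelcref{itm:cf-ck}$\Rightarrow$\labelcref{itm:CK} then follows by applying \labelcref{itm:cf-ck} to $F = v\Lambda^n$ and using \labelcref{itm:cf-range} (equivalently \labelcref{itm:TCK3}) to replace the resulting join by a sum of pairwise orthogonal projections. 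For the converse, to recover \labelcref{itm:cf-range} I would expand $S_\mu S_\mu^* = \sum_{\alpha \in s(\mu)\Lambda^{p-d(\mu)}} S_{\mu\alpha}S_{\mu\alpha}^*$ (with $p = d(\mu)\vee d(\nu)$) using \labelcref{itm:CK}, noting again that the $\omega$-scalars cancel in each rank-one projection; sandwiching $S_v = \sum_{\gamma \in v\Lambda^p} S_\gamma S_\gamma^*$ between $S_\gamma^*$ and $S_\gamma$ yields mutual orthogonality of $\{S_\gamma S_\gamma^* : \gamma \in v\Lambda^p\}$, whence $S_\mu S_\mu^* S_\nu S_\nu^*$ restricts to the sum over $\MCE(\mu,\nu)$, giving \labelcref{itm:TCK3} and hence \labelcref{itm:cf-range} via part~\labelcref{itm:relrel-iii}. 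To recover \labelcref{itm:cf-ck}, given a finite exhaustive $F \subseteq v\Cc$ with $f = \mu_f g_f$, set $N = \bigvee_{f \in F} d(\mu_f)$; \labelcref{itm:CK} gives $S_v = \sum_{\lambda \in v\Lambda^N} S_\lambda S_\lambda^*$, and exhaustiveness together with $d(\mu_f) \le N$ forces each $\lambda$ to lie in some $\mu_f \Lambda$ (by the $k$-graph factorisation property), so each $S_\lambda S_\lambda^*$ is dominated by $S_{\mu_f}S_{\mu_f}^* = S_f S_f^*$. The reverse inequality $\bigvee_{f \in F} S_f S_f^* \le S_v$ is immediate from $S_f S_f^* \le S_{r(f)} = S_v$.

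The most delicate step is recovering \labelcref{itm:cf-ck} from \labelcref{itm:CK}: one must pass from a general finite exhaustive subset of $v\Cc$ (not $v\Lambda$) to a controlled set of paths $v\Lambda^N$ of suitable degree, and then use exhaustiveness plus the $k$-graph factorisation property to show that each path in $v\Lambda^N$ is dominated by some projection coming from $F$. Tracking the cocycle scalars $\omega$ throughout---and verifying that they cancel in every rank-one projection of the form $S_c S_c^*$---is the other significant bookkeeping task, but one handled uniformly by centrality of $\omega$ and \cref{lem:twisted_category_algebra_identities}\labelcref{itm:identities_1}.
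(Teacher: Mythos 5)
Your proposal is correct and follows the paper's strategy in all essentials: the same preliminary computation of $\mu\Cc\cap\nu\Cc=\MCE(\mu,\nu)\Cc$ via uniqueness of factorisation, the same reduction of everything to $\Lambda$ through $c\Cc=\mu\Cc$ and $S_cS_c^*=S_\mu S_\mu^*$, and the same orthogonality argument converting joins over $\MCE(\mu,\nu)$ into sums. The one place you genuinely diverge is the converse of part~\labelcref{itm:relrel-iv}: the paper outsources the recovery of \labelcref{itm:cf-ck} to the argument of \cite[Lemma~B.4]{RSY04} (after first showing that $G=\{\mu_f\}$ is exhaustive in $\Lambda$) and the recovery of \labelcref{itm:cf-range} to the argument of \cite[Lemma~7.2]{KPS15}, whereas you argue both directly --- sandwiching $S_v=\sum_{\gamma\in v\Lambda^p}S_\gamma S_\gamma^*$ to get mutual orthogonality and hence~\labelcref{itm:TCK3}, and using exhaustiveness of $F$ in $v\Cc$ at level $N=\bigvee_f d(\mu_f)$ to see that every $\lambda\in v\Lambda^N$ lies in some $\mu_f\Lambda$, so that $S_v=\sum_\lambda S_\lambda S_\lambda^*\le\bigvee_f S_fS_f^*\le S_v$. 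Your version is more self-contained and exploits the no-sources hypothesis to avoid the locally convex machinery of the cited lemmas; the paper's version is shorter on the page but leans on external results. A minor further variation: in the converse of part~\labelcref{itm:relrel-iii} you identify $F\sim\MCE(\mu_1,\mu_2)$ via \cref{lem:independent_unions_equal} and then apply \cref{lem:twisted_category_algebra_identities}\labelcref{itm:identities_3}, where the paper shows mutual domination of the two joins directly; both are valid, and yours is arguably the cleaner appeal to the lemmas already established.
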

\begin{proof}
	
	Fix $\mu,\nu \in \Lambda$. By \labelcref{eq:MCE} we have $\MCE(\mu,\nu) \subseteq r(\mu)\Lambda^{d(\mu) \vee d(\nu)}$. This is a finite set because $\Lambda$ is row-finite.
	Fix $c \in \mu \Cc \cap \nu \Cc$. We can write $c = \mu\mu'g = \nu\nu' h$ for some $\mu', \nu' \in \Lambda$ and $g, h \in \Gg$. Uniqueness of factorisations in $\Cc = \Lambda \bowtie \Gg$ implies that $\mu\mu' = \nu\nu'$. In particular, $d(\mu\mu') = d(\nu\nu') \ge d(\mu) \vee d(\nu)$. By the factorisation property in $\Lambda$, we can write $\mu' = \mu''\tau$ and $\nu' = \nu''\rho$ with $d(\mu'') = (d(\mu) \vee d(\nu)) - d(\mu)$ and $d(\nu'') = (d(\mu) \vee d(\nu)) - d(\nu)$, and then $\mu\mu'' \tau = \nu\nu'' \rho$ with $d(\mu\mu'') = d(\nu\nu'') = d(\mu)\vee d(\nu)$. Uniqueness of factorisations in $\Lambda$ gives $\mu\mu'' = \nu\nu'' \in \MCE(\mu,\nu)$. Hence $c = \mu\mu'g = \mu\mu'' (\tau g) \in \MCE(\mu,\nu)\Cc$. That is, $\mu \Cc \cap \nu \Cc = \MCE(\mu,\nu)\Cc$.

	For~\labelcref{itm:relrel-iii}, first suppose that $S$ satisfies~\labelcref{itm:cf-range}.
	By \labelcref{itm:cf-range}, \cref{rmk:independence_not_needed}, and the first paragraph, we have $S_\mu S^*_\mu S_\nu S_\nu^* = \bigvee_{\lambda \in \MCE(\mu,\nu)} S_\lambda S^*_\lambda$. If $\eta,\zeta \in \MCE(\mu,\nu)$ are distinct, then $d(\eta) = d(\zeta) = d(\mu) \vee d(\nu)$, and so the factorisation property in $\Lambda$ implies that $\eta\Lambda \cap \zeta \Lambda = \varnothing$. Hence~\labelcref{itm:cf-range} implies that $S_\eta S^*_\eta S_\zeta S^*_\zeta = 0$; that is, the projections $S_\lambda S^*_\lambda$, $\lambda \in \MCE(\mu,\nu)$ are mutually orthogonal, so that $\bigvee_{\lambda \in \MCE(\mu,\nu)} S_\lambda S^*_\lambda = \sum_{\lambda \in \MCE(\mu,\nu)} S_\lambda S^*_\lambda$. We deduce that $S$ satisfies~\labelcref{itm:TCK3}. 
	
	Now suppose that $S$ satisfies~\labelcref{itm:TCK3}. Let $c = \mu g \in \Cc$ with $\mu \in \Lambda$ and $g \in \Gg$. Using \cref{lem:twisted_category_algebra_identities} and that $\omega$ takes values in $\Uu\Zz\Mm(A)$, we see that
	\begin{align}\label{eq:principle_ideals}
		\begin{split}
			c \,\Cc &= \mu g\,\Cc \subseteq \mu \,\Cc = \mu g g^{-1} \,\Cc \subseteq c\, \Cc  \quad \text{and} \\
			S_c S^*_c &= \omega(\mu,g)^*\omega(\mu,g) S_\mu S_g S^*_g S^*_\mu = S_\mu S^*_\mu.
		\end{split}
	\end{align}
	
	Fix $c_1, c_2 \in \Cc$ and a finite independent set $F$ such that $c_1 \Cc \cap c_2 \Cc = F\Cc$. Write $c_1 = \mu g$ and $c_2 = \nu h$ with $\mu,\nu \in \Lambda$ and $g,h \in \Gg$, and for each $c \in F$ write $c = \lambda_c g_c$ with $\lambda_c \in \Lambda$ and $g_c \in \Gg$. Let $G \coloneqq \{\lambda_c : c \in F\}$. By the first part of~\labelcref{eq:principle_ideals} we have $\mu_c \Cc \cap \nu_c \Cc = c_1 \Cc \cap c_2 \Cc = F \Cc = G\Cc$, and by the second part we have $S_{c_1} S^*_{c_1} S_{c_2} S^*_{c_2} = S_\mu S^*_\mu S_\nu S^*_\nu$ and $\bigvee_{c \in F} S_c S^*_c = \bigvee_{\lambda \in G} S_\lambda S^*_\lambda$. So it suffices to show that $S_\mu S^*_\mu S_\nu S^*_\nu = \bigvee_{\lambda \in G} S_\lambda S^*_\lambda$.

	By~\labelcref{itm:TCK3}, we have $S_{\mu} S^*_\mu S_\nu S^*_\nu = \sum_{\lambda \in \MCE(\mu,\nu)} S_\lambda S^*_\lambda$.  By the same argument as the second paragraph, for distinct $\lambda,\lambda' \in \MCE(\mu,\nu)$ we have  $\lambda \Lambda \cap \lambda' \Lambda = \varnothing$, so $\MCE(\lambda,\lambda') = \varnothing$. By~\labelcref{itm:TCK3}, we have $S_{\lambda}S_{\lambda}^*S_{\lambda'}S_{\lambda'}^* = 0 = S_{\lambda'}S_{\lambda'}^*S_{\lambda}S_{\lambda}^*$, so $S_{\mu} S^*_\mu S_\nu S^*_\nu = \bigvee_{\lambda \in \MCE(\mu,\nu)} S_\lambda S^*_\lambda$. 
	By uniqueness of factorisation, $\MCE(\mu,\nu) \Lambda = \mu \Lambda \cap \nu \Lambda = G \Lambda$. So for each $\lambda \in G$ there exists $\lambda' \in \MCE(\mu,\nu)$ such that $\lambda \in \lambda' \Lambda$, say $\lambda = \lambda'\tau$, and then $S_{\lambda} S^*_{\lambda} = S_{\lambda'} S_\tau S^*_\tau S^*_{\lambda'} \le S_{\lambda'} S^*_{\lambda'}$. Hence $\bigvee_{\lambda \in G} S_\lambda S^*_\lambda \le \bigvee_{\lambda' \in \MCE(\mu,\nu)} S_{\lambda'} S^*_{\lambda'}$. The same argument gives the reverse inequality, and we deduce that the two are equal. So $S_{\mu}S^*_\mu S_\nu S^*_\nu = \bigvee_{\lambda \in G} S_\lambda S^*_\lambda$ as required.
	
	For~\labelcref{itm:relrel-iv}, first suppose that $S$ satisfies~\labelcref{itm:cf-range} and~\labelcref{itm:cf-ck}. Fix $v \in \Lambda^0$ and $n \in \NN^k$. Fix $c \in v\Cc$ and factorise $c = \lambda g$ with $\lambda \in \Lambda$ and $g \in \Gg$. Since $\Lambda$ is row-finite with no sources, $s(g)\Lambda^n$ is finite and nonempty, say $\tau \in s(g)\Lambda^n$. We have $\lambda g \tau = \lambda (g\rhd \tau)(g\lhd\tau)$. Since self-similar actions are by definition degree preserving, we have $d(\lambda (g\rhd \tau)) = d(\lambda) + n \ge n$, so by the factorisation property, we can write $\lambda (g\rhd \tau) = \nu\eta$ with $d(\nu) = n$ and $d(\eta) = d(\lambda)$. In particular, $\nu \in v\Lambda^n$ and $\lambda g \tau \in c\, \Cc \cap \nu\, \Cc$. That is, $v \Lambda^n$ is exhaustive, and therefore~\labelcref{itm:cf-ck} implies that $S_v = \bigvee_{\mu \in v\Lambda^n} S_\mu S^*_\mu$. If $\mu,\nu \in v \Lambda^n$ are distinct, then the factorisation property implies that $\mu \Lambda \cap \nu \Lambda = \varnothing$, so by~\labelcref{itm:cf-range} the projections $S_\mu S^*_\mu$, $\mu \in v\Lambda^n$ are mutually orthogonal. We conclude that $S_v = \sum_{\mu \in v\Lambda^n} S_\mu S^*_\mu$, so $S$ satisfies~\labelcref{itm:CK}.
	
	Now suppose that $S$ satisfies~\labelcref{itm:CK} and fix a finite exhaustive subset $F \subseteq v\Cc$. Factorise each $c \in F$ as $c = \mu_c g_c$ with $\mu_c \in \Lambda$ and $g_c \in \Gg$, and let $G \coloneqq \{\mu_c : c \in F\}$. Using~\labelcref{eq:principle_ideals} as above, we see that $G\Cc = F\Cc$ and that $\bigvee_{c \in F} S_c S^*_c = \bigvee_{\mu \in G} S_\mu S^*_\mu$. For $\lambda \in v\Lambda$, we have $\lambda \Cc \cap F\Cc \not= \varnothing$ because $F$ is exhaustive, and therefore $\lambda \Cc \cap G \Cc \not= \varnothing$, say $\lambda c = \mu d$. Factorising $c = \alpha g$ and $d = \beta h$ we have $\lambda \alpha g = \mu \beta h$, and then uniqueness of factorisation in $\Lambda \bowtie \Gg$ gives $\lambda\alpha = \mu\beta$. So $\MCE(\lambda,\mu) \not= \varnothing$. That is, $G$ is exhaustive in $\Lambda$. Since $\Lambda$ has no sources, $\Lambda^{\le n} = \Lambda^n$ for all $n$, and so since $S$ satisfies~\labelcref{itm:CK}, it satisfies \cite[Definition~3.3(4)]{RSY03}. So the argument of \cite[Lemma~B.4]{RSY04} shows that $S$ satisfies $S_v = \bigvee_{\mu \in G} S_\mu S^*_\mu$, giving~\labelcref{itm:cf-ck}.
	The argument of \cite[Lemma~7.2]{KPS15} shows that the analogue of \cite[Equation~(7.2)]{KPS15} holds. Applying this with $\mu = \nu$, $\eta = \zeta$, and $n = d(\mu) \vee d(\eta)$, gives~\labelcref{itm:cf-range}.
\end{proof}

\begin{rmk}
	Since~\labelcref{itm:cf-mult} for a cocycle $\sigma$ determines a function $\omega$ as in \cref{prop:relation_relationships} by $\omega(c_1, c_2) = \sigma(c_1, c_2)1_{\Mm(A)}$, \cref{prop:relation_relationships} has useful implications for $C^*$-algebras generated by $\sigma$-twisted representations of categories of the form $\Lambda \bowtie \Gg$ as above. Firstly, it shows that the $C^*$-algebras $\Tt C^*(\Cc, \sigma)$ and $C^*(\Cc, \sigma)$ defined above are canonically isomorphic to the $C^*$-algebras $\Tt C^*(\Gg, \Lambda; \sigma)$ and $C^*(\Gg, \Lambda; \sigma)$ of \cite[Section~7]{MS?}, so our definitions are consistent with the previous literature. Secondly, and as a result, it gives us access to the results of \cite[Section~7]{MS?}. In particular, if $\Cc = \Lambda \bowtie \Gg$ is the Zappa--Sz\'ep product associated to a self-similar action of a groupoid on a row-finite $k$-graph with no sources and $\sigma$ is a $2$-cocycle on $\Cc$, then \cite[Proposition~7.7]{MS?} implies that there are injective homomorphisms $\Tt C^*(\Lambda, \sigma) \to \Tt C^*(\Cc, \sigma)$ and $C^*(\Lambda, \sigma) \to C^*(\Cc, \sigma)$ carrying generators to generators. Thirdly, we saw in the proof of \cref{prop:relation_relationships} that for $\mu,\nu \in \Lambda$, the set $\MCE(\mu,\nu)$ is an independent set generating $\mu\Cc \cap \nu\Cc$; since $\mu \vee \nu$ denotes any choice of such a set, we can always take $\mu \vee \nu = \MCE(\mu,\nu)$ and so \cref{prop:relation_relationships}\labelcref{itm:relrel-iii} implies that
	\begin{equation}\label{eq:range_prod_sum}
		S_\mu S^*_\mu S_\nu S^*_\nu = \sum_{\lambda \in \mu \vee \nu} S_\lambda S^*_\lambda
	\end{equation}
	for all $\mu,\nu \in \Lambda$ and any $\sigma$-twisted representation $S$ of $\Cc$.
\end{rmk}

\begin{example}
	Let $\Cc = \Lambda$ be a row-finite $k$-graph with no sources and let $S \colon \Lambda \to A$ be a $\sigma$-twisted representation of $\Lambda$ in a $C^*$-algebra $A$. Let $\Gg = \Lambda^0$, the trivial groupoid with unit space $\Lambda^0$ in which the only elements are the units. Then  $\Gg$ trivially acts self-similarly on $\Lambda$, and $\Lambda \bowtie \Gg \cong \Lambda$. \Cref{prop:relation_relationships} then implies that $C^*(\Lambda \bowtie \Gg, \sigma)$ is the twisted $k$-graph $C^*$-algebra $C^*(\Lambda, \sigma)$ of \cite{KPS15}.
\end{example}

\subsection{Maps induced by inclusions of subcategories}
We are interested in which inclusions of categories induce homomorphisms between twisted $C^*$-algebras. As \cref{example:concordance_counterexample} below demonstrates, it is not typically true that an inclusion of one category in another induces a $*$-homomorphism, even of Toeplitz algebras. To identify the obstruction we introduce the following terminology.

\begin{dfn}\label[dfn]{dfn:concordance}
	Let $\Xx$ be a finitely aligned left-cancellative small category. Let $\Cc$ be a subcategory of $\Xx$. We say that $\Cc$ is a \emph{concordant} subcategory if for every $c_1,c_2 \in \Cc$ with $c_1\Xx \cap c_2\Xx \ne \varnothing$ there is a finite independent set $F \subseteq \Cc$ generating $c_1 \Cc \cap c_2 \Cc$ such that for all $x_1,x_2 \in \Xx$ satisfying $c_1 x_1 = c_2 x_2$ there exist $c_1a_1 = c_2 a_2 \in F$ and $y \in \Xx$ such that the diagram
	\[
	\begin{tikzcd}
		& \bullet \\
		\bullet && \bullet & \bullet \\
		& \bullet
		\arrow["{c_1}"', from=1-2, to=2-1]
		\arrow["{a_1}"', dashed, from=2-3, to=1-2]
		\arrow["{a_2}", dashed, from=2-3, to=3-2]
		\arrow["{x_1}"', bend right, from=2-4, to=1-2]
		\arrow["y"', dashed, from=2-4, to=2-3]
		\arrow["{x_2}", bend left, from=2-4, to=3-2]
		\arrow["{c_2}", from=3-2, to=2-1]
	\end{tikzcd}
	\]
	commutes.
\end{dfn}

\begin{rmk}
	\emph{A priori} the $a_1$ and $a_2$ in \cref{dfn:concordance} need not belong to $\Cc$ as long as $c_1 a_1 = c_2 a_2$ does; but since $c_1 a_1 = c_2 a_2 \in F \subseteq c_1 \Cc \cap c_2 \Cc$, there exist $a_1', a_2' \in \Cc$ such that $c_1 a_1' = c_1 a_1 = c_2 a_2 = c_2 a_2'$, and then left cancellativity gives $a_1 = a_1' \in \Cc$ and $a_2 = a_2' \in \Cc$.
\end{rmk}

\begin{rmk}
	If $\Cc$ is a concordant subcategory of a finitely aligned left-cancellative small category $\Xx$, then $\Cc$ is itself finitely aligned and left-cancellative.
\end{rmk}

\begin{ntn}
	If $\Cc$ is a subcategory of $\Xx$ and $\sigma \colon \Xx^2 \to \TT$ is a $2$-cocycle then we abuse notation and also write $\sigma$ for the restriction $\sigma|_{\Cc^2} \colon \Cc^2 \to \TT$, which is itself a $2$-cocycle on $\Cc$.
\end{ntn}

\begin{prop}\label[prop]{prop:toeplitz_map}
	Let $\Xx$ be a finitely aligned left-cancellative small category and suppose that $\Cc$ is a concordant subcategory of $\Xx$. Fix a $2$-cocycle $\sigma \colon \Xx^2 \to \TT$.
	Let $t^\Cc \colon \Cc \to \Tt C^*(\Cc, \sigma)$ and $t^{\Xx} \colon \Xx \to \Tt C^*(\Xx, \sigma)$ be universal representations.
	There is a unique $*$-homomorphism $\Phi \colon \Tt C^*(\Cc, \sigma) \to \Tt C^*(\Xx , \sigma)$ such that $\Phi(t_c^{\Cc}) = t_c^{\Xx}$ for all $c \in \Cc$. If, for every $v \in \Cc^0$, every finite exhaustive set $F \subseteq v \Cc$ is also exhaustive in $v\Xx$, then $\Phi$ descends to a $*$-homomorphism $\ol{\Phi} \colon C^*(\Cc, \sigma) \to C^*(\Xx, \sigma)$ such that $\ol{\Phi}(s_c^{\Cc}) = s_c^{\Xx}$ for all $c \in \Cc$.
\end{prop}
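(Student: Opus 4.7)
The plan is to apply the universal property of $\Tt C^*(\Cc, \sigma)$: I will show that the family $\{t^\Xx_c : c \in \Cc\}$ inside $\Tt C^*(\Xx, \sigma)$ is a $\sigma$-twisted representation of $\Cc$ in the sense of \cref{dfn:twisted_lcsc_algebra}, and that it descends to a covariant representation when the exhaustiveness-preservation hypothesis holds.

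The conditions \labelcref{itm:cf-mult} and \labelcref{itm:cf-source} are automatic for the family $\{t^\Xx_c : c \in \Cc\}$: since $\Cc$ is a subcategory of $\Xx$, composition of elements of $\Cc$ is computed in $\Xx$, and since we restrict $\sigma$ from $\Xx^2$ to $\Cc^2$, the multiplication and source identities just restate the corresponding properties of $t^\Xx$.

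The real work is in verifying~\labelcref{itm:cf-range}, and this is precisely what concordance is designed to do. Given $c_1,c_2 \in \Cc$ and a finite independent set $F \subseteq \Cc$ as in \cref{dfn:concordance} (so that $F$ generates $c_1 \Cc \cap c_2 \Cc$ inside $\Cc$), I claim that $F$ also generates $c_1 \Xx \cap c_2 \Xx$ in $\Xx$. The inclusion $F\Xx \subseteq c_1 \Xx \cap c_2 \Xx$ is immediate. For the reverse inclusion, any element of $c_1\Xx \cap c_2\Xx$ has the form $c_1 x_1 = c_2 x_2$ with $x_1,x_2 \in \Xx$, and the concordance diagram produces $c_1 a_1 = c_2 a_2 \in F$ and $y \in \Xx$ such that $x_1 = a_1 y$ and $x_2 = a_2 y$, so $c_1 x_1 = (c_1 a_1) y \in F\Xx$. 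Since $t^\Xx$ is a $\sigma$-twisted representation of $\Xx$, \cref{rmk:independence_not_needed} applied in $\Xx$ gives
\[
t^\Xx_{c_1}(t^\Xx_{c_1})^* t^\Xx_{c_2}(t^\Xx_{c_2})^* = \bigvee_{c \in F} t^\Xx_c (t^\Xx_c)^*,
\]
which is precisely~\labelcref{itm:cf-range} for the family $\{t^\Xx_c : c \in \Cc\}$. The universal property of $\Tt C^*(\Cc, \sigma)$ now produces the unique $*$-homomorphism $\Phi$ with $\Phi(t^\Cc_c) = t^\Xx_c$.

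For the second statement, compose $\Phi$ with the quotient map $q \colon \Tt C^*(\Xx,\sigma) \to C^*(\Xx,\sigma)$, and show that $q \circ \Phi$ annihilates the defining ideal $I \subseteq \Tt C^*(\Cc,\sigma)$ for $C^*(\Cc,\sigma)$. A generator of $I$ has the form $t^\Cc_v - \bigvee_{c \in F} t^\Cc_c (t^\Cc_c)^*$ for $v \in \Cc^0$ and $F \subseteq v\Cc$ finite exhaustive. Its image under $q \circ \Phi$ is $s^\Xx_v - \bigvee_{c \in F} s^\Xx_c (s^\Xx_c)^*$, which vanishes because the assumption guarantees that $F$ is exhaustive in $v\Xx$, so \labelcref{itm:cf-ck} applied to $s^\Xx$ in $C^*(\Xx,\sigma)$ does the job. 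Thus $q \circ \Phi$ factors through $C^*(\Cc,\sigma) = \Tt C^*(\Cc,\sigma)/I$, giving the required $\ol{\Phi}$.

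The main obstacle here is the verification of \labelcref{itm:cf-range}: a priori an independent generating set for $c_1\Cc \cap c_2\Cc$ in $\Cc$ need not generate $c_1\Xx \cap c_2\Xx$ in $\Xx$ (intersections can get strictly larger in the overcategory), so without concordance there is no reason for the two ``$\bigvee$''-expressions to match. Concordance is exactly the combinatorial hypothesis that rules this pathology out, and the rest of the argument is then a straightforward unpacking of universal properties.
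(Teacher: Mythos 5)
Your proposal is correct and follows essentially the same route as the paper: verify \labelcref{itm:cf-mult}--\labelcref{itm:cf-range} for $t^{\Xx}|_{\Cc}$, using concordance to show that an independent set generating $c_1\Cc \cap c_2\Cc$ also generates $c_1\Xx \cap c_2\Xx$, and then pass to the covariant quotient via the exhaustiveness hypothesis. The only (trivial) omission is the case $c_1\Xx \cap c_2\Xx = \varnothing$, where concordance supplies no set $F$; there one notes that $c_1\Cc \cap c_2\Cc = \varnothing$ as well, so \labelcref{itm:cf-range} holds with both sides equal to zero.
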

\begin{proof}
	
	For the first statement it suffices to show that $t^{\Xx}|_{\Cc} \colon \Cc \to \Tt C^*(\Xx, \sigma)$ satisfies \labelcref{itm:cf-mult}--\labelcref{itm:cf-range} of \cref{dfn:twisted_lcsc_algebra} for $(\Cc,\sigma)$. That \labelcref{itm:cf-mult}~and~\labelcref{itm:cf-source} hold is immediate: these relations for elements of $\Cc$ are identical to the same relations for those elements regarded as elements of $\Xx$.
	
	For \labelcref{itm:cf-range} suppose that $c_1,c_2 \in \Cc$. First suppose that $c_1 \Xx \cap c_2 \Xx = \varnothing$. Then $c_1 \Cc \cap c_2 \Cc = \varnothing$ and so  \labelcref{itm:cf-range} holds since $t_{c_1}^{\Cc}t_{c_1}^{\Cc *} t_{c_2}^{\Cc} t_{c_2}^{\Cc *} = 0$.
	Now suppose that  $c_1 \Xx \cap c_2 \Xx \ne \varnothing$. Since $\Cc$ is concordant in $\Xx$ there exists a finite independent set $F \subseteq \Cc$ generating $c_1 \Cc\cap c_2 \Cc$ satisfying the condition of \cref{dfn:concordance}. We claim that $F$ generates $ c_1 \Xx \cap c_2 \Xx$. If $x \in c_1 \Xx \cap c_2 \Xx$, then there exist $x_1,x_2 \in \Xx$ such that $x = c_1 x_1 = c_2 x_2$. Since $F$ satisfies the condition of \cref{dfn:concordance}, there exist $c_1 a_1 = c_2 a_2 \in F$ and $y \in \Xx$ such that $x_1 = a_1 y$ and $x_2 = a_2 y$. Hence,
	\[
	x = c_1 x_1 = c_1 a_1 y \in F \Xx.
	\]
	That is, $F$ generates $ c_1 \Xx \cap c_2 \Xx$.  \cref{rmk:independence_not_needed} gives $t^\Xx_{c_1} (t^\Xx_{c_1})^* t^\Xx_{c_2} (t^\Xx_{c_2})^* = \bigvee_{c \in F} t^\Xx_{c} (t^\Xx_{c})^*$, and since $F \subseteq \Cc$, this gives~\labelcref{itm:cf-range} for the representation $t^{\Xx}|_{\Cc}$ of $(\Cc,\sigma)$.
	
	For the second statement let $s^{\Xx} \colon \Xx \to C^*(\Xx,\sigma)$ be a universal covariant representation. It suffices to show that $s|_{\Cc} \colon \Cc \to C^*(\Xx,\sigma)$ satisfies \labelcref{itm:cf-ck} of \cref{dfn:twisted_lcsc_algebra}; but this follows from the hypothesis that finite exhaustive sets in $\Cc$ are exhaustive in $\Xx$.
\end{proof}

The next example demonstrates that inclusions of the form $\Cc \hookrightarrow \Dd \bowtie \Cc$ are not always concordant, even for monoids.

\begin{example}\label[example]{example:concordance_counterexample}
	Define an action $\la$ of $\FF_2^{+} = \langle a, b\rangle$ on $\NN$ by $w \la n = n$ for all $w \in \FF_2^+$, and $n \in \NN$ and a right action of $\NN$ on $\FF_2^{+}$ by $w \ra n = a^{|w|}$ for all $w \in \FF_n^+$ and $n \in \NN \setminus \{0\}$ and $w \ra 0 = w$.
	Then $(\FF_2^+,\NN)$ is a matched pair. Since $w \la \cdot : \NN \to \NN$ is injective for each $w \in \FF_2^+$, \cite[Lemma~3.15]{MS?} implies that $\Xx \coloneqq \NN \bowtie \FF_2^{+}$ is left cancellative.
	
	Recall that for $w,u \in \FF_2^+$, if $w \FF_2^+ \cap u \FF_2^+$ is nonempty, then either $w = uw'$ or $u = w u'$; we then denote the unique minimal common extension by $w \vee u$.
	
	To see that $\Xx$ is finitely aligned fix $n \in \NN$ and $ w,u \in \FF_2^+$. We first claim that
	\begin{equation}\label{eq:bad_ideals}
		nw \Xx \cap nu \Xx =
		\begin{cases*}
			n (w \vee u) \Xx \cup (n+1)a^{\max\{|w|,|u|\}} \Xx & if $w \FF_2^+ \cap u \FF_2^+ \ne \varnothing$,\\
			(n+1)a^{\max\{|w|,|u|\}} \Xx & otherwise.
		\end{cases*}
	\end{equation}
	For $\supseteq$ note that if $w \FF_2^+ \cap u \FF_2^+ \ne \varnothing$, then $n(w \vee u) \in nw\Xx \cap nu \Xx$. For any $v \in \FF_2^+$ we have $nv1 = n(v \la 1)(v \ra 1)= (n+1) a^{|v|}$. In particular, $(n+1)a^{\max\{|w|,|u|\}} \in  nw\Xx \cap nu \Xx$. Hence, the right-hand side of \eqref{eq:bad_ideals} is contained in the left-hand side.
	
	Now suppose that $x \in nw \Xx \cap nu \Xx$. Then $x = nwkv = nuk'v'$ for some $k,k' \in \NN$ and $v,v' \in \FF_2^{+}$. So
	\[
	(n+k)(w \ra k) v = nwkv = nuk'v' = (n+k') (u \ra k') v'.
	\]
	By uniqueness of factorisation $k = k'$. If $k = 0$, then $x = nwv = nuv'$ so $wv = uv' \in w \FF_2^+ \cap u \FF_2^+$ and $x \in n(w \vee u) \Xx$. If $k \ge 1$, then $x = (n+1)a^{|w|}(k-1)v = (n+1)a^{|u|}(k'-1)v' \in (n+1) a^{\max\{|w|,|u|\}}\Xx$. So \eqref{eq:bad_ideals} holds as claimed.
	
	Now suppose that $nw \Xx \cap m u \Xx$ is nonempty, say $x \in nw \Xx \cap m u \Xx$. Without loss of generality, $m \ge n$, so $m = n + n'$ for some $n' > 0$. If $m = n$, then \eqref{eq:bad_ideals} establishes that $nw\Xx \cap m u \Xx$ is a finite union of principal right ideals. So suppose that $m > n$.
	Then $x = nw k v = (n+n')u k' v'$ for some $k,k' \in \NN$ and $v,v' \in \FF_2^+$. Uniqueness of factorisations implies that $k \ge n'$, so $x = (n+n')a^{|w|} (k-n')v' = (n+n') u k'v'$. It follows that $nw \Xx \cap mu \Xx = m a^{|w|} \Xx \cap mu \Xx$ which is an intersection of the form covered by \eqref{eq:bad_ideals}. Hence, $\Xx$ is finitely aligned.

	The inclusion $\FF_2^+ \hookrightarrow \Xx$ is not concordant since the ideal $a \Xx \cap b\Xx$ contains $b1 = 1a = a1$, but $a \FF_2^+ \cap b \FF_2^+ = \varnothing$.
	On the level of Toeplitz algebras, in $\Tt C^*(\FF_2^+, 1)$ we have $t_a t_a^* t_b t_b^* = 0$, while in $\Tt C^*(\Xx, 1)$ the element $t_{1a}t_{1a}^{ *}$ is a nonzero subprojection of $t_a t_a^{ *} t_b t_b^{ *}$. Since $*$-homomorphisms preserve orthogonality, there is no $*$-homomorphism $\Phi \colon \Tt C^*(\FF_2^+, 1) \to \Tt C^*(\Xx, 1)$ such that $\Phi \circ t^{\FF_2^+} = t^{\Xx}$.
\end{example}

\begin{lem}\label[lem]{lem:self_similar_concordance}
	Fix $k \ge 0$. Suppose that $\Gg$ is a discrete groupoid acting self-similarly on a finitely aligned $(k+1)$-graph $\Lambda$. Let $\Gamma = \Lambda^{\NN^k}$ and $E^* = \Lambda^{\NN e_{k+1}}$, so that $\Lambda = E^* \bowtie \Gamma$ as in \cref{ex:k-graph_zs}. Then $\Gamma \bowtie \Gg \hookrightarrow \Lambda \bowtie \Gg$ is concordant.
\end{lem}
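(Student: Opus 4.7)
The plan is to reduce the concordance check to a pure $k$-graph factorization argument, exploiting the invertibility of the $\Gg$-factors in the Zappa--Sz\'ep product. Write $\Xx \coloneqq \Lambda \bowtie \Gg$ and $\Cc \coloneqq \Gamma \bowtie \Gg$, and fix $c_1, c_2 \in \Cc$ with $c_1 \Xx \cap c_2 \Xx \neq \varnothing$. Writing $c_i = \gamma_i g_i$ with $\gamma_i \in \Gamma$ and $g_i \in \Gg$, the invertibility of $g_i$ gives $c_i \Xx = \gamma_i \Xx$ and $c_i \Cc = \gamma_i \Cc$. I would first reduce to the pair $(\gamma_1, \gamma_2)$: given a witness set $F \subseteq \Cc$ for their concordance, for any $x_1, x_2 \in \Xx$ with $c_1 x_1 = c_2 x_2$ I apply concordance to $\gamma_1 (g_1 x_1) = \gamma_2 (g_2 x_2)$ to obtain $\gamma_1 \tilde a_1 = \gamma_2 \tilde a_2 \in F$ and $y \in \Xx$ with $g_i x_i = \tilde a_i y$; then $a_i \coloneqq g_i^{-1} \tilde a_i$ lies in $\Cc$ (because self-similar actions preserve degree), and a direct computation gives $c_i a_i = \gamma_i \tilde a_i \in F$ and $a_i y = x_i$.

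To produce $F$ for the pair $(\gamma_1, \gamma_2)$, I would pick a finite independent set $G_0 \subseteq \Gamma$ generating $\gamma_1 \Gamma \cap \gamma_2 \Gamma$ (available by \cref{lem:sub_kgraph_fin_align}). The main claim is that $G_0$ already generates $\gamma_1 \Xx \cap \gamma_2 \Xx$. Since $d(\gamma_1), d(\gamma_2) \in \NN^k$, any common extension $\omega = \gamma_1 \mu_1 = \gamma_2 \mu_2 \in \gamma_1 \Lambda \cap \gamma_2 \Lambda$ has $d(\omega) \geq d(\gamma_1) \vee d(\gamma_2) \in \NN^k$, so the factorization property in $\Lambda$ lets me write $\omega = \beta \tau$ with $d(\beta) = d(\gamma_1) \vee d(\gamma_2)$, and hence $\beta \in \Gamma$. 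Uniqueness of factorization at the degrees $d(\gamma_1)$ and $d(\gamma_2)$ forces $\beta \in \gamma_1 \Gamma \cap \gamma_2 \Gamma = G_0 \Gamma$, so $\omega \in G_0 \Lambda$. Combining with unique factorization in $\Xx$ yields $\gamma_1 \Xx \cap \gamma_2 \Xx = G_0 \Xx$, and I set $F = G_0$.

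Finally, to verify the concordance condition with this $F$, take $x_1, x_2 \in \Xx$ with $\gamma_1 x_1 = \gamma_2 x_2$ and write $x_i = \lambda_i h_i$ with $\lambda_i \in \Lambda$ and $h_i \in \Gg$; uniqueness in $\Xx$ gives $\gamma_1 \lambda_1 = \gamma_2 \lambda_2$ and $h_1 = h_2$. The argument of the previous paragraph writes $\gamma_1 \lambda_1 = \beta \nu$ with $\beta = \eta \xi$ for some $\eta \in G_0$ and $\xi \in \Gamma$, and $\eta = \gamma_i a_i$ for some $a_i \in \Gamma$. Left cancellation in $\Lambda$ yields $\lambda_i = a_i \xi \nu$, and $y \coloneqq \xi \nu h_1 \in \Xx$ then satisfies $a_i y = (a_i \xi \nu) h_1 = \lambda_i h_i = x_i$ for both $i$. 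I expect the main subtlety to lie in the reduction step, where the Zappa--Sz\'ep composition law and the groupoid inverse must be carefully tracked to verify that $c_i a_i = \gamma_i \tilde a_i$ and $a_i y = x_i$; the $k$-graph factorization argument driving the remaining steps is standard.
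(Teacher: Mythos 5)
Your proof is correct; the differences from the paper's argument are organizational rather than substantive, but they are worth noting. The paper keeps the groupoid elements in play throughout: writing $c_i = \mu_i g_i$ and $x_i = \nu_i\alpha_i h_i$, it uses the matched-pair actions to exhibit $\mu_i g_i \nu_i (g_i \ra \nu_i)^{-1}$ as a common extension of $c_1, c_2$ inside $\Gamma \bowtie \Gg$ dividing $c_1x_1 = c_2x_2$, and then takes $F$ to be an abstract independent generating set for $c_1(\Gamma\bowtie\Gg) \cap c_2(\Gamma\bowtie\Gg)$ supplied by finite alignment of $\Gamma\bowtie\Gg$ (\cref{lem:groupoid_gives_finitely_aligned}). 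You instead strip the groupoid elements first, using invertibility to reduce to the pair $(\gamma_1,\gamma_2)$ of $\Gamma$-parts, and then run a pure $k$-graph degree argument: factoring a common extension at degree $d(\gamma_1)\vee d(\gamma_2) \in \NN^k$ lands you back in $\Gamma$, so an independent generating set $G_0$ for $\gamma_1\Gamma\cap\gamma_2\Gamma$ (in effect $\MCE(\gamma_1,\gamma_2)$) already generates $\gamma_1\Xx\cap\gamma_2\Xx$. Your route buys a more explicit witness set and needs only finite alignment of $\Gamma$ rather than of $\Gamma\bowtie\Gg$; the paper's avoids the degree bookkeeping and the two-stage reduction. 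Two small points you should make explicit: that $G_0$, being independent in $\Gamma$, is also independent as a subset of $\Gamma\bowtie\Gg$ (if $\eta = \eta'\nu h$ with $\nu\in\Gamma$, $h\in\Gg$, then unique factorisation forces $h$ to be a unit and $\eta\in\eta'\Gamma$), and that $G_0$ generates $\gamma_1(\Gamma\bowtie\Gg)\cap\gamma_2(\Gamma\bowtie\Gg)$, not just $\gamma_1\Gamma\cap\gamma_2\Gamma$; both follow from the same unique-factorisation argument you already use for $\Xx$.
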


\begin{proof}
	
	Fix $\mu_1g_1, \mu_2 g_2 \in \Gamma \bowtie \Gg$. Suppose that $x_1,x_2 \in \Lambda \bowtie \Gg$ satisfy $\mu_1 g_1 x_1 = \mu_2 g_2 x_2$. For $i = 1,2$ there exist unique $\nu_i \in \Gamma$, $\alpha_i \in E^*$ and $h_i \in \Gg$ such that $x_i = \nu_i \alpha_i h_i$. We have
	\begin{equation}\label{eq:inverses_cancelling}
		\mu_i g_i \nu_i (g \ra \nu_i)^{-1} = \mu_i (g_i \la \nu_i )(g_i \ra \nu_i)(g_i \ra \nu_i)^{-1} = \mu_i (g_i \la \nu_i ).
	\end{equation}
	Since $\mu_1 g_1 x_1 = \mu_2 g_2 x_2$, we have
	\[
	\mu_1 (g_1 \la \nu_1) ((g_1 \ra \nu_1) \la \alpha_1)(g_1 \ra \nu_1\alpha_1)h_1 = \mu_2 (g_2 \la \nu_2) ((g_2 \ra \nu_2) \la \alpha_2)(g_2 \ra \nu_2\alpha_2)h_2.
	\]
	Uniqueness of factorisation in $\Lambda \bowtie \Gg$ implies that $\mu_1 (g_1 \la \nu_1) ((g_1 \ra \nu_1) \la \alpha_1) = \mu_2 (g_2 \la \nu_2) ((g_2 \ra \nu_2) \la \alpha_2)$, so uniqueness of factorisations in $\Lambda = E^* \bowtie \Gamma$ and that the action of $\Gg$ on $\Lambda$ is degree preserving (\cref{dfn:self-similar_action}) implies that $\mu_1(g_1 \la \nu_1) = \mu_2 (g_2 \la \nu_2)$.
	It follows from~\eqref{eq:inverses_cancelling} that $\mu_1 g_1 \nu_1 (g \ra \nu_1)^{-1} = \mu_2 g_2 \nu_2 (g \ra \nu_2)^{-1} \in \mu_1 g_1 (\Gamma \bowtie \Gg) \cap \mu_2 g_2 (\Gamma \bowtie \Gg)$.
	
	Since $\Gamma \bowtie \Gg$ is finitely aligned (\cref{lem:groupoid_gives_finitely_aligned}), by the discussion following \labelcref{eq:finitely_aligned} there is a finite independent set $F \subseteq \Gamma \bowtie \Gg$ generating $\mu_1 g_1 (\Gamma \bowtie \Gg) \cap \mu_2 g_2 (\Gamma \bowtie \Gg)$. Hence, there exists $f \in F$ such that $\mu_1 g_1 \nu_1 (g \ra \nu_1)^{-1} = \mu_2 g_2 \nu_2 (g \ra \nu_2)^{-1} \in f (\Gamma \bowtie \Gg)$. So there exist $a_1,a_2, y \in \Gamma \bowtie \Gg$ such that $f = \mu_1 g_1 a_1 = \mu_2 g_2 a_2$ and
	\begin{equation}\label{eq:concordant_1}
		\mu_1 g_1 \nu_1 (g \ra \nu_1)^{-1} = \mu_1 g_1 a_1 y = fy= \mu_2 g_2 a_2 y = \mu_2 g_2 \nu_2 (g \ra \nu_2)^{-1}.
	\end{equation}
	We have
	\begin{equation}\label{eq:concordant_2}
		\mu_i g_i x_i  = \mu_i g_i \nu_i (g \ra \nu_i)^{-1}(g_i \ra \nu_i) \alpha_i h_i = \mu_i g_i a_i y (g_i \ra \nu_i) \alpha_i h_i \in F(\Lambda \bowtie \Gg).
	\end{equation}
	Now consider the following diagram.
	\[\begin{tikzcd}[ampersand replacement=\&,column sep=large]
		\& \bullet \& \bullet \\
		\bullet \& \bullet \& \bullet \& \bullet \\
		\& \bullet \& \bullet
		\arrow["{\mu_1g_1}"', from=1-2, to=2-1]
		\arrow["{\nu_1}"', from=1-3, to=1-2]
		\arrow["{g_1 \ra \nu_1}"', from=1-3, to=2-3]
		\arrow["{a_1}", from=2-2, to=1-2]
		\arrow["{a_2}"', from=2-2, to=3-2]
		\arrow["y"', from=2-3, to=2-2]
		\arrow["{\alpha_1 h_1}"', from=2-4, to=1-3]
		\arrow["{\alpha_2h_2}", from=2-4, to=3-3]
		\arrow["{\mu_2 g_2}", from=3-2, to=2-1]
		\arrow["{g_2 \ra \nu_2}", from=3-3, to=2-3]
		\arrow["{\nu_2}", from=3-3, to=3-2]
	\end{tikzcd}\]
	The outside hexagon commutes by assumption. The left-hand triangle commutes by definition of $a_1$ and $a_2$. Cancelling $\mu_1 g_1$ from the left side of the first equality of \labelcref{eq:concordant_1} and then multiplying on the right by $g_1 \ra \nu_1$ shows that the top rectangle commutes. Similarly, the bottom rectangle commutes. Cancelling \labelcref{eq:concordant_1} from the left of the second equality of \labelcref{eq:concordant_2} shows that the right-hand triangle commutes. So the diagram commutes and hence $\Gamma \bowtie \Gg$ is concordant in $\Lambda \bowtie \Gg$.
\end{proof}

\begin{lem}\label[lem]{lem:k_graph_exhaustive}
	Fix $k \ge 0$. Let $\Lambda$ be a locally convex $(k+1)$-graph. Let $\Gamma = \Lambda^{\NN^k}$ and $E^* = \Lambda^{\NN e_{k+1}}$, so that $\Lambda = E^* \bowtie \Gamma$ as in \cref{ex:k-graph_zs}. Then every finite exhaustive set in $\Gamma$ is exhaustive in $\Lambda$.
\end{lem}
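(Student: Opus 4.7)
The plan is to use \cref{lem:le_extensions_equal} in $\Gamma$ together with the factorisation \cref{eq:le_factorisation} in $\Lambda$ to transfer exhaustiveness from $\Gamma$ to $\Lambda$. First note that $\Gamma$ inherits local convexity from $\Lambda$, since the defining condition involves only indices $i, j \in \{1, \ldots, k\}$.

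Fix a finite exhaustive $F \subseteq v\Gamma$ and an arbitrary $\lambda \in v\Lambda$, and choose $N \in \NN^k$ with $d(f) \le N$ for every $f \in F$ and $d(\lambda)_i \le N_i$ for $i = 1, \ldots, k$. Set $M \coloneqq d(\lambda)_{k+1}$ and $N' \coloneqq (N, M) \in \NN^{k+1}$, so $d(\lambda) \le N'$. The key subclaim is that every $\sigma \in v\Gamma^{\le N}$ lies in $f\Gamma$ for some $f \in F$: exhaustiveness provides $f \in F$ with $\sigma\Gamma \cap f\Gamma \ne \varnothing$, and since $\sigma \in \Gamma^{\le N}$ and $d(f) \le N$, \cref{lem:le_extensions_equal} applied inside $\Gamma$ immediately gives $\sigma \in f\Gamma$.

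To produce such a $\sigma$ from $\lambda$, a standard greedy induction using local convexity (at each step, if the current source has no outgoing edges in the directions whose quota is unmet, halt; otherwise prepend an edge and recurse) shows that $s(\lambda)\Gamma^{\le N - (d(\lambda)_1, \ldots, d(\lambda)_k)}$ is non-empty; pick $\rho$ in this set. A direct check on degrees and source conditions then gives $\lambda\rho \in \Lambda^{\le N'}$. Splitting $N' = (N, 0) + (0, M)$ and invoking \cref{eq:le_factorisation} factorises $\lambda\rho = \sigma\tau$ with $\sigma \in \Lambda^{\le (N, 0)}$ and $\tau \in \Lambda^{\le (0, M)}$. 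Since the $(k+1)$-coordinate of $(N, 0)$ vanishes, elements of $\Lambda^{\le (N, 0)}$ have zero vertical degree and the source condition coincides with that in $\Gamma$, so $\Lambda^{\le (N, 0)} = \Gamma^{\le N}$ and hence $\sigma \in v\Gamma^{\le N}$. The subclaim then produces $f \in F$ with $\sigma \in f\Gamma$, so $\lambda\rho = \sigma\tau \in f\Lambda$; combined with $\lambda\rho \in \lambda\Lambda$ this gives the common extension $\lambda\rho \in \lambda\Lambda \cap f\Lambda$. The step requiring most care is confirming that the factor $\sigma$ produced by \cref{eq:le_factorisation} genuinely lies in $\Gamma^{\le N}$; the required source condition at the intermediate vertex $s(\sigma)$ follows from the corresponding condition at $s(\lambda\rho)$ by contrapositive local convexity along the vertical $\tau$, which is precisely the content of \cref{eq:le_factorisation}.
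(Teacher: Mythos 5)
Your proof is correct and follows essentially the same route as the paper's: extend $\lambda$ by an element of $s(\lambda)\Gamma^{\le \cdot}$, refactor via \labelcref{eq:le_factorisation} so that the $\Gamma$-part comes first, identify that part as an element of $\Gamma^{\le N}$, and apply the second statement of \cref{lem:le_extensions_equal} to conclude it lies in $f\Gamma$ for some $f \in F$. The only cosmetic differences are that you apply \cref{lem:le_extensions_equal} directly to $\sigma$ rather than first splitting it into $\Gamma^{\le m}\Gamma^{\le n}$ as the paper does, and that you make explicit the (correct and needed) observation that $\Gamma$ inherits local convexity from $\Lambda$.
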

\begin{proof}
	Fix $v \in \Lambda^0$ and a finite exhaustive set $F \subseteq v\Gamma$.  Fix $\lambda \in v \Lambda$. Let $ n \coloneqq (d(\lambda)_1,\ldots,d(\lambda)_k) \in \NN^k$ and $n' \coloneqq d(\lambda)_{k+1}e_{k+1}$. Let $m \coloneqq \bigvee_{\mu \in F} d(\mu)$.
	
	By \cite[Remarks~3.2]{RSY03} $s(\lambda) \Gamma^{\le m}$ is nonempty, so fix $\tau \in s(\lambda) \Gamma^{\le m}$. Since $\lambda \in \Lambda^{n + n'} \subseteq \Lambda^{\le n + n'}$ Lemma~3.6 of \cite{RSY03} implies that $\lambda \tau \in \Lambda^{\le n+n'+m}$. By \labelcref{eq:le_factorisation} we have $\lambda \tau\in \Lambda^{\le m+n} \Lambda^{\le n'}$. Since $m+n \in \NN^k$ we have $\Lambda^{\le m+n} = \Gamma^{\le m+ n}$. So by \labelcref{eq:le_factorisation} again, there exist $\tau' \in \Gamma^{\le m+ n}$ and $\alpha \in \Lambda^{n'}$ such that $ \lambda \tau=  \tau' \alpha$. Since $F$ is exhaustive in $\Gamma$, there exists $\mu \in F$ such that $\mu \Gamma \cap  \tau' \Gamma \ne \varnothing$. By yet another application of \labelcref{eq:le_factorisation} we have $\Gamma^{\le m+ n} = \Gamma^{\le m} \Gamma^{\le n}$, and so $\tau' = \eta \xi$ for some $\eta \in \Gamma^{\le m}$ and $\xi \in \Gamma^{\le n}$. As $\mu \Gamma \cap  \tau' \Gamma \ne \varnothing$ we have $\mu \Gamma \cap \eta \Gamma \ne \varnothing$ so by the second statement of \cref{lem:le_extensions_equal} we have $\eta \in \mu \Lambda$.  Hence,
	$
	\lambda \tau = \tau' \alpha = \eta \xi \alpha \in \mu \Lambda,
	$
	so $\mu \Lambda \cap \lambda \Lambda \ne \varnothing$. That is, $F \subseteq v \Lambda$ is exhaustive.
\end{proof}

\begin{lem}\label[lem]{lem:self-similar_exhaustive}
	Fix $k \ge 0$. Suppose that $\Gg$ is a discrete groupoid acting self-similarly on a locally convex $(k+1)$-graph $\Lambda$.  Let $\Gamma = \Lambda^{\NN^k}$ and $E^* = \Lambda^{\NN e_{k+1}}$, so that $\Lambda = E^* \bowtie \Gamma$ as in \cref{ex:k-graph_zs}. Then every finite exhaustive set in $\Gamma \bowtie \Gg$ is exhaustive in $\Lambda \bowtie \Gg$.
\end{lem}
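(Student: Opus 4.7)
The plan is to strip the groupoid component off of both the exhaustive set and the test element, reducing the problem to the purely $k$-graph-theoretic statement already established in \cref{lem:k_graph_exhaustive}. The key elementary fact that makes this reduction work is that if $h \in \Gg$, then $\mu h (\Lambda \bowtie \Gg) = \mu (\Lambda \bowtie \Gg)$; indeed, $h$ is invertible, so $\mu = \mu h h^{-1} \in \mu h (\Lambda \bowtie \Gg)$, giving one containment, and the reverse is trivial. The same identity holds with $\Gamma$ in place of $\Lambda$.

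Given a finite set $F \subseteq v(\Gamma \bowtie \Gg)$ that is exhaustive, write each element of $F$ in its unique factored form $\mu_f g_f$ with $\mu_f \in \Gamma$ and $g_f \in \Gg$, and set $G := \{\mu_f : f \in F\} \subseteq v\Gamma$. I would first check that $G$ is exhaustive in $v\Gamma$. Fix $\mu \in v\Gamma$. Regarding $\mu$ as an element of $v(\Gamma \bowtie \Gg)$ and using exhaustiveness of $F$, there exist $f = \mu_f g_f \in F$ and $x_1, x_2 \in \Gamma \bowtie \Gg$ with $\mu_f g_f x_1 = \mu x_2$. Factor $x_1 = \nu_1 h_1$ and $x_2 = \nu_2 h_2$ with $\nu_i \in \Gamma$, $h_i \in \Gg$; expanding the Zappa--Sz\'ep product and invoking uniqueness of factorisation in $\Gamma \bowtie \Gg$ separates the $\Gamma$- and $\Gg$-components, forcing $\mu_f(g_f \la \nu_1) = \mu \nu_2$. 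This element witnesses $\mu_f \Gamma \cap \mu\Gamma \neq \varnothing$, so $G$ is exhaustive in $v\Gamma$.

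By \cref{lem:k_graph_exhaustive}, $G$ is then exhaustive in $v\Lambda$. To finish, take any element $\lambda h \in v(\Lambda \bowtie \Gg)$ with $\lambda \in \Lambda$ and $h \in \Gg$. By the opening remark, $\lambda h(\Lambda \bowtie \Gg) = \lambda(\Lambda \bowtie \Gg)$ and $\mu_f g_f (\Lambda \bowtie \Gg) = \mu_f (\Lambda \bowtie \Gg)$ for every $f \in F$, so it suffices to find $f \in F$ with $\mu_f(\Lambda \bowtie \Gg) \cap \lambda(\Lambda \bowtie \Gg) \neq \varnothing$. But exhaustiveness of $G$ in $v\Lambda$ gives $f$ with $\mu_f \Lambda \cap \lambda \Lambda \neq \varnothing$, and any common extension in $\Lambda$ lies in $\mu_f(\Lambda \bowtie \Gg) \cap \lambda(\Lambda \bowtie \Gg)$.

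The only step requiring any care is showing that $G$ is exhaustive in $v\Gamma$; this is where the uniqueness of factorisation in the Zappa--Sz\'ep product, together with the fact that the self-similar action preserves $\Gamma$ (equivalently, that it is degree-preserving and $\NN^k \subseteq \NN^{k+1}$ is $\Gg$-invariant under $\la$), is essential. Everything else is a direct appeal to \cref{lem:k_graph_exhaustive} combined with the trivial observation that groupoid elements on the right do not change principal right ideals.
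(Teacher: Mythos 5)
Your proposal is correct and follows essentially the same route as the paper's proof: project the exhaustive set onto its $\Gamma$-components, verify via uniqueness of factorisation in the Zappa--Sz\'ep product that this projection is exhaustive in $\Gamma$, invoke \cref{lem:k_graph_exhaustive}, and then use invertibility of groupoid elements (your observation that $\mu h\,\Cc = \mu\,\Cc$) to pass back to $\Lambda \bowtie \Gg$. The only cosmetic difference is that the paper writes the final step as an explicit computation $\nu h (h^{-1}\la\nu')(h^{-1}\ra\nu') = \nu\nu' = \mu\mu' = \mu g g^{-1}\mu'$ rather than citing equality of principal right ideals up front.
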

\begin{proof}
	Fix a finite exhaustive set $F \subseteq v (\Gamma \bowtie \Gg)$. Then for every $\mu' \in v\Gamma \subseteq \Gamma \bowtie \Gg$, there exist $\mu g \in F$ and $\nu_1h_1,\nu_2h_2 \in \Gamma \bowtie \Gg$ such that $\mu' \nu_1h_1 = \mu  g \nu_2h_2 = \mu (g \la \nu_2)(g \ra \nu_2) h_2$. By uniqueness of factorisation in $\Gamma \bowtie \Gg$ we have $\mu' \nu_1 = \mu (g \la \nu_2)$ in $\Gamma$; that is $\mu' \Gamma \cap \mu \Gamma \ne \varnothing$. In other words, $F' \coloneqq \{\mu \in \Gamma \mid \mu
	\Gg \cap F \ne \varnothing\}$ is exhaustive in $\Gamma$, so by \cref{lem:k_graph_exhaustive} $F'$ is exhaustive in $\Lambda$.
	
	Now fix  $\nu h \in \Lambda \bowtie \Gg$.
	Since $F' $ is exhaustive in $\Lambda$, there exist $\mu g \in F$ and $\mu',\nu' \in \Lambda$ such that $\mu \mu' = \nu \nu'$. We have
	\begin{align*}
		\nu h (h^{-1} \la \nu') (h^{-1} \ra \nu') = \nu h h^{-1} \nu'
		= \nu \nu ' = \mu \mu' = \mu g g^{-1} \mu',
	\end{align*}
	so $\nu h (\Lambda \bowtie \Gg) \cap \mu g (\Lambda \bowtie \Gg) \ne \varnothing$. That is, $F$ is exhaustive in $\Lambda \bowtie \Gg$.
\end{proof}

\begin{thm}\label[thm]{thm:self-similar_inductive_maps}
	Fix $k \ge 0$. Suppose that $\Gg$ is a discrete groupoid acting self-similarly on a finitely aligned $(k+1)$-graph $\Lambda$.  Let $\Gamma = \Lambda^{\NN^k}$ and $E^* = \Lambda^{\NN e_{k+1}}$, so that $\Lambda = E^* \bowtie \Gamma$ as in \cref{ex:k-graph_zs}. Let $\sigma$ be a $\TT$-valued $2$-cocycle on $\Lambda \bowtie \Gg$.
	There is a unique $*$-homomorphism $\Phi \colon \Tt C^*(\Gamma \bowtie \Gg, \sigma) \to \Tt C^*(\Lambda \bowtie \Gg, \sigma) $ such that $\Phi(t_{\gamma g}^{\Gamma \bowtie \Gg}) = t_{\gamma g}^{\Lambda \bowtie \Gg}$ for all $\gamma g \in \Gamma \bowtie \Gg$.
	If $\Lambda$ is locally convex, then $\Phi$ descends to a $*$-homomorphism $\ol{\Phi} \colon  C^*(\Gamma \bowtie \Gg, \sigma) \to  C^*(\Lambda \bowtie \Gg, \sigma) $ such that $\ol{\Phi}(s_{\gamma g}^{\Gamma \bowtie \Gg}) = s_{\gamma g}^{\Lambda \bowtie \Gg}$ for all $\gamma g \in \Gamma \bowtie \Gg$.
\end{thm}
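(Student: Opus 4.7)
The plan is to assemble the theorem by applying Proposition \ref{prop:toeplitz_map} with $\Cc = \Gamma \bowtie \Gg$ and $\Xx = \Lambda \bowtie \Gg$, using the preceding two lemmas to verify its hypotheses. All the substantive work has already been done; what remains is to check that the ingredients fit together and that $\Gamma \bowtie \Gg$ really does sit inside $\Lambda \bowtie \Gg$ as a subcategory with the right properties.

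First I would observe that $\Lambda \bowtie \Gg$ is a finitely aligned left-cancellative small category: since $\Lambda$ is a finitely aligned $k$-graph (see \cref{ex:k-graph}) and $\Gg$ is a groupoid, \cref{lem:groupoid_gives_finitely_aligned} applies. The same reasoning, combined with \cref{lem:sub_kgraph_fin_align}, shows that $\Gamma \bowtie \Gg$ is also a finitely aligned left-cancellative small category, and the factorisation $\Lambda = E^* \bowtie \Gamma$ of \cref{ex:k-graph_zs} identifies $\Gamma$ with a wide subcategory of $\Lambda$, so $\Gamma \bowtie \Gg$ is naturally a subcategory of $\Lambda \bowtie \Gg$. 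Restricting $\sigma$ to $(\Gamma \bowtie \Gg)^2$ gives a $\TT$-valued $2$-cocycle on $\Gamma \bowtie \Gg$.

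For the first assertion I would apply \cref{lem:self_similar_concordance}, which tells us precisely that $\Gamma \bowtie \Gg$ is concordant in $\Lambda \bowtie \Gg$. With concordance in hand, \cref{prop:toeplitz_map} produces the unique $*$-homomorphism $\Phi \colon \Tt C^*(\Gamma \bowtie \Gg, \sigma) \to \Tt C^*(\Lambda \bowtie \Gg, \sigma)$ sending $t^{\Gamma \bowtie \Gg}_{\gamma g}$ to $t^{\Lambda \bowtie \Gg}_{\gamma g}$ for all $\gamma g \in \Gamma \bowtie \Gg$, and uniqueness follows because the generators $\{t^{\Gamma \bowtie \Gg}_{\gamma g}\}$ generate the domain as a $C^*$-algebra.

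For the second assertion, under the additional local convexity hypothesis, \cref{lem:self-similar_exhaustive} supplies exactly the remaining input required by \cref{prop:toeplitz_map}: every finite exhaustive subset of $v(\Gamma \bowtie \Gg)$ is also exhaustive in $v(\Lambda \bowtie \Gg)$. Hence the image under $\Phi$ of each generator of the ideal defining $C^*(\Gamma \bowtie \Gg, \sigma)$ as a quotient of $\Tt C^*(\Gamma \bowtie \Gg, \sigma)$ lies in the corresponding defining ideal of $C^*(\Lambda \bowtie \Gg, \sigma)$, so $\Phi$ descends to the desired map $\ol{\Phi}$. The main conceptual obstacle in this theorem lies entirely in the preceding lemmas, so no further work is needed; but if I had to flag a potential source of confusion, it is remembering that concordance is a genuine restriction (\cref{example:concordance_counterexample}) and that local convexity is needed for the second claim only because \cref{lem:k_graph_exhaustive}, on which \cref{lem:self-similar_exhaustive} relies, uses it essentially via the factorisation identity \labelcref{eq:le_factorisation}.
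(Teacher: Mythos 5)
Your proposal is correct and follows essentially the same route as the paper's own proof: establish finite alignment and left-cancellativity of the Zappa--Sz\'ep products via \cref{lem:sub_kgraph_fin_align} and \cref{lem:groupoid_gives_finitely_aligned}, invoke \cref{lem:self_similar_concordance} for concordance and \cref{prop:toeplitz_map} for $\Phi$, then use \cref{lem:self-similar_exhaustive} together with \cref{prop:toeplitz_map} again to obtain $\ol{\Phi}$ under local convexity. (The only cosmetic point: finite alignment of $\Lambda$ is a hypothesis of the theorem, so the appeal to \cref{ex:k-graph} is unnecessary.)
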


\begin{proof}
	By \cref{lem:sub_kgraph_fin_align}, $\Gamma$ is finitely aligned. Hence, \cref{lem:groupoid_gives_finitely_aligned} implies that $\Lambda \bowtie \Gg$ is left cancellative and finitely aligned. By \cref{lem:self_similar_concordance} the inclusion $\Gamma \bowtie \Gg \hookrightarrow \Lambda \bowtie \Gg$ is concordant, so \cref{prop:toeplitz_map} gives the desired homomorphism $\Phi \colon \Tt C^*(\Gamma \bowtie \Gg, \sigma) \to \Tt C^*(\Lambda \bowtie \Gg, \sigma)$. If $\Lambda$ is locally convex, then \cref{lem:self-similar_exhaustive} says that every finite exhaustive set in $\Gamma \bowtie \Gg$ is also exhaustive in $\Lambda \bowtie \Gg$. So \cref{prop:toeplitz_map} implies that $\Phi$ descends to a $*$-homomorphism $\ol{\Phi} \colon  C^*(\Gamma \bowtie \Gg, \sigma) \to  C^*(\Lambda \bowtie \Gg, \sigma) $ such that $\ol{\Phi}(s_{\gamma g}^{\Gamma \bowtie \Gg}) = s_{\gamma g}^{\Lambda \bowtie \Gg}$ for all $\gamma g \in \Gamma \bowtie \Gg$.
\end{proof}

We are interested in when the $*$-homomorphism $\ol{\Phi}$ of \cref{thm:self-similar_inductive_maps} is injective. To this end, we extend the terminology of Yusnitha~\cite{YusPhD} to our setting.

\begin{dfn}
	A self-similar action of a discrete groupoid $\Gg$ on a $k$-graph $\Lambda$ is \emph{jointly faithful} if for each $v \in \Lambda^0$ and each $n \in \NN^{k}$ there exists $\lambda \in \Lambda^n$ such that the map $g \mapsto (g \la \lambda, g \ra \lambda)$ is injective on $v \Gg v$.
\end{dfn}

\begin{rmk}\label[rmk]{rmk:amenability}
	For the next result, we need to assume that $\Gg$ is amenable, so that we can invoke \cite[Proposition~7.7]{MS?} to see that $C^*(\Gg, \sigma)$ embeds in $C^*(\Lambda \bowtie \Gg, \sigma)$. As in \cite[Chapter~9]{Wil19}, by \emph{amenable} we mean topologically amenable in the sense of \cite{Renault80} (see \cite[Definition~9.3]{Wil19}), and throughout this remark we adopt the notation of \cite[Chapter~9]{Wil19}. In our situation where $\Gg$ is discrete, topological amenability is equivalent to the condition that each isotropy group $x \Gg x$ is amenable as a discrete group. 
	
	To see why, first note that every discrete groupoid $\Gg$ is equivalent to a discrete group bundle whose fibres are isotropy groups of $\Gg$: fix any $X \subseteq \Gg^0$ that meets each orbit once in the sense that $x \Gg y = \varnothing$ for distinct $x,y \in \Gg$ and $r(\Gg X) = \Gg^0$; then by \cite[Example~2.39]{Wil19} applied with $A = X$ and $B = \Gg^0$, the space $\Gg X$ is a groupoid equivalence between $X \Gg X = \bigsqcup_{x \in X} x \Gg x$ and $\Gg$. Next, recall that topological amenability is preserved by equivalence \cite[Theorem~2.2.17]{A-DR00}, and so $\Gg$ is topologically amenable if and only if $X \Gg X$ is topologically amenable. Counting measures constitute a Haar system $(\lambda^x)_{x \in X}$ for $X \Gg X$, and given a net $(f_i)_{i \in I}$ of functions as in \cite[Definition~9.3]{Wil19}, for each $x \in X$ the net $(f_i|_{x \Gg x})_{i \in I}$ determines an approximate invariant mean on the group $x \Gg x$, so $x \Gg x$ is amenable. 
	
	Conversely if each $x \Gg x$ is amenable, we can fix an approximate invariant mean $\{f^x_n\}_{n \in \NN}$ on $x \Gg x$ (each $x \Gg x$ is countable, so sequences suffice). Consider the directed set $F(X)$ of finite subsets of $X$ under set containment. Then $\NN \times F(X)$ is a directed set in lexicographic order. For $(n, A) \in \NN \times F(X)$, the function $f_{n, A} = \sum_{x \in A} f^x_n$ belongs to $C_c(X \Gg X)$, and the net $(f_{n,A})_{(n, A) \in \NN \times F(X)}$ satisfies the requirements of \cite[Definition~9.3]{Wil19}; so $X \Gg X$ is topologically amenable.
\end{rmk}

In the statement of the following corollary, we invoke \cref{thm:self-similar_inductive_maps}, which applies only when $\Lambda$ is locally convex, to obtain the map $\ol{\Phi}$. So it is important to recall that $k$-graphs with no sources are always locally convex.

\begin{cor}\label[cor]{cor:subalg}
	Consider a jointly faithful self-similar action of a discrete amenable groupoid $\Gg$ on a row-finite $(k+1)$-graph $\Lambda$ with no sources.  Let $\Gamma = \Lambda^{\NN^k}$ and $E^* = \Lambda^{\NN e_{k+1}}$, so that $\Lambda = E^* \bowtie \Gamma$ as in \cref{ex:k-graph_zs}, and let $\sigma$ be a $\TT$-valued $2$-cocycle on $\Lambda \bowtie \Gg$. Then the $*$-homomorphism $\ol{\Phi} \colon  C^*(\Gamma \bowtie \Gg, \sigma) \to  C^*(\Lambda \bowtie \Gg, \sigma) $ of \cref{thm:self-similar_inductive_maps} is injective.
\end{cor}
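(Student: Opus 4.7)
The plan is to invoke a gauge-invariant uniqueness theorem (GIUT) for twisted self-similar $k$-graph $C^*$-algebras, of the sort established in \cite{MS?} (or via the groupoid model of \cite{Spi20}). Under the standing hypotheses---jointly faithful action of an amenable groupoid on a row-finite $k$-graph with no sources---such a GIUT asserts that a $*$-homomorphism out of $C^*(\Gamma \bowtie \Gg, \sigma)$ is injective provided it is equivariant for the canonical $\TT^k$-gauge action and restricts to an injection on the canonical copy of $C^*(\Gg, \sigma)$ furnished by \cite[Proposition~7.7]{MS?}.

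To establish equivariance, I would observe that $C^*(\Lambda \bowtie \Gg, \sigma)$ carries a canonical $\TT^{k+1}$-gauge action $\beta$ characterised on generators by $\beta_z(s^{\Lambda \bowtie \Gg}_{\lambda g}) = z^{d(\lambda)} s^{\Lambda \bowtie \Gg}_{\lambda g}$, and restricting along the embedding $w \mapsto (w, 1) \colon \TT^k \hookrightarrow \TT^{k+1}$ yields a $\TT^k$-action $\tilde\beta$. Since $\Gamma = \Lambda^{\NN^k}$ consists precisely of the morphisms in $\Lambda$ whose $(k+1)$st degree coordinate vanishes, the identity $d(\gamma g) = (d(\gamma), 0)$ for $\gamma g \in \Gamma \bowtie \Gg$ shows directly on generators that $\ol{\Phi}$ intertwines the canonical $\TT^k$-gauge action on $C^*(\Gamma \bowtie \Gg, \sigma)$ with $\tilde\beta$.

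For injectivity on the coefficient algebra, note that $\ol{\Phi}(s^{\Gamma \bowtie \Gg}_g) = s^{\Lambda \bowtie \Gg}_g$ for every $g \in \Gg$, so the restriction of $\ol{\Phi}$ to the canonical copy of $C^*(\Gg, \sigma)$ inside $C^*(\Gamma \bowtie \Gg, \sigma)$ coincides with the canonical map $C^*(\Gg, \sigma) \to C^*(\Lambda \bowtie \Gg, \sigma)$; this map is injective by another application of \cite[Proposition~7.7]{MS?} under the amenability assumption, as discussed in \cref{rmk:amenability}. The GIUT then delivers injectivity of $\ol{\Phi}$.

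The main obstacle is pinning down the precise form of the GIUT available in the twisted self-similar setting: the joint faithfulness hypothesis is exactly what is needed so that the boundary groupoid underlying $C^*(\Gamma \bowtie \Gg, \sigma)$ has sufficiently mild isotropy for the coefficient algebra $C^*(\Gg, \sigma)$ to detect injectivity, while amenability ensures that the full and reduced groupoid $C^*$-algebras agree and that the canonical inclusions behave as expected. Once these ingredients are assembled, the argument itself is a short verification of the GIUT hypotheses on generators.
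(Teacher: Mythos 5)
Your proposal is correct and follows essentially the same route as the paper: the paper's proof likewise observes that $\ol{\Phi} \circ \iota_{\Gg}^{\Gamma} = \iota_{\Gg}^{\Lambda}$, invokes \cite[Proposition~7.7]{MS?} for injectivity of both canonical copies of $C^*(\Gg,\sigma)$, and then applies the Gauge-Invariant Uniqueness Theorem \cite[Corollary~7.10]{MS?}. The only difference is that you spell out the on-generators verification of gauge-equivariance, which the paper leaves implicit.
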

\begin{proof}
	By \cite[Proposition~7.7]{MS?} the $*$-homomorphism $\iota_\Gg^{\Gamma} \colon C^*(\Gg,\sigma) \to C^*(\Gamma \bowtie \Gg, \sigma)$ satisfying $\iota_{\Gg}^{\Gamma} (u_g) = s_g$ and the $*$-homomorphism $\iota_\Gg^{\Lambda} \colon C^*(\Gg,\sigma) \to C^*(\Lambda \bowtie \Gg, \sigma)$ satisfying $\iota_{\Gg}^{\Lambda} (u_g) = s_g$ are injective. Since $\ol{\Phi} \circ \iota_{\Gg}^{\Gamma} = \iota_{\Gg}^{\Lambda}$, the Gauge-Invariant Uniqueness Theorem \cite[Corollary~7.10]{MS?} implies that $\ol{\Phi}$ is injective.
\end{proof}

\section{Homotopy of cocycles}
\label{sec:cocycle_homotopy}

We are interested in the effect on $C^*(\Cc, \sigma)$ of continuous variations in $\sigma$. To this end we introduce the following notion.
\begin{dfn}
	Let $\II \coloneqq [0,1]$. A \emph{homotopy of 2-cocycles} on a small category $\Cc$ is a continuous map $\Sigma \colon \II \times \Cc^2 \to \TT$, $(t,c_1,c_2) \mapsto \Sigma_t(c_1,c_2)$ whose restriction $\Sigma_t$ to $\{t\} \times \Cc^2$ is a 2-cocycle on $\Cc$ for each $t \in \II$. We say that the $2$-cocycles $\Sigma_0$ and $\Sigma_1$ are \emph{homotopic}. 	For each $(c_1,c_2) \in \Cc^2$ we let $\Sigma_{\bullet}(c_1,c_2)$ denote the continuous function $t \mapsto \Sigma_t(c_1,c_2)$ on $\II$.
\end{dfn}

\subsection{Twisting representations by homotopies of \texorpdfstring{\boldmath$2$}{2}-cocycles}

We consider representations of categories twisted by homotopies of $2$-cocycles. The idea is that such a representation should be thought of as a bundle of representations of the category twisted by continuously varying $2$-cocycles.

\begin{dfn}\label{dfn:twisted_lscs_algebra_homotopy}
	Let $\Cc$ be a finitely aligned left-cancellative small category and let $\Sigma$ be a homotopy of $2$-cocycles for $\Cc$. A \emph{$\Sigma$-twisted representation} of $\Cc$ in a $C^*$-algebra $A$ is a pair $(S,\CImap)$ consisting of a map $S \colon \Cc \to A$, $c \mapsto S_{c}$ such that each $S_c$ is a partial isometry, together with a $*$-homomorphism $\CImap_v \colon C(\II) \to A$ for each $v \in \Cc^0$,  such that $S$ satisfies \labelcref{itm:cf-source} and \labelcref{itm:cf-range} of \cref{dfn:twisted_lcsc_algebra}, the $\CImap_v$ have mutually orthogonal images, and	
	\begin{enumerate}[labelindent=0pt,labelwidth=\widthof{\ref{itm:cfc-mult}},label=(IR\arabic*), ref=(IR\arabic*),leftmargin=!]
		\item \label{itm:cfc-one}
		$\CImap_v(1_\II) = S_v$  for all $v \in \Cc^0$,
		\item \label{itm:cfc-commute}
		$\CImap_{r(c)}(f) S_c = S_c \CImap_{s(c)}(f)$ for all $c \in \Cc$ and $f \in C(\II)$, and
		\item \label{itm:cfc-mult}	$S_{c_1} S_{c_2} = \CImap_{r(c_1)}\big( \Sigma_{\bullet}(c_1,c_2) \big) S_{c_1 c_2}$ for all $(c_1,c_2) \in \Cc^2$.
	\end{enumerate}
	We say that $(S,\CImap)$ is \emph{covariant} if $S$ satisfies \labelcref{itm:cf-ck}.
	
	Given a $\Sigma$-twisted representation $(S,\CImap)$ of $\Cc$ in $A$, the \emph{$C^*$-algebra generated by $(S,\CImap)$}, denoted $C^*(S,\CImap)$, is the $C^*$-subalgebra of $A$ generated by $\{S_c \mid c \in \Cc\} \cup \bigcup_{v \in \Cc^0} \CImap_v (C(\II))$. The $ C^*$-algebra $\Tt C^*_{\II}(\Cc, \Sigma)$ is the universal $C^*$-algebra generated by a $\Sigma$-twisted representation $(\mathfrak{t}, \frakCImap)$ in the sense that $\Tt C^*_{\II}(\Cc, \Sigma) = C^*(\mathfrak{t}, \frakCImap)$ and if $(S,\CImap)$ is a  $\Sigma$-twisted representation in a $C^*$-algebra $A$, then there is a unique $*$-homomorphism $\Phi \colon \Tt C^*_{\II}(\Cc, \Sigma) \to A$ such that $S = \Phi \circ \mathfrak{t}$ and $\CImap_v = \Phi \circ \frakCImap_v$ for all $v \in \Cc^0$. The $C^*$-algebra $C^*_{\II}(\Cc, \Sigma)$ is the universal $C^*$-algebra generated by a covariant $\Sigma$-twisted representation $(\mathfrak{s},\frakCImap)$.
\end{dfn}

\begin{rmk}
	We chose the letters $\CImap$ and $\frakCImap$ for the collections of maps in the preceding definition because ``$z$'' suggests central elements---see \labelcref{lem:C(I)-algebra}. Although $\frakCImap$ denotes the maps into both $\Tt C^*_{\II}(\Cc, \Sigma)$ and $C^*_{\II}(\Cc, \Sigma)$ the different notation ($\mathfrak{t}$ versus $\mathfrak{s}$) for the relevant families of partial isometries should prevent confusion.
\end{rmk}

\begin{rmk}\label{rmk:homotopies normalised}
	Suppose that $\Cc$ is a finitely aligned left-cancellative small category and let $\Sigma$ be a homotopy of $2$-cocycles for $\Cc$. Since we insist that all $2$-cocycles are normalised (\labelcref{rmk:coycles normalised}), we have $\Sigma_{\bullet}(r(c), c) = 1_\II = \Sigma_{\bullet}(c, s(c))$ for all $c$. Hence \labelcref{itm:cfc-mult} implies that if $(S, \CImap)$ is a $\Sigma$-twisted representation of $\Cc$ then $S_{r(c)} S_c = S_c = S_c S_{s(c)}$ for all $c \in \Cc$.
\end{rmk}

\begin{prop}[{cf.~\cite[Proposition~6.7]{SpiPath}}]\label[prop]{lem:spanning_set}
	Let $(S,\CImap)$ be a $\Sigma$-twisted representation of a finitely aligned left-cancellative small category $\Cc$ in a $C^*$-algebra $A$. Let $\Pp \coloneqq \{ \prod_{x \in F} S_xS_x^* \mid F \subseteq \Cc \text{ finite}\}$. Then
	\[
	C^*(S,\CImap) = \clsp\{ \CImap_{r(c)} (f)  S_{c}   S_{d}^* P \mid s(c) = s(d), f \in C(\II), P \in \Pp\}.
	\]
\end{prop}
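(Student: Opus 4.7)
The plan is to prove the spanning statement by showing that the closed linear span
\[
B \coloneqq \clsp\{\CImap_{r(c)}(f) S_c S_d^* P : s(c)=s(d),\ f \in C(\II),\ P \in \Pp\}
\]
is a $*$-subalgebra of $A$ containing every generator of $C^*(S, \CImap)$. The inclusion of generators is immediate: the identities of \cref{rmk:homotopies normalised} together with $\CImap_v(1_\II)=S_v$ from~\labelcref{itm:cfc-one} yield $S_c = \CImap_{r(c)}(1_\II)\, S_c\, S_{s(c)}^* (S_{s(c)} S_{s(c)}^*) \in B$ and $\CImap_v(f) = \CImap_v(f)\, S_v\, S_v^* (S_v S_v^*) \in B$.

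For closure under adjoints, the key observation is that any two projections $S_xS_x^*$ and $S_yS_y^*$ commute, since by~\labelcref{itm:cf-range} their product $\bigvee_{e \in x \vee y} S_eS_e^*$ is self-adjoint. Moreover, each $S_xS_x^*$ commutes with every $\CImap_v(f)$: if $v = r(x)$ this follows from applying~\labelcref{itm:cfc-commute} to both $S_x$ and $S_x^*$; if $v \ne r(x)$ the products vanish on both sides because $S_xS_x^* \le S_{r(x)} = \CImap_{r(x)}(1_\II)$ and the images of $\CImap_v$ and $\CImap_{r(x)}$ are mutually orthogonal. Consequently every $P \in \Pp$ is self-adjoint and commutes with each $\CImap_v(f)$, and shuffling $\CImap_{r(c)}(\bar f)$ leftward through $S_c^*$ and $S_d$ (using~\labelcref{itm:cfc-commute}) gives $(\CImap_{r(c)}(f) S_c S_d^* P)^* = \CImap_{r(d)}(\bar f) S_d S_c^* P \in B$.

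The substance of the proof is closure under products. For $X = \CImap_{r(c)}(f) S_c S_d^* P$ and $Y = \CImap_{r(c')}(f') S_{c'} S_{d'}^* P'$, I would first move $\CImap_{r(c')}(f')$ rightward through $P$, through $S_{c'}$ via~\labelcref{itm:cfc-commute}, and through $S_{d'}^*$ via its adjoint (using $s(c')=s(d')$), obtaining $XY = \CImap_{r(c)}(f)\, S_c\,(S_d^* P S_{c'})\,S_{d'}^*\, \CImap_{r(d')}(f')\,P'$. Assuming $s(d)=r(c')$ (the product vanishes otherwise), the crux is to expand
\[
S_d^* P S_{c'} = S_d^* \bigl(S_d S_d^* \cdot P \cdot S_{c'} S_{c'}^*\bigr) S_{c'}.
\]
The bracketed expression is a product of projections lying in $\Pp$, and iterated use of~\labelcref{itm:cf-range} together with the inclusion-exclusion formula~\labelcref{eq:sum_product_formula} rewrites it as a linear combination of single projections $S_eS_e^*$ with $e$ ranging over a finite independent subset of $d\Cc \cap c'\Cc$. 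Left-cancellativity yields unique $a_e,b_e\in\Cc$ with $e = d a_e = c' b_e$, and substituting $S_e = \CImap_{r(d)}(\Sigma_\bullet(d,a_e))^* S_d S_{a_e}$ from~\labelcref{itm:cfc-mult}, cancelling $S_d^*S_d = S_{s(d)}$ and $S_{c'}^*S_{c'}=S_{s(c')}$, and shuffling the $\CImap$-factors through the $S$'s via~\labelcref{itm:cfc-commute}, produces $S_d^* S_e S_e^* S_{c'} = \CImap_{s(d)}(h_e) S_{a_e} S_{b_e}^*$ for an explicit $h_e \in C(\II)$ built from $\Sigma_\bullet(d,a_e)$ and $\Sigma_\bullet(c',b_e)$. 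Plugging back, applying~\labelcref{itm:cfc-mult} to combine $S_c S_{a_e}$ and $S_{b_e}^* S_{d'}^*$ into single-generator terms, and collecting all $\CImap$-factors on the extreme left, realises $XY$ as a finite linear combination of elements of $B$.

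The main obstacle is exactly this middle computation: because $P$ is an arbitrary finite product of range projections sandwiched between $S_d^*$ and $S_{c'}$, it cannot be moved past either factor and must be absorbed into the range relation~\labelcref{itm:cf-range} itself. Once this step is carried out, the remaining rearrangements of the continuous $\TT$-valued cocycle functions in $C(\II)$ via~\labelcref{itm:cfc-commute} are tedious but purely mechanical.
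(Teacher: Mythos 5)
Your overall strategy (show the closed span is a $*$-subalgebra containing the generators) matches the paper's, and the generator-containment step and the bookkeeping with the $\CImap$-factors are essentially right. But the crux of your closure-under-products argument has a genuine gap. You claim that the product of range projections $S_dS_d^*\cdot P\cdot S_{c'}S_{c'}^*\in\Pp$ can be rewritten, by iterated use of \labelcref{itm:cf-range} together with \labelcref{eq:sum_product_formula}, as a finite \emph{linear combination of single projections} $S_eS_e^*$. This is not justified, and it fails in a general finitely aligned left-cancellative small category: \labelcref{itm:cf-range} turns a product of two range projections into $\bigvee_{e\in F}S_eS_e^*$, and \labelcref{eq:sum_product_formula} expands that join into an alternating sum whose higher-order terms are again \emph{products} of range projections; reducing those reintroduces joins indexed by deeper independent sets, and there is no well-founded quantity forcing the recursion to terminate. (It does terminate for $k$-graphs, where distinct minimal common extensions have disjoint principal ideals, but that is a special feature.) Indeed, if your reduction were valid, the factor $P\in\Pp$ could simply be deleted from the spanning family in the statement of \cref{lem:spanning_set}; the fact that it cannot is exactly why Spielberg's formulation, and the paper's, carry the extra projection.

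The paper avoids this by never reducing products of projections to single ones: its key claim is $S_b^*S_c=\sum_i S_{u_i}S_{v_i}^*P_i$ with the residual products $P_i\in\Pp$ \emph{retained}. This comes from the telescoping identity $\bigvee_{i=1}^k Q_i=\sum_{i=1}^k Q_i\big(1-\bigvee_{j<i}Q_j\big)$ applied to $Q_i=S_{x_i}S_{x_i}^*$ for an independent set $\{x_i\}$ generating $b\,\Cc\cap c\,\Cc$, together with the computations $S_b^*S_{x_i}S_{x_i}^*S_c=S_{b_i}S_{c_i}^*$ and $S_c^*S_{x_j}S_{x_j}^*S_c=S_{c_j}S_{c_j}^*$ where $x_i=bb_i=cc_i$; the leftover factors $\prod_l S_{c_{r_l}}S_{c_{r_l}}^*$ are precisely the $P_i$, and are why $\Pp$ appears in the spanning set at all. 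A smaller but related slip occurs in your adjoint computation: $(\CImap_{r(c)}(f)S_cS_d^*P)^*=P\,S_dS_c^*\,\CImap_{r(c)}(f)^*$ has $P$ on the \emph{left}, and $P$ does not commute with $S_dS_c^*$; moving it back to the right requires the same machinery (the paper handles this by first proving that all monomials in the $S_c$ and $S_d^*$ lie in the span, so that the order of the factors is immaterial).
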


\begin{proof} We follow the approach of~\cite[Proposition~6.7]{SpiPath}. Because the projections $S_x S_x^*$ for $x \in \Cc$ pairwise commute, $\Pp$ consists of projections and is closed under multiplication. Let $L \coloneqq \lsp \{ \CImap_{r(c)} (f)  S_{c}  S_{d}^* P \mid s(c) = s(d), f \in C(\II), P \in \Pp\}$. Then $L$ is a linear subspace of $C^*(S,\CImap)$, which by \labelcref{itm:cfc-one} and \labelcref{itm:cfc-mult} of \cref{dfn:twisted_lscs_algebra_homotopy} contains the generators of $C^*(S,\CImap)$ and their adjoints.
	
	It suffices to show that $L$ is $*$-closed and closed under multiplication. Let $\Mm$ denote the set $\{S_{c_1}S_{d_1}^* \cdots S_{c_k}S_{d_k}^* \mid c_i,d_i \in \Cc, k \ge 1\}$ of monomials in the $S_c$ and their adjoints.
	We first show that $\Mm \subseteq L$. We will say that a monomial $S_{c_1}S_{d_1}^* \cdots S_{c_k}S_{d_k}^*$ has length $k$.
	
	Monomials of length one lie in $L$ since each $ s_{r(c)}= \CImap_{r(c)}(1_{\II})$ and each $S_{r(d)} = \prod_{x \in \{r(d)\}} S_x S_x^*$.
	Fix a nonzero monomial $S_a S_b^* S_c S_d^*$ of length two.
	We claim that there exist $u_i,v_i \in \Cc$, and $P_i \in \Pp$ such that
	\[
	S_b^*S_c = \sum_{i}  S_{u_i} S_{v_i}^* P_i .
	\]
	Let $F$ be a finite independent set generating $b\sCc \cap c\sCc$, and write $F = \{x_1,\ldots, x_k\}$. Then
	\begin{equation}\label{eq:b_c_sandwich}
		S_b^* S_c = S_b^* S_b S_b^* S_c S_c^* S_c = S_b^* \Big(\bigvee_{i=1}^k S_{x_i} S_{x_i}^*\Big) S_c.
	\end{equation}
	
	Given commuting projections $Q_1,\ldots,Q_k$ we have
	\[
	\bigvee_{i=1}^k Q_i = Q_1 + Q_2(1-Q_1) + Q_3(1 - Q_1 \vee Q_2) + \cdots + Q_k\Big(1 - \bigvee_{j=1}^{k-1}Q_j \Big).
	\]
	Applying this with $Q_i = S_{x_i}S_{x_i}^*$ gives
	\begin{align}\label{eq:inclusion_exclusion}
		\begin{split}
			\bigvee_{i=1}^k S_{x_i} S_{x_i}^*
			&= \sum_{i = 1}^k S_{x_i} S_{x_i}^* \Big(1 - \bigvee_{j=1}^{i-1} S_{x_j}S_{x_j}^*  \Big)\\
			&= \sum_{i = 1}^k S_{x_i} S_{x_i}^* \Big(1 - \sum_{j=1}^{i-1} (-1)^{j-1} \sum_{1 \le r_1 < \cdots < r_{j} <  i}  S_{x_{r_1}} S_{x_{r_1}}^* \cdots S_{x_{r_j}} S_{x_{r_j}}^* \Big).
		\end{split}
	\end{align}
	Since each $x_i \in F \subseteq b\,\Cc \cap c\,\Cc$, for each $i$ there exist $b_i,c_i \in \Cc$ such that $x_i = b b_i = c c_i$. So,
	\begin{align*}
		S_b^* S_{x_i} S_{x_i}^* S_c &= S_b^* S_{b} S_{b_i} S_{c_i}^* S_c^* S_c = S_{b_i} S_{c_i}^* \in L \quad \text{and}\\
		S_c^* S_{x_i} S_{x_i}^* S_c &= S_c^* S_{c} S_{c_i} S_{c_i}^* S_c^* S_c = S_{c_i} S_{c_i}^* \in L.
	\end{align*}
	Since each $x_{r_j} \in c \sCc$ we have $S_{x_{r_j}}S_{x_{r_j}}^* \le S_c S_c^*$, so
	\begin{align*}
		S_b^*S_{x_{r_1}} S_{x_{r_1}}^* \cdots S_{x_{r_j}} S_{x_{r_j}}^* S_c
		&= S_b^*S_{x_{r_1}} S_{x_{r_1}}^* S_c \Big(\prod_{l=2}^j S_c^* S_{x_{r_l}}S_{x_{r_l}}^* S_c\Big)\\
		&=  S_{b_{r_1}} S_{c_{r_1}}^* \Big(\prod_{l=2}^j S_{c_{r_l}}S_{c_{r_l}}^*\Big).
	\end{align*}
	Hence, \labelcref{eq:b_c_sandwich} and \labelcref{eq:inclusion_exclusion} give the claim.
	
	By the claim, there exist $u_i,v_i \in \Cc$, and $P_i \in \Pp$ such that
	\[
	S_aS_b^*S_c S_d^*= \sum_{i} S_a  S_{u_i} S_{v_i}^* P_i S_d^*.
	\]
	If $r(x) = s(d)$ then $S_d S_x S_x^* = S_d S_x S_x^* S_d^* S_d = S_{dx} S_{dx}^* S_{d}$. In particular, for each $i$ there exists $P_i' \in \Pp$ such that $P_i S_d^* = S_d^* P_i'$. Hence, by \labelcref{itm:cfc-mult},
	\[
	S_a  S_{u_i} S_{v_i}^* P_i  S_d^*
	= \CImap_{r(a)}(\Sigma_{\bullet}(a,u_i)\ol{\Sigma_{\bullet}(d,v_i)}) S_{a u_i} S_{d v_i}^* P_i' \in L
	\]
	for each $i$ and so $S_aS_b^*S_c S_d^* \in L$.
	
	Now, suppose inductively that every monomial of length $k$ belongs to $L$. By the inductive hypothesis there exist $a_i,b_i \in \Cc$, $f_i \in C(\II)$ and $P_i \in \Pp$ such that
	\begin{align*}
		\textstyle
		S_{c_1}S_{d_1}^* \cdots S_{c_{k+1}}S_{d_{k+1}}^*
		&= S_{c_1}S_{d_1}^* \sum_{i} \CImap_{r(a_i)}(f_i)  S_{a_i}  S_{b_i}^* P_i\\
		&= \sum_{i} \CImap_{r(c_1)}(f_i) S_{c_1}S_{d_1}^* S_{a_i} S_{b_i}^* P_i.
	\end{align*}
	Since monomials of length two belong to $L$ we have now established that $\Mm \subseteq L$.
	
	The set $\Mm$ is closed under multiplication and $\Pp \subseteq \Mm$ so $L$ contains all elements of the form $\CImap_{r(c_1)}(f)S_{c_1}S_{d_1}^* \cdots S_{c_k}S_{d_k}^*P$. To see that $L$ is closed under multiplication fix spanning elements $\CImap_{r(c_1)}(f_1)S_{c_1} S_{d_1}^* P_1$ and $\CImap_{r(c_2)}(f_2)S_{c_2} S_{d_2}^* P_2$. Applying \labelcref{itm:cfc-commute} twice gives
	\[
	\CImap_{r(c_1)}(f_1)S_{c_1} S_{d_1}^* P_1 \CImap_{r(c_2)}(f_2)S_{c_2} S_{d_2}^* P_2
	=
	\CImap_{r(c_1)}(f_1f_2)S_{c_1} S_{d_1}^* P_1 S_{c_2} S_{d_2}^* P_2 \in L.
	\]
	So $L$ is closed under multiplication. To see that it is $*$-closed note that
	repeated application of \labelcref{itm:cfc-commute} gives
	\[
	(\CImap_{r(c_1)}(f_1)S_{c_1} S_{d_1}^* P_1)^* = \CImap_{r(c_1)}(f_1^*)P_1^* S_{d_1} S_{c_1}^* \in L. \qedhere
	\]
\end{proof}

\begin{cor}\label[cor]{cor:approx_id}
	Let $(S,\CImap)$ be a $\Sigma$-twisted representation of a finitely aligned left-cancellative small category $\Cc$ in a $C^*$-algebra $A$. The net \[\Big(\sum_{v \in F} S_v\Big)_{F \subseteq \Cc^0\text{ finite}} = \Big(\sum_{v \in F} \CImap_v(1_\II)\Big)_{F \subseteq \Cc^0\text{ finite}}\] is an approximate identity for $C^*(S,\CImap)$.
\end{cor}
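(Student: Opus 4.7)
My plan is to combine the dense spanning set of \cref{lem:spanning_set} with a standard approximation argument. First I observe that since $S_v = \CImap_v(1_\II)$ for each $v \in \Cc^0$ and the images $\CImap_v(C(\II))$ are pairwise orthogonal, the family $\{S_v\}_{v \in \Cc^0}$ consists of mutually orthogonal projections. Consequently $E_F \coloneqq \sum_{v \in F} S_v$ is a projection for each finite $F \subseteq \Cc^0$, and $F \mapsto E_F$ is an increasing net of contractions in $C^*(S, \CImap)$. It therefore suffices to show that every spanning element $a = \CImap_{r(c)}(f) S_c S_d^* P$ from \cref{lem:spanning_set} satisfies $E_F a = a = a E_F$ for all sufficiently large finite $F$.

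For the left action, orthogonality of the images $\CImap_v(C(\II))$ will immediately give $S_v \CImap_{r(c)}(f) = \delta_{v, r(c)} \CImap_{r(c)}(f)$, so that $S_v a = \delta_{v, r(c)} a$ and hence $E_F a = a$ whenever $r(c) \in F$. The more delicate step, and the main obstacle, is the right action: I need to locate a vertex $v_a$ such that $a S_w = \delta_{w, v_a} a$ for all $w \in \Cc^0$. Writing $P = \prod_{x \in G} S_x S_x^*$, the identity $S_{r(x)} S_x = S_x$ from \cref{rmk:homotopies normalised} yields $S_x S_x^* \leq S_{r(x)}$ for each $x \in G$; combined with the pairwise orthogonality of $\{S_v\}_{v \in \Cc^0}$, this forces $P = 0$ unless the ranges $r(x)$ for $x \in G$ all coincide, in which case $P \leq S_{v_a}$ for their common value $v_a$. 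The degenerate case $G = \varnothing$ is handled by the identity $S_c S_d^* = S_c S_d^* S_{r(d)}$, which lets us always take $v_a = r(d)$. In every case $P S_w = \delta_{w, v_a} P$, so $a S_w = \delta_{w, v_a} a$, and hence $a E_F = a$ whenever $v_a \in F$.

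Combining the two halves, $E_F a = a = a E_F$ whenever $F \supseteq \{r(c), v_a\}$. Given $b \in C^*(S, \CImap)$ and $\epsilon > 0$, I will approximate $b$ by a finite linear combination $b'$ of such spanning elements with $\|b - b'\| < \epsilon/2$, and let $F_0$ be the finite collection of associated vertices $r(c_i), v_{a_i}$. Then for every $F \supseteq F_0$, contractivity of $E_F$ together with $E_F b' = b' = b' E_F$ gives $\|E_F b - b\| \leq \|b - b'\| + \|b - b'\| < \epsilon$, and symmetrically on the right, completing the argument.
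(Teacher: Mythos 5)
Your proposal is correct and follows essentially the same route as the paper: approximate a general element by a finite linear combination of the spanning elements from \cref{lem:spanning_set}, observe that $E_F$ fixes these for $F$ large enough, and conclude with the standard $\varepsilon/2$ estimate using contractivity of the projections $E_F$. Your explicit analysis of the right multiplication via the projection $P$ and the vertex $v_a$ is fine but avoidable: since each $E_F$ is self-adjoint, $\|bE_F - b\| = \|E_F b^* - b^*\|$, so left convergence already gives right convergence, which is essentially how the paper disposes of that side.
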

\begin{proof}
	Fix $a \in C^*(S, \CImap)$ and $\varepsilon > 0$. By \cref{lem:spanning_set} there exist a finite $F \subseteq \Cc^0$ and $a_0 \in \lsp\{\CImap_{r(c)} (g) S_{c} S_{d}^* P \mid r(c) \in F, s(c) = s(d), g \in C(\II), P \in \Pp\}$ such that $\|a - a_0\| < \frac{\varepsilon}{2\|f\|}$. If $v \in \Cc^0 \setminus F$, then $S_v a_0 = 0$. Fix a finite $G \subseteq \Cc^0$ such that $F \subseteq G$. Since the $S_v$ are mutually orthogonal, $\sum_{v \in G} S_v$ is a projection, and since $a_0 \in \lsp\{\CImap_{r(c)} (g) S_{c} S_{d}^* P \mid r(c) \in F, s(c) = s(d), g \in C(\II), P \in \Pp\}$, we have $\sum_{v \in G} S_v a_0 = a_0$. Using these facts at the second and third inequalities, we calculate:
	\begin{align*}
		\Big\| \sum_{v \in G} S_v a - a \Big\|
		&\le \Big\| \sum_{v \in G} S_v (a - a_0) \Big\| + \Big\| \big(\sum_{v \in G} S_v a_0\big) - a \Big\| \\
		&\le \Big\| \sum_{v \in G} S_v \Big\| \| a - a_0 \| + \|a_0 - a\|
		\le 2 \| a - a_0\| = \varepsilon.
	\end{align*}
	Hence, $\sum_v S_v$ converges strictly to the identity in $\Mm(C^*(S,\CImap))$.
\end{proof}

\begin{cor}
	\label[cor]{lem:C(I)-algebra}
	Let $(S,\CImap)$ be a $\Sigma$-twisted representation of a finitely aligned left-cancellative small category $\Cc$ in a $C^*$-algebra $A$. Then there is a unique $*$-homomorphism $\hatCImap$ from $C(\II)$ to the centre $\Zz \Mm(C^*(S,\CImap))$ of the multiplier algebra of $C^*(S, \CImap)$ satisfying
	$
	\hatCImap(f) S_v = \CImap_v(f)
	$
	for all $v \in \Cc^0$ and $f \in C(\II)$. This homomorphism is unital and makes $C^*(S,\CImap)$ into a $C(\II)$-algebra.
\end{cor}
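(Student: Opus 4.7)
The plan is to construct $\hatCImap(f)$, for each $f \in C(\II)$, as the strict limit in $\Mm(C^*(S, \CImap))$ of the net of partial sums $\hatCImap_F(f) \coloneqq \sum_{v \in F} \CImap_v(f)$ indexed by finite $F \subseteq \Cc^0$, and then verify the required algebraic properties by checking them on the generators and extending via the approximate identity $\big(\sum_{v \in F} S_v\big)_F$ of \cref{cor:approx_id}.

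The principal technical step is to show that $(\hatCImap_F(f))_F$ is strictly Cauchy in $\Mm(C^*(S, \CImap))$. The key observation is that \labelcref{itm:cfc-one} combined with the mutual orthogonality of the $\CImap_v$'s gives $\CImap_v(f) = S_v\CImap_v(f) = \CImap_v(f) S_v$ and $\CImap_v(f)^*\CImap_w(g) = 0 = \CImap_v(f)\CImap_w(g)^*$ whenever $v \neq w$. Using this orthogonality together with the $C^*$-identity and the inequality $|f|^2 \le \|f\|^2 \cdot 1_\II$ in $C(\II)$, for any $a \in C^*(S, \CImap)$ and finite $F_1 \subseteq F_2 \subseteq \Cc^0$ I get
\begin{equation*}
    \big\|(\hatCImap_{F_2}(f) - \hatCImap_{F_1}(f)) a\big\|^2 = \Big\|a^* \sum_{v \in F_2 \setminus F_1} \CImap_v(|f|^2)\, a\Big\| \le \|f\|^2 \Big\|a^* \Big(\sum_{v \in F_2 \setminus F_1} S_v\Big) a\Big\|,
\end{equation*}
which tends to zero by \cref{cor:approx_id}; a symmetric argument handles right multiplication. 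Hence $\hatCImap(f) \in \Mm(C^*(S, \CImap))$ is well defined, and the identity $\hatCImap(f) S_v = \CImap_v(f)$ follows immediately from the mutual orthogonality.

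With $\hatCImap$ in hand, linearity and $*$-preservation pass to the strict limit. For multiplicativity and unitality I would use
\begin{equation*}
    \hatCImap(f)\hatCImap(g) S_v = \CImap_v(f)\CImap_v(g) = \CImap_v(fg) = \hatCImap(fg) S_v, \qquad \hatCImap(1_\II) S_v = S_v,
\end{equation*}
for every $v$ together with the approximate-identity property. For centrality, since $C^*(S, \CImap)$ is generated by the $S_c$ and the ranges of the $\CImap_v$, it suffices to observe that by \cref{rmk:homotopies normalised} and \labelcref{itm:cfc-commute},
\begin{equation*}
    \hatCImap(f) S_c = \hatCImap(f) S_{r(c)} S_c = \CImap_{r(c)}(f) S_c = S_c \CImap_{s(c)}(f) = S_c \hatCImap(f),
\end{equation*}
while commutation with each $\CImap_v(g)$ is immediate from commutativity of $C(\II)$ and mutual orthogonality. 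Uniqueness follows from $\hatCImap(f) S_v = \CImap_v(f)$ together with the approximate-identity property, and unitality of $\hatCImap$ delivers the $C(\II)$-algebra structure. The main obstacle is the strict convergence step; everything else reduces to verifying identities on generators and one passage through the approximate identity.
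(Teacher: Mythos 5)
Your proposal is correct and follows essentially the same route as the paper: define $\hatCImap(f)$ as the strict limit of the partial sums $\sum_{v\in F}\CImap_v(f)$, reduce strict Cauchyness to \cref{cor:approx_id}, and verify the algebraic identities on generators via the approximate identity. The only (cosmetic) difference is in the Cauchy estimate, where you use the $C^*$-identity and the positivity bound $\CImap_v(|f|^2)\le\|f\|^2 S_v$ while the paper factors the tail as $\sum_{v}\CImap_v(f)\sum_{w}S_w a$ and invokes the direct-sum norm bound $\|\sum_{v\in H}\CImap_v(f)\|\le\|f\|$; both are valid.
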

\begin{proof}
	Fix $f \in C(\II)$. We show that $\sum_v \CImap_v(f)$ converges strictly to a multiplier $\hatCImap(f)$. If $f = 0$ this is trivial, so suppose $f \not= 0$. Fix $a \in C^*(S,\CImap)$ and $\varepsilon > 0$. By \cref{cor:approx_id}, there is a finite $F \subseteq \Cc^0$ such that for all finite $G \subseteq \Cc^0$ containing $F$, we have $\big\|\sum_{v \in G\setminus F} S_v a\big\| < \frac{\varepsilon}{2\|f\|}$. Since $\CImap_v(f)S_w = 0$ if $v \not= w$, it follows that for finite $G \subseteq \Cc^0$ containing $F$, we have $\|\sum_{v \in G \setminus F} \CImap_v(f) a\| = \|\sum_{v \in G \setminus F} \CImap_v(f) \sum_{w \in G \setminus F} S_w a\|$. Since $\CImap_v(C(\II))\CImap_w(C(\II)) = \{0\}$ for $v \ne w$, for any finite $H \subseteq \Cc^{0}$ the universal property of direct sums gives a $*$-homomorphism $\oplus_H \CImap_v \colon \bigoplus_{v \in H} C(\II) \to C^*(S,\CImap)$ such that $(\oplus_H \CImap_v)((f_v)_{v \in H}) = \sum_{v \in H} \CImap_v(f_v)$. Hence, $\|\sum_{v \in H} \CImap_v(f_v)\| \le \|(f_v)_{v \in H}\| = \sup_{v \in H}\|f_v\|$.
	In particular, if $f \in C(\II)$ we have $\|\sum_{v \in H} \CImap_v(f)\| \le \|f\|$. Thus
	\begin{align*}
		\Big\|\sum_{v \in G \setminus F} \CImap_v(f) \sum_{w \in G \setminus F} S_w a\Big\| 
		&\le \Big\|\sum_{v \in G \setminus F} \CImap_v(f)\Big\|\,\Big\|\sum_{w \in G \setminus F} S_w a\Big\|\\
		&< \|f\| \frac{\varepsilon}{2\|f\|} = \frac{\varepsilon}{2}.
	\end{align*}
	Hence, the standard $\varepsilon/2$ argument shows that the net $(\sum_{v \in F} \CImap_v(f) a)_{F\subseteq \Cc^0\text{ finite}}$ is Cauchy.
	So $(\sum_{v \in F} \CImap_v(f))_{F\subseteq \Cc^0\text{ finite}}$ converges strictly in $\Mm(C^*(S,\CImap))$ to a multiplier $\hatCImap(f)$. Using \labelcref{itm:cfc-one} (of \cref{dfn:twisted_lscs_algebra_homotopy}) and mutual orthogonality of the $S_v$ we have
	\begin{align}\label{eq:tauv_times_vertex_projection}
		\begin{split}
			\hatCImap(f)S_v
			&= \Big(\lim_F \sum_{w \in F} \CImap_w(f) \Big) S_v
			= \Big(\lim_F \sum_{w \in F} \CImap_w(f) S_w \Big) S_v\\
			&= \lim_F \CImap_v(f)S_v
			= \CImap_v(f).
		\end{split}
	\end{align}
	Since each $\CImap_v$ is a $*$-homomorphism so is $\hatCImap \colon f \mapsto \hatCImap(f)$.
	
	To see that $\hatCImap(C(\II))$ is central, fix $f \in C(\II)$. Applying~\eqref{eq:tauv_times_vertex_projection} to $f^*$ and taking adjoints
	gives $S_v \hatCImap(f) = \CImap_v(f)$. So for $c \in \Cc$, relation~\labelcref{itm:cfc-commute} gives
	\[
	\hatCImap(f) S_c = \hatCImap(f) S_{r(c)} S_c = \CImap_{r(c)}(f) S_c = S_c \CImap_{s(c)}(f) = S_c S_{s(c)} \hatCImap(f) = S_c \hatCImap(f).
	\]
	Applying this to $f^*$ and taking adjoints shows that $\hatCImap(f)S^*_c = S^*_c \hatCImap(f)$ as well. For $v \in \Cc^0$ and $g \in C(\II)$, we
	have $\hatCImap(f) \CImap_v(g) = \hatCImap(f) S_v \CImap_v(g) = \CImap_v(fg) = \CImap_v(gf) = \CImap_v(g)S_v \hatCImap(f) = \CImap_v(g) \hatCImap(f)$.
	So $\hatCImap(f)$ commutes with all of the generators
	of $C^*(S,\CImap)$. Since $f \in C(\II)$ was arbitrary, it follows from strict continuity that $\hatCImap(C(\II))$ is central in $\Mm(C^*(S,\CImap))$.
	
	We have $\hatCImap(1_{\II}) = \sum_v \CImap_v(1_{\II}) = \sum_{v} S_v$, so \cref{cor:approx_id} says that $\hatCImap$ is unital.	
	Since a multiplier on a $C^*$-algebra is determined uniquely by its action on any approximate identity, and in particular on the approximate identity of \cref{cor:approx_id}, $\hatCImap$ is the unique $*$-homomorphism satisfying the desired relation.
\end{proof}

\begin{rmk}\label[rmk]{rmk:I-relation_relationships}
	In \cref{prop:relation_relationships} we established that our definition of a covariant $\sigma$-twisted representation of $\Lambda \bowtie \Gg$ matches the notion of a Cuntz--Krieger $(\Gg,\Lambda; \sigma)$ representation as in \cite{MS?}. It also allows us to relate covariant $\Sigma$-twisted representations of $\Lambda \bowtie \Gg$ with the relations \labelcref{itm:TCK3}~and~\labelcref{itm:CK}. Suppose that $(\Gg, \Lambda)$ is a self-similar action of a groupoid on a row-finite $k$-graph with no sources and that $\Sigma$ is a homotopy of $2$-cocycles on $\Lambda \bowtie \Gg$. Thinking of $\II$ as a category consisting solely of objects, and of $\II \times (\Lambda \bowtie \Gg)^2$ as the composable pairs in the product category $\II \times (\Lambda \bowtie \Gg)$, the map $\Sigma$ is a unitary-valued function into $C(\II)$, so $\omega \coloneqq \hatCImap \circ \Sigma$ given by \cref{lem:C(I)-algebra} is of the form needed to apply \cref{prop:relation_relationships}. Consequently every $\Sigma$-twisted representation $S$ of $\Lambda \bowtie \Gg$ satisfies~\labelcref{itm:TCK3} and every covariant $\Sigma$-twisted representation $S$ satisfies~\labelcref{itm:CK}. This also allows us to use the following alternative form of~\labelcref{itm:TCK3} for $\Sigma$-twisted representations: if $\mu,\nu \in \Lambda$ then
	\begin{align}
		S^*_\mu S_\nu
		&= S^*_\mu S_\mu S^*_\mu S_\nu S^*_\nu S_\nu \nonumber\\
		&= S^*_\mu \sum_{\mu\alpha = \nu\beta \in \MCE(\mu,\nu)}S_{\mu\alpha} S^*_{\nu\beta} S_\nu \nonumber\\
		&= \sum_{\mu\alpha = \nu\beta \in \MCE(\mu,\nu)} \hatCImap(\Sigma_{\bullet}(\mu,\alpha))^*\hatCImap(\Sigma_{\bullet}(\nu,\beta)) S^*_\mu S_\mu S_\alpha S^*_\beta S^*_\nu S_\nu \nonumber\\
		&= \sum_{\mu\alpha = \nu\beta \in \MCE(\mu,\nu)} \hatCImap(\Sigma_{\bullet}(\mu,\alpha)^*\Sigma_{\bullet}(\nu,\beta)) S_\alpha S^*_\beta.\label{eq:alternate_mult}
	\end{align}
\end{rmk}

\begin{cor}\label[cor]{cor:spanning_set_ssa}
	Fix $k \ge 0$. Suppose that $\Gg$ is a countable discrete groupoid acting self-similarly on a finitely aligned $k$-graph $\Lambda$, and let $\Sigma$ be a homotopy of $2$-cocycles on $\Lambda \bowtie \Gg$. Fix a $\Sigma$-twisted representation $(S,\CImap)$ of $\Lambda \bowtie \Gg$.
	Then
	\[
	C^*(S,\CImap) = \clsp \{\hatCImap(f) S_{\lambda} S_g S_{\mu}^* \mid \lambda,\mu  \in \Lambda, g \in s(\lambda) \Gg s(\mu), f\in C(\II)\}.
	\]
\end{cor}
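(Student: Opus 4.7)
The plan is to refine the general spanning description of \cref{lem:spanning_set} using three structural features of $\Cc = \Lambda \bowtie \Gg$: unique Zappa--Sz\'ep factorisation, invertibility of elements of $\Gg$, and the formula \labelcref{eq:range_prod_sum} (via \cref{rmk:I-relation_relationships}). Throughout I will use \cref{lem:C(I)-algebra} to rewrite $\CImap_{r(c)}(f) = \hatCImap(f) S_{r(c)}$ and to move the central elements $\hatCImap(g)$ freely past any generator. For brevity, I will write ``$[\text{central}]$'' for any product of elements of the form $\hatCImap(\Sigma_\bullet(\cdot,\cdot))$ or its adjoint arising from applications of~\labelcref{itm:cfc-mult}; these are unitaries in $\Zz\Mm(C^*(S,\CImap))$, so they absorb into the $\hatCImap(f)$ coefficient at the end.

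The one inclusion ``$\supseteq$'' is immediate: given $\lambda,\mu\in\Lambda$ and $g\in s(\lambda)\Gg s(\mu)$, write $\hatCImap(f)S_\lambda S_g S_\mu^* = [\text{central}]\hatCImap(f) S_{\lambda g} S_\mu^*$ and then use $S_\mu^* = S_\mu^* S_{s(\mu)}$ with $S_{s(\mu)} = S_{s(\mu)}S_{s(\mu)}^* \in \Pp$ to land in the form $\CImap_{r(\lambda g)}(f')S_{\lambda g}S_\mu^* P$ of \cref{lem:spanning_set}. For the reverse inclusion, I take a spanning element $\CImap_{r(c)}(f)S_c S_d^* P$ with $s(c)=s(d)$ and $P = \prod_{x\in F} S_x S_x^*$, and proceed in four steps.

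First, factor $c = \lambda_1 g_1$ and $d = \mu_1 h_1$ uniquely. Applying~\labelcref{itm:cfc-mult} gives $S_c = [\text{central}]S_{\lambda_1}S_{g_1}$ and $S_d^* = [\text{central}] S_{h_1}^* S_{\mu_1}^*$. Since $\Gg$ is a groupoid, \cref{lem:twisted_category_algebra_identities}\labelcref{itm:identities_1} applied with $\omega = \hatCImap\circ\Sigma$ yields $S_{h_1}^* = [\text{central}]S_{h_1^{-1}}$, and then $S_{g_1}S_{h_1^{-1}} = [\text{central}]S_{g_1 h_1^{-1}}$. Setting $g' \coloneqq g_1 h_1^{-1}\in s(\lambda_1)\Gg s(\mu_1)$, we obtain $S_c S_d^* = [\text{central}]\,S_{\lambda_1}S_{g'}S_{\mu_1}^*$.

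Second, reduce $P$ to range projections coming from $\Lambda$. For each $x = \alpha_x a_x\in \Lambda\bowtie\Gg$, \cref{lem:twisted_category_algebra_identities}\labelcref{itm:identities_1} gives $S_{a_x}S_{a_x}^* = S_{s(\alpha_x)}$, whence $S_xS_x^* = S_{\alpha_x}S_{\alpha_x}^*$. So $P$ is a product of range projections indexed by elements of $\Lambda$, and since $\Lambda$ is finitely aligned \labelcref{eq:range_prod_sum} lets us rewrite $P$ as a finite sum $\sum_{\alpha\in E} S_\alpha S_\alpha^*$ with $\alpha \in \Lambda$.

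Third, absorb the projections into $S_{\mu_1}^*$ using the identity~\labelcref{eq:alternate_mult}: for each $\alpha\in E$,
\[
S_{\mu_1}^* S_\alpha S_\alpha^* \;=\; \sum_{\lambda\in\MCE(\mu_1,\alpha)} [\text{central}]\, S_{\mu'} S_\lambda^*,
\]
where $\lambda = \mu_1\mu' = \alpha\alpha'$; here I also use $S_{\alpha'}^*S_\alpha^* = (S_\alpha S_{\alpha'})^* = [\text{central}]S_\lambda^*$. Combining with the first step,
\[
S_c S_d^* P \;=\; \sum_{\alpha\in E}\sum_{\lambda\in\MCE(\mu_1,\alpha)} [\text{central}]\, S_{\lambda_1} S_{g'} S_{\mu'} S_\lambda^*.
\]
Fourth, push the groupoid factor $g'$ past $\mu'$ via the Zappa--Sz\'ep factorisation $g'\mu' = (g'\la\mu')(g'\ra\mu')$ in $\Lambda\bowtie\Gg$ (the sources and ranges line up because $s(g') = s(\mu_1) = r(\mu')$), so two applications of~\labelcref{itm:cfc-mult} give $S_{g'}S_{\mu'} = [\text{central}]\,S_{g'\la\mu'}S_{g'\ra\mu'}$. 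Since $g'\la\mu' \in \Lambda$ and $r(g'\la\mu') = r(g') = s(\lambda_1)$, one more application of~\labelcref{itm:cfc-mult} yields $S_{\lambda_1}S_{g'\la\mu'} = [\text{central}]\,S_{\tilde\lambda}$ with $\tilde\lambda \coloneqq \lambda_1(g'\la\mu') \in\Lambda$. The composition $S_{\tilde\lambda} S_{g'\ra\mu'}S_\lambda^*$ makes sense because $r(g'\ra\mu') = s(g'\la\mu') = s(\tilde\lambda)$ and $s(g'\ra\mu') = s(\mu') = s(\lambda)$, so $g'\ra\mu'\in s(\tilde\lambda)\Gg s(\lambda)$. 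Gathering all central factors into the coefficient $\hatCImap(f)$, each summand is of the desired form, completing the ``$\subseteq$'' inclusion.

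The only mildly delicate step is the source/range bookkeeping in the fourth step, where one must use the matched-pair axioms $s(g'\la\mu') = r(g'\ra\mu')$ and that the action of $\Gg$ is degree-preserving in order to guarantee that the composed generators $S_{\tilde\lambda}S_{g'\ra\mu'}S_\lambda^*$ are nonzero and indexed as claimed. Everything else is routine manipulation of the relations~\labelcref{itm:cfc-mult}--\labelcref{itm:cfc-commute}.
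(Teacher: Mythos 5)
Your proposal is correct and follows essentially the same route as the paper's proof: start from the spanning set of \cref{lem:spanning_set}, use the Zappa--Sz\'ep factorisation and \cref{lem:twisted_category_algebra_identities}\labelcref{itm:identities_1} to collapse $S_cS_d^*$ to $S_{\lambda_1}S_{g'}S_{\mu_1}^*$, reduce $P$ to a sum of $\Lambda$-range projections via \labelcref{eq:range_prod_sum}, absorb them into $S_{\mu_1}^*$ via the $\MCE$ structure, and push the groupoid element through using $g'\mu' = (g'\la\mu')(g'\ra\mu')$. The only differences are cosmetic (you reorder the steps and use \labelcref{eq:alternate_mult} where the paper derives the equivalent identity \labelcref{eq:spanning_calc} inline), and your source/range bookkeeping in the final step checks out.
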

\begin{proof}
	Let $\Pp$ be the set of \cref{lem:spanning_set} for $\Cc = \Lambda \bowtie \Gg$. We claim that $\lsp\Pp = \lsp\{S_{\lambda}S_{\lambda}^* \colon \lambda \in \Lambda\}$. We clearly have $\supseteq$. For the reverse, note that for each $c \in \Lambda \bowtie \Gg$ there exist unique $\lambda \in \Lambda$ and $g \in \Gg$ such that $S_{c} = \ol{\hatCImap(\Sigma_{\bullet}(\lambda,g))} S_{\lambda} S_g$. By \cref{lem:twisted_category_algebra_identities}, $S_{c} S_{c}^* = S_{\lambda} S_g S_g^* S_{\lambda}^* = S_{\lambda} S_{\lambda}^*$.
	If $\lambda,\mu \in \Lambda$, then~\labelcref{eq:range_prod_sum} gives
	\begin{align*}
		S_{\lambda}S_{\lambda}^* S_{\mu}S_{\mu}^*
		= \sum_{\nu \in \lambda \vee \mu} S_{\nu}S_{\nu}^*.
	\end{align*}
	Hence, $\lsp\{S_{\lambda}S_{\lambda}^* \colon \lambda \in \Lambda\}$ is closed under multiplication. Since it contains each $S_x S^*_x$ it follows that it contains $\lsp\Pp$.
	For $\mu,\nu \in \Lambda$, \cref{eq:range_prod_sum} and then~\labelcref{itm:cfc-mult} give
	\begin{align*}
		S_{\mu}^*S_{\nu}S_{\nu}^*
		&= S_{\mu}^* S_{\mu} S_{\mu}^* S_{\nu}S_{\nu}^* 
		= S_{\mu}^* \sum_{\mu \alpha \in \mu \vee \nu} S_{\mu \alpha} S_{\mu \alpha}^*\\
		&= \sum_{\mu \alpha \in \mu \vee \nu} S_{\mu}^*  \hatCImap(\Sigma(\mu,\alpha))^* S_{\mu} S_{\alpha} S_{\alpha}^* S_{\mu}^*\hatCImap(\Sigma_{\bullet}(\mu,\alpha)).
	\end{align*}
	Since the range of $\hatCImap$ is central and $\Sigma$ is unitary valued, and since $S_{\mu}^* S_\mu = S_{s(\mu)} = S_{r(\alpha)}$ by~\labelcref{itm:cf-source} and $S_{r(\alpha)} S_\alpha = S_\alpha$ by \cref{rmk:homotopies normalised}, we obtain
	\begin{equation}\label{eq:spanning_calc}
		S_{\mu}^*S_{\nu}S_{\nu}^*
		= S_{r(\alpha)} \Big( \sum_{\mu \alpha \in \mu \vee \nu} S_{\alpha}S_{\alpha}^*  \Big) S_{\mu}^*
		= \Big( \sum_{\mu \alpha \in \mu \vee \nu} S_{\alpha}S_{\alpha}^*  \Big) S_{\mu}^*.
	\end{equation}
	Combining \cref{lem:spanning_set} with the claim above gives
	\begin{align*}
		C^*(S,\CImap) = \clsp\{\hatCImap(f) S_{c} S_{d}^* S_{\nu} S_{\nu^*} \mid c,d &\in \Lambda \bowtie \Gg,\\
		& \nu \in \Lambda, g \in s(\lambda) \Gg s(\mu), f\in C(\II)\}.
	\end{align*}
	Given a spanning element $\hatCImap(f) S_{c} S_{d}^* S_{\nu} S_{\nu^*}$ as above, we can factor $c = \lambda g$ and $d = \mu h$ with $\lambda,\mu \in \Lambda$ and $g,h \in \Gg$ and then use \cref{lem:twisted_category_algebra_identities}\labelcref{itm:identities_1} and~\labelcref{itm:cfc-mult} followed by centrality of the range of $\hatCImap$ to write
	\begin{align*}
		\hatCImap(f) S_{c} S_{d}^* S_{\nu} S_{\nu^*}
		&= \hatCImap(f) \hatCImap(\Sigma_{\bullet}(\lambda,g))^* S_{\lambda} S_g \hatCImap(\Sigma_{\bullet}(h,h^{-1})) \hatCImap(\Sigma_{\bullet}(\mu,h))\\
		&\qquad \qquad S_{h^{-1}} S_{\mu}^*   S_{\nu} S_{\nu^*}\\
		&= \hatCImap(f \Sigma_{\bullet}(\lambda,g)^* \Sigma_{\bullet}(h,h^{-1})\Sigma_{\bullet}(g,h^{-1}) \Sigma_{\bullet}(\mu,h))\\
		&\qquad \qquad 
		S_{\lambda} S_{gh^{-1}} S_{\mu}^*  S_{\nu} S_{\nu^*},
	\end{align*}
	and we obtain
	\[
	C^*(S,\CImap) = \clsp\{\hatCImap(f) S_{\lambda} S_g S_{\mu}^* S_{\nu} S_{\nu^*} \mid \lambda,\mu,\nu \in \Lambda, g \in s(\lambda) \Gg s(\mu), f\in C(\II)\}.
	\]
	Using~ \labelcref{eq:spanning_calc} and using centrality of the image of $\hatCImap$ as we did in the preceding calculation, we can write a spanning element $\hatCImap(f) S_{\lambda} S_g S_{\mu}^* S_{\nu} S_{\nu^*}$ as
	\begin{align*}
		\hatCImap(f) &S_{\lambda} S_g S_{\mu}^* S_{\nu} S_{\nu^*}\\
		&= \hatCImap(f) S_{\lambda} S_g \Big( \sum_{\mu \alpha \in \mu \vee \nu} S_{\alpha}S_{\alpha}^*  \Big) S_{\mu}^*\\
		&= \sum_{\mu \alpha \in \mu \vee \nu}\Big( \hatCImap\big(f\Sigma_{\bullet}(\lambda, g \rhd\alpha)\Sigma_{\bullet}(g,\alpha)\Sigma_{\bullet}(g\rhd\alpha,g\lhd\alpha)^*\Sigma_{\bullet}(\mu,\alpha)\big) \\
		&\qquad \qquad S_{\lambda(g\rhd\alpha)} S_{g\lhd\alpha} S_{\mu\alpha}^* \Big),\\
		&\in \clsp \{\hatCImap(f) S_{\lambda} S_g S_{\mu}^* \mid \lambda,\mu  \in \Lambda, g \in s(\lambda) \Gg s(\mu), f\in C(\II)\}.\qedhere
	\end{align*}
\end{proof}

\begin{lem}[cf. {\cite[Theorem~3.3]{Gil15}}]\label[lem]{lem:epsilon_evaluation}
	Let $\Sigma$ be a homotopy of $2$-cocycles on a finitely aligned left-cancellative small category $\Cc$. Let $(\mathfrak{s}, \frakCImap)$ denote a universal $\Sigma$-twisted representation of $\Cc$ in $C^*_\II(\Cc, \Sigma)$, and let $s \colon \Cc \to C^*(\Cc, \Sigma_t)$ be a universal covariant $\Sigma_t$-twisted representation of $\Cc$ (see Definitions \ref{dfn:twisted_lcsc_algebra}~and~\ref{dfn:univ_twisted_lcsc_alg}). For $v \in \Cc^0$ define $\CImap_v \colon C(\II) \to C^*(\Cc, \Sigma_t)$ by $\CImap_v(f) = f(t)s_v$. For each $t \in \II$ there is a surjective homomorphism
	$\varepsilon_t \colon  C_{\II}^*(\Cc;\Sigma) \to C^*(\Cc, \Sigma_t)$ satisfying
	\[
	\varepsilon_t(\mathfrak{s}_{c}) = s_c \quad \text{ and } \quad \varepsilon_t(\frakCImap_v(f)) = f(t)s_v
	\]
	for all $c \in \Cc$, $f \in C(\II)$, and $v \in \Cc^0$.
	We have $\ker(\varepsilon_t) = C_{\II}^*(\Cc;\Sigma) \frakhatCImap(C_0(\II\setminus\{t\}))$, and $\varepsilon_t$ factors through an isomorphism of the fibre $C^*_{\II}(\Cc, \Sigma)_t$ of $C_{\II}^*(\Cc;\Sigma)$---regarded as a $C(\II)$-algebra as in \cref{lem:C(I)-algebra}---onto $C^*(\Cc, \Sigma_t)$.
\end{lem}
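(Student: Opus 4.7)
The plan is to build $\varepsilon_t$ via the universal property of $C^*_{\II}(\Cc, \Sigma)$, verify that it vanishes on the proposed ideal, and then construct an inverse on the fibre using the universal property of $C^*(\Cc, \Sigma_t)$.

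First I would check that setting $S_c \coloneqq s_c$ and $\CImap_v(f) \coloneqq f(t) s_v$ yields a covariant $\Sigma$-twisted representation of $\Cc$ in $C^*(\Cc, \Sigma_t)$. Each $\CImap_v$ is a $*$-homomorphism, being the composition of evaluation-at-$t$ with multiplication by the projection $s_v$, and mutual orthogonality of the images of the $\CImap_v$ follows from mutual orthogonality of the $s_v$. Condition~\labelcref{itm:cfc-one} is immediate from $1_{\II}(t) = 1$; condition~\labelcref{itm:cfc-commute} follows from $s_{r(c)} s_c = s_c = s_c s_{s(c)}$ (see \cref{rmk:homotopies normalised}); and~\labelcref{itm:cfc-mult} reduces via $\Sigma_{\bullet}(c_1, c_2)(t) = \Sigma_t(c_1, c_2)$ to~\labelcref{itm:cf-mult} for $s$. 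Conditions~\labelcref{itm:cf-range} and~\labelcref{itm:cf-ck} pass verbatim from $s$ to $S$. Universality of $C^*_{\II}(\Cc, \Sigma)$ then delivers $\varepsilon_t$, and surjectivity is immediate because the image contains the generators $s_c$ of $C^*(\Cc, \Sigma_t)$.

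Next I would show that $\varepsilon_t$ kills $C^*_{\II}(\Cc, \Sigma) \frakhatCImap(C_0(\II \setminus \{t\}))$. Given $b \in C^*_{\II}(\Cc, \Sigma)$ and $f \in C_0(\II \setminus \{t\})$, the approximate identity of \cref{cor:approx_id} together with the identity $\mathfrak{s}_v \frakhatCImap(f) = \frakCImap_v(f)$ (see \cref{lem:C(I)-algebra}) gives $b\,\frakhatCImap(f) = \lim_F \sum_{v \in F} b\,\frakCImap_v(f)$, and applying $\varepsilon_t$ produces $\lim_F \sum_{v \in F} f(t)\, \varepsilon_t(b)\, s_v = 0$. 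So $\varepsilon_t$ descends to a homomorphism $\tilde\varepsilon_t$ from the fibre $C^*_{\II}(\Cc, \Sigma)_t \coloneqq C^*_{\II}(\Cc, \Sigma) / (C^*_{\II}(\Cc, \Sigma) \frakhatCImap(C_0(\II \setminus \{t\})))$ onto $C^*(\Cc, \Sigma_t)$.

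For the inverse, let $q$ denote the quotient map onto the fibre. Since $\frakCImap_v(f) - f(t) \mathfrak{s}_v = \frakCImap_v(f - f(t) 1_{\II}) = \mathfrak{s}_v \frakhatCImap(f - f(t) 1_{\II})$ with $f - f(t) 1_{\II} \in C_0(\II \setminus \{t\})$, we have $q(\frakCImap_v(f)) = f(t) q(\mathfrak{s}_v)$. Using this identity one checks that $c \mapsto q(\mathfrak{s}_c)$ is a covariant $\Sigma_t$-twisted representation of $\Cc$: conditions~\labelcref{itm:cf-source}, \labelcref{itm:cf-range} and~\labelcref{itm:cf-ck} pass directly through the quotient, while~\labelcref{itm:cf-mult} for $\Sigma_t$ reduces to $q(\mathfrak{s}_{c_1}) q(\mathfrak{s}_{c_2}) = q(\frakCImap_{r(c_1)}(\Sigma_{\bullet}(c_1, c_2)))\, q(\mathfrak{s}_{c_1 c_2}) = \Sigma_t(c_1, c_2)\, q(\mathfrak{s}_{c_1 c_2})$. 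Universality of $C^*(\Cc, \Sigma_t)$ then yields $\psi_t \colon C^*(\Cc, \Sigma_t) \to C^*_{\II}(\Cc, \Sigma)_t$ with $\psi_t(s_c) = q(\mathfrak{s}_c)$, and both compositions with $\tilde\varepsilon_t$ agree with the identity on generating sets, so $\tilde\varepsilon_t$ is an isomorphism and $\ker(\varepsilon_t)$ coincides with the claimed ideal. The main obstacle is identifying the quotient images $q(\mathfrak{s}_c)$ with a genuinely $\Sigma_t$-twisted covariant representation; the identity $q(\frakCImap_v(f)) = f(t) q(\mathfrak{s}_v)$ is what makes this routine.
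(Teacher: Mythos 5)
Your proposal is correct and follows essentially the same route as the paper: construct $\varepsilon_t$ from the universal property of $C^*_{\II}(\Cc,\Sigma)$ applied to $(s,\CImap)$ with $\CImap_v(f)=f(t)s_v$, observe that $\frakhatCImap(C_0(\II\setminus\{t\}))C^*_{\II}(\Cc,\Sigma)$ lies in the kernel, and invert on the fibre by checking that $c\mapsto \mathfrak{s}_c + I_t$ is a covariant $\Sigma_t$-twisted representation. The only cosmetic difference is that you verify $I_t\subseteq\ker(\varepsilon_t)$ via the approximate identity of \cref{cor:approx_id}, where the paper treats this as immediate from the definitions.
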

\begin{proof}
	We claim that $(s,\CImap)$ is a covariant $\Sigma$-twisted representation of $\Cc$. The map $s$ satisfies~\labelcref{itm:cf-source}, \labelcref{itm:cf-range}~and~\labelcref{itm:cf-ck} of \cref{dfn:twisted_lcsc_algebra} because it is a covariant representation of $\Cc$.  We have $\CImap_v(1_\II) = 1 s_v = s_v$, which is \labelcref{itm:cfc-one} (of \cref{dfn:twisted_lscs_algebra_homotopy}). For composable $c_1, c_2 \in \Cc$, we have $s_{c_1} s_{c_2} = \Sigma_t(c_1, c_2) s_{c_1c_2} = \big(\Sigma_{\bullet}(c_1, c_2)\big)(t) s_{c_1c_2} = \CImap_{r(c_1)}\big(\Sigma_{\bullet}(c_1, c_2)\big) s_{c_1c_2}$, giving~\labelcref{itm:cfc-mult}. So $(s,\CImap)$ is a covariant $\Sigma$-twisted representation of $\Cc$ as claimed, and hence the universal property of $C_{\II}^*(\Cc;\Sigma)$ induces a $*$-homomorphism $\varepsilon_t \colon  C_{\II}^*(\Cc;\Sigma) \to C^*(\Cc, \Sigma_t)$ satisfying $\varepsilon_t(\mathfrak{s}_{c}) = s_c$ for all $c \in \Cc$ and $\varepsilon_t(\frakCImap_v(f)) = \CImap_v(f)s_v = f(t)s_v$ for all $v \in \Cc^0$ and $f \in C(\II)$. The image of $\varepsilon_t$ contains the generators of $C^*(\Cc, \Sigma_t)$ so $\varepsilon_t$ is surjective. 	
	
	For the final statement let $A \coloneqq C_{\II}^*(\Cc;\Sigma)$ and $I_t = \pi(C_0(\II \setminus \{t\}))A$. The fibre of $A$ at $t \in \II$ is $A_t \coloneqq A/I_t$.
	By definition of $\varepsilon_t$ and $\frakCImap$, we have $I_t \subseteq \ker(\varepsilon_t)$. Hence, $\varepsilon_t$ descends to a $*$-homomorphism $\ol{\varepsilon_t} \colon A_t \to C^*(\Cc, \Sigma_t)$.
	
	We use the universal property of $C^*(\Cc, \Sigma_t)$ to construct an inverse to $\ol{\varepsilon_t}$. Define $S \colon \Cc \to A_t$ by $S_c \coloneqq \mathfrak{s}_c + I_t$ for $c \in \Cc$. We claim that $S$ is a covariant $\Sigma_t$-twisted representation of $\Cc$. The $S_c$ are partial isometries because the $\mathfrak{s}_c$ are. For $(c_1,c_2) \in \Cc^2$,
	\[
	{\frakhatCImap}(\Sigma_{\bullet}(c_1,c_2))\mathfrak{s}_{c_1c_2} - \Sigma_t(c_1,c_2)\mathfrak{s}_{c_1c_2} = {\frakhatCImap}\big(\Sigma_{\bullet}(c_1,c_2) - \Sigma_t(c_1,c_2) 1_{\II}\big) \mathfrak{s}_{c_1c_2} \in I_t,
	\]
	so
	\[
	S_{c_1}S_{c_2} = {\frakhatCImap}(\Sigma_{\bullet}(c_1,c_2))\mathfrak{s}_{c_1c_2} + I_t = \Sigma_t(c_1,c_2)\mathfrak{s}_{c_1c_2} +I_t = \Sigma_t(c_1,c_2) S_{c_1c_2}.
	\]
	So $S$ satisfies~\labelcref{itm:cf-mult}. The $\mathfrak{s}_c$ satisfy~\labelcref{itm:cf-source}, \labelcref{itm:cf-range}~and~\labelcref{itm:cf-ck} because $(\mathfrak{s}, \frakCImap)$ is a covariant $\Sigma$-twisted representation. Consequently their images $S_c$ under the homomorphism $a \mapsto a + I_t$ satisfy the same relations. So $S$ is a $\Sigma_t$-twisted representation of $\Cc$ in $A_t$ as claimed, and the universal property of $C^*(\Cc, \Sigma_t)$ gives a $*$-homomorphism $\psi \colon C^*(\Cc, \Sigma_t) \to A_t$ such that $\psi(s_c) = S_c$ for all $c \in \Cc$. Since each $\CImap_v(f) - f(t)\CImap_v(1_\II) \in I_t$, each $\CImap_v(f) + I_t = f(t) \CImap_v(1_\II) = f(t) s_v$. Since $A$ is generated by $\{\mathfrak{s}_c : c \in \Cc\} \cup \{\frakCImap_v(f) : v \in \Cc^0\text{ and } f \in C(\II)\}$, the quotient $A_t$ is generated by the images of these elements, and we deduce that $A_t = C^*(\{\mathfrak{s}_c + I_t : c \in \Cc\})$. Hence the maps $\psi$ and $\ol{\varepsilon_t}$ are mutually inverse on generators and hence are mutually inverse isomorphisms.
\end{proof}

\subsection{Invariance of \texorpdfstring{\boldmath$K$}{K}-theory under \texorpdfstring{\boldmath$2$}{2}-cocycle homotopy}

\begin{dfn}
	Let $\Cc$ be a finitely aligned left-cancellative small category. We say that \emph{$K_*(C^*(\Cc, \bullet))$ is constant along homotopies} if for any homotopy $\Sigma$ of 2-cocycles and any $t \in \II$, the homomorphisms
	\[
	K_*(\varepsilon_t) \colon K_*(C^*_{\II}(\Cc, \Sigma)) \to K_*(C^*(\Cc, \Sigma_t))
	\]
	in $K$-theory induced by the map $\varepsilon_t$ of \cref{lem:epsilon_evaluation}
	are isomorphisms.
\end{dfn}

\begin{rmk}
	The property that $K_*(C^*(\Cc, \bullet))$ is constant along homotopies is a property of $\Cc$, but we could not think of good suggestive terminology that better emphasises this. Note that while the condition is stated in terms of $K_*(\varepsilon_t)$ for all $t$, in fact $K_*(C^*(\Cc, \bullet))$ is constant along homotopies if and only if  $K_*(\varepsilon_0)$ is isomorphism for every homotopy $\Sigma$ of $2$-cocycles.
\end{rmk}

Suppose that $(\Cc,\Dd)$ is a matched pair of categories.
If $\Sigma \colon \II \times (\Dd \bowtie \Cc)^2 \to \TT$ is a homotopy of $2$-cocycles on $\Dd \bowtie \Cc$, then the restriction $\Sigma|_{\Cc} \colon \II \times \Cc^2 \to \TT$ defined by
$
(\Sigma|_{\Cc})_t (c_1,c_2) \coloneqq \Sigma_t (c_1,c_2)
$
for $(c_1,c_2) \in \Cc^2 \subseteq (\Dd \bowtie \Cc)^2$ is a homotopy of $2$-cocycles on $\Cc$. We frequently just write $\Sigma$ for this restriction. We can now state our main theorem.

\begin{thm}\label[thm]{thm:cocycle_homotopy_independence} For any jointly faithful self-similar action of a countable discrete amenable groupoid $\Gg$ on a row-finite $k$-graph $\Lambda$ with no sources, $K_*(C^*(\Lambda \bowtie \Gg, \bullet))$ is constant along homotopies.
\end{thm}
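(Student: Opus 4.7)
The plan is to follow the Elliott-style strategy flagged in the introduction and induct on the rank $k$, leveraging the naturality of Pimsner's six-term exact sequence at the inductive step. For the base case $k=0$, the category $\Lambda$ consists of identity morphisms only, so $\Lambda \bowtie \Gg = \Gg$ and $C^*_\II(\Lambda \bowtie \Gg, \Sigma) = C^*_\II(\Gg, \Sigma)$. The desired cocycle-homotopy invariance then follows from (or can be proved along the lines of) known results for twisted $C^*$-algebras of countable discrete amenable groupoids; cf.~\cite{Gil15,ELPW10}.

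For the inductive step, let $\Lambda$ be a row-finite $(k+1)$-graph with no sources. Set $\Gamma \coloneqq \Lambda^{\NN^k}$ and $E^* \coloneqq \Lambda^{\NN e_{k+1}}$ so that $\Lambda \cong E^* \bowtie \Gamma$ as in \cref{ex:k-graph_zs}; then $\Gg$ acts self-similarly and jointly faithfully on the $k$-graph $\Gamma$. Write $A \coloneqq C^*_\II(\Gamma \bowtie \Gg, \Sigma)$ and $B \coloneqq C^*_\II(\Lambda \bowtie \Gg, \Sigma)$. I would first establish, by adapting \cref{cor:subalg} from single cocycles to homotopies of cocycles (using \cref{lem:epsilon_evaluation} and the $C(\II)$-algebra structure of \cref{lem:C(I)-algebra} to check the hypotheses fibrewise), that $A$ embeds in $B$ as a $C(\II)$-subalgebra. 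Next I would construct an $A$--$A$-correspondence $X \subseteq B$ from the elements $\mathfrak{s}_{\lambda g}$ with $d(\lambda) = e_{k+1}$, equipped with right and left $A$-module structures inherited from multiplication in $B$ and inner product $\langle \xi, \eta \rangle \coloneqq \xi^* \eta$. Using the spanning description of \cref{cor:spanning_set_ssa} together with the factorisation $\Lambda \cong E^* \bowtie \Gamma$, one shows that $A$ and $X$ generate $B$ and that the inclusion satisfies the universal property of the Cuntz--Pimsner algebra $\Oo_X$; the Gauge-Invariant Uniqueness Theorem \cite[Corollary~7.10]{MS?} then delivers an isomorphism $B \cong \Oo_X$. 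The same recipe applied fibrewise gives $C^*(\Lambda \bowtie \Gg, \Sigma_t) \cong \Oo_{X_t}$ with $X_t \cong X \otimes_A A_t$, and the evaluation map $\varepsilon_t$ intertwines these identifications.

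Pimsner's six-term exact sequence \cite{Pim97}, applied to $X$ and to each $X_t$ and combined with the inductive hypothesis (which says that $K_*(\varepsilon_t)\colon K_*(A) \to K_*(C^*(\Gamma \bowtie \Gg, \Sigma_t))$ is an isomorphism), produces a commuting ladder of six-term exact sequences in which four of every five vertical maps are isomorphisms. The five-lemma then yields the desired isomorphism $K_*(\varepsilon_t)\colon K_*(B) \to K_*(C^*(\Lambda \bowtie \Gg, \Sigma_t))$, completing the induction.

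The main obstacle I anticipate is the honest construction of the correspondence $X$ over the $C(\II)$-algebra $A$: keeping track of the cocycle-homotopy twisting across the Zappa--Sz\'ep structure, verifying that the left action of $A$ lands in the generalised compacts $\Kk(X)$ (so that Pimsner's sequence applies in the form required for a clean five-lemma argument), and ensuring that the isomorphism $B \cong \Oo_X$ is natural with respect to the evaluations $\varepsilon_t$, so that the induced $K$-theoretic ladder genuinely commutes. The row-finite, no-sources hypotheses on $\Lambda$ and the joint faithfulness of the action should provide precisely the ingredients needed to carry this out, mirroring the roles played by the corresponding hypotheses in \cref{cor:subalg} and \cref{lem:self-similar_exhaustive}.
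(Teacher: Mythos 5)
Your proposal follows essentially the same route as the paper's proof: induction on $k$ with base case the discrete amenable groupoid, and inductive step realising $C^*_{\II}(\Lambda \bowtie \Gg, \Sigma)$ as the Cuntz--Pimsner algebra of the correspondence $X = \clsp\{\mathfrak{s}_e a : e \in E^1,\ a \in C^*_{\II}(\Gamma \bowtie \Gg,\Sigma)\}$ over the rank-$k$ subalgebra, then invoking naturality of the Pimsner six-term sequence and the five lemma exactly as you describe. The one place your sketch is noticeably thinner than the paper is the base case: the result of \cite{ELPW10} is stated for groups, so the paper first reduces the groupoid algebra to a direct sum of twisted isotropy-group algebras via a full multiplier projection (and uses Higson--Kasparov to verify Baum--Connes with coefficients for the amenable isotropy groups) before applying it.
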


Our strategy for proving \cref{thm:cocycle_homotopy_independence} is to apply Elliott's inductive Five-Lemma argument \cite{Ell84} to the Pimsner exact sequence in $K$-theory \cite[Theorem~4.8]{Pim97}. The following proposition provides the base case of the induction.

\begin{prop}\label[prop]{prop:groupoids_K_cts}
	Let $\Gg$ be a countable discrete amenable groupoid. Then $K_*(C^*(\Gg, \bullet))$ is constant along homotopies.
\end{prop}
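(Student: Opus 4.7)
The plan is to reduce the claim to the case of countable discrete amenable groups, where cocycle-homotopy invariance of $K$-theory is known. First, I would identify $C^*_\II(\Gg, \Sigma)$ with the twisted groupoid $C^*$-algebra $C^*(\Gg \times \II, \widetilde{\Sigma})$ of the topological groupoid $\Gg \times \II$ having unit space $\Gg^0 \times \II$, where $\widetilde{\Sigma}((g_1,t),(g_2,t)) \coloneqq \Sigma_t(g_1,g_2)$. Under this identification $\varepsilon_t$ becomes the restriction homomorphism associated to the closed subgroupoid $\Gg \times \{t\}$, with kernel $C^*(\Gg \times (\II \setminus \{t\}), \widetilde{\Sigma})$. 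Amenability of $\Gg$ together with the trivial groupoid structure on $\II$ ensures that $\Gg \times \II$ is amenable as a topological groupoid.

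Next, decompose $\Gg$ along the orbit partition $\Gg^0 = \bigsqcup_{O \in \Gg^0/\Gg} O$. Each orbit $O$ determines an invariant subgroupoid $\Gg|_O$ and a natural direct-sum decomposition of $C^*_\II(\Gg, \Sigma)$ into the corresponding $C^*_\II(\Gg|_O, \Sigma|_{\Gg|_O})$ that commutes with all evaluations $\varepsilon_t$, so one may assume $\Gg$ is transitive on $\Gg^0$. Fix $v \in \Gg^0$; the isotropy group $H \coloneqq v\Gg v$ is then countable, discrete, and amenable, and the bibundle $s^{-1}(v) \subseteq \Gg$ realises a Morita equivalence between $\Gg$ and $H$ that is compatible with cocycles. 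This lifts to a Morita equivalence of $C(\II)$-algebras between $C^*_\II(\Gg, \Sigma)$ and $C^*_\II(H, \Sigma|_H)$ intertwining the evaluation maps, so Morita invariance of $K$-theory reduces the proposition to the case of a countable discrete amenable group $H$.

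For a countable discrete amenable group, cocycle-homotopy invariance of the $K$-theory of twisted group $C^*$-algebras is known from the literature cited in the introduction, generalising Elliott's Pimsner--Voiculescu induction argument for $\ZZ^n$ (alternatively, one can argue via the Baum--Connes assembly map, which depends naturally on the cocycle). Applying this result to $H$ and transporting the isomorphism back through the Morita equivalence and orbit decomposition yields the proposition.

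The main obstacle lies in verifying that the Morita equivalence in the transitive case is genuinely a Morita equivalence of $C(\II)$-algebras, not merely a pointwise equivalence of fibres at each $t \in \II$; this stronger form is what ensures that the evaluation maps on the two sides are intertwined. One can address this by constructing the implementing imprimitivity bimodule as a $C(\II)$-module from the trivial bundle $s^{-1}(v) \times \II$ equipped with fibrewise $(\Gg \times \II)$- and $(H \times \II)$-actions, and checking that the associated bimodule inner products extend continuously in $t$. Provided this check goes through, the remaining bookkeeping is standard.
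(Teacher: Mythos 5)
Your proposal is correct and follows essentially the same route as the paper: decompose $\Gg$ along its orbits, pass to the isotropy groups by a Morita equivalence of $C(\II)$-algebras compatible with the evaluation maps, and invoke \cite[Theorem~0.3]{ELPW10} together with the Higson--Kasparov verification of Baum--Connes for the amenable isotropy groups. The only difference is one of implementation: where you propose an abstract bibundle equivalence built from $s^{-1}(v) \times \II$ and rightly flag the need to verify $C(\II)$-linearity, the paper cuts down by the concrete full multiplier projection $P_X = \sum_{x \in X} \mathfrak{s}_x$ associated to a transversal $X$ of the orbits, so that $P_X C^*_{\II}(\Gg,\Sigma) P_X \cong \bigoplus_{x \in X} C^*_{\II}(x \Gg x, \Sigma)$ and the compatibility of $\varepsilon_t$ with the corner reduces to a computation on generators.
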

\begin{proof}
	Fix a homotopy $\Sigma$ of $2$-cocycles on $\Gg$ and a point $t \in \II$. Choose a subset $X$ of $\Gg^{0}$ that meets each $\Gg$-orbit exactly once: that is $|X \cap r(\Gg x)| = 1$ for all $x \in \Gg^0$.
	
	By the argument of \cref{lem:twisted_category_algebra_identities}, for all $(g,h^{-1}) \in \Gg^2$ we have
	\[
	\mathfrak{s}_g \mathfrak{s}_{h}^* = \frakhatCImap(\Sigma_{\bullet}(g,h^{-1})\ol{\Sigma_{\bullet}(h,h^{-1})}) \mathfrak{s}_{gh^{-1}}.
	\]
	Hence, \cref{lem:spanning_set} implies that $C_{\II}^*(\Gg,\Sigma)$ is spanned by elements of the form $\frakhatCImap(f)\mathfrak{s}_g$ where $f \in C(\II)$ and $g \in \Gg$. Fix such an element and a finite subset $F \subseteq X$. Then
	$
	\sum_{x \in F} \mathfrak{s}_x \frakhatCImap(f) \mathfrak{s}_g  = 0
	$
	if $r(g) \notin X$, and if $r(g) \in X$ then $\sum_{x \in F} \mathfrak{s}_x \frakhatCImap(f) \mathfrak{s}_g = \frakhatCImap(f)\mathfrak{s}_g$ whenever $\{r(g)\} \subseteq F$. So
	\[
	\sum_{x \in X} \mathfrak{s}_x \frakhatCImap(f) \mathfrak{s}_g
	=
	\begin{cases}
		\frakhatCImap(f) \mathfrak{s}_g & \text{if } r(g) \in X\\
		0 & \text{otherwise}.
	\end{cases}
	\]
	Taking adjoints shows that
	\[
	\frakhatCImap(f) \mathfrak{s}_g 	\sum_{x \in X} \mathfrak{s}_x
	=
	\begin{cases}
		\frakhatCImap(f) \mathfrak{s}_g & \text{if } s(g) \in X\\
		0 & \text{otherwise}.
	\end{cases}
	\]
	An approximation argument similar to the one in \cref{lem:C(I)-algebra} shows that $P_X \coloneqq \sum_{x \in X} \mathfrak{s}_x$ is a multiplier projection of $C_{\II}^*(\Gg,\Sigma)$. Fix $y \in \Gg^0$.
	Since $X$ meets each orbit there exists $g \in \Gg$ such that $r(g) = y$ and $s(g) \in X$. So $\mathfrak{s}_y = \mathfrak{s}_{g}
	P_X \mathfrak{s}_{g}^*$. Hence, $P_X$ is full.
	
	Similarly, the characteristic function $\bone_X$ is a full multiplier projection of $C^*(\Gg, \Sigma_t)$. Calculating on generators as above shows that the extension of $\varepsilon_t$ to multiplier algebras carries $P_X$ to $\bone_X$.
	Because $K$-theory is Morita invariant, it suffices to show that $\varepsilon_t \colon P_X C^*_{\II}(\Gg,\Sigma)P_X \to \bone_X C^*(\Gg,\Sigma_t) \bone_X$ induces an isomorphism in $K$-theory.
	
	Since $X$ intersects each $\Gg$-orbit once, for distinct $x,y \in X$ and $g \in \Gg$ if $r(g) = x$, then $s(g) \ne y$. So $\mathfrak{s}_x \mathfrak{s}_g \mathfrak{s}_y = 0$.
	Consequently,
	\[
	P_X C^*_{\II}(\Gg,\Sigma) P_X = \bigoplus_{x \in X} \mathfrak{s}_x C^*_{\II}(\Gg,\Sigma) \mathfrak{s}_x \cong \bigoplus_{x \in X} C^*_{\II}(x \Gg x, \Sigma).
	\]
	
	Since $\Gg$ is amenable, each $x \Gg x$ is amenable (see \cref{rmk:amenability}), and hence each $C^*_{\II}(x \Gg x, \Sigma)$ is canonically isomorphic to the $C^*$-algebra $C^*_r(G,\Omega)$ described in the paragraph following \cite[Theorem~0.3]{ELPW10} for $G = x \Gg x$ and $\Omega = \Sigma|_{x \Gg x}$.
	By \cite[Corollary~9.2]{HigsonKasparov} each $x \Gg x$ satisfies the Baum--Connes conjecture with coefficients. So \cite[Theorem~0.3]{ELPW10} implies that the restriction of $\varepsilon_t$ to $C^*_{\II}(x \Gg x, \Sigma)$ induces an isomorphism $K_*(\varepsilon_t)$ from $K_*(C^*_{\II}(x \Gg x, \Sigma))$ to $K_*(C^*(x \Gg x, \Sigma_t))$. Since $K$-theory respects direct sums, it follows that
	\[
	K_*(\varepsilon_t) \colon K_*\Big(\bigoplus_{x \in X} C^*_{\II}(x \Gg x, \Sigma)\Big) \to  K_*\Big( \bigoplus_{x \in X}C^*(x \Gg x, \Sigma_t)\Big)
	\]
	is an isomorphism.
\end{proof}

\begin{rmk}
	\cref{prop:groupoids_K_cts} does not imply that the $K$-theory of $C^*(\Gg, \sigma)$ is independent of $\sigma$. There exist discrete amenable groups $G$ and (non-homotopic) $2$-cocycles $\sigma_1,\sigma_2$ on $G$ such that $K_*(C^*(G,\sigma_1)) \not\cong K_*(C^*(G,\sigma_2))$, \cite[Proposition~3.11]{PR92}.
\end{rmk}

For the inductive step in the proof of \cref{thm:cocycle_homotopy_independence}, given a row-finite $(k+1)$-graph $\Lambda$ with no sources, we need a Cuntz--Pimsner model, in the sense of the universal property of Cuntz--Pimsner algebras described in \cite[Theorem~3.12]{Pim97}, for $C_{\II}^*(\Lambda \bowtie \Gg, \Sigma)$, whose coefficient algebra is $C_{\II}^*(\Gamma \bowtie \Gg, \Sigma)$ for a rank-$k$ subgraph $\Gamma$ of $\Lambda$. We establish such a model in \cref{prop:CP-model} after proving two preliminary lemmas; the proof of \cref{thm:cocycle_homotopy_independence} then appears on page~\pageref{proof:main_theorem}. From here until \cref{prop:CP-model}, we work in the following setup. 

\noindent\textbf{Standing assumptions:} Fix a jointly faithful self-similar action of a discrete amenable groupoid $\Gg$ on a row-finite $(k+1)$-graph $\Lambda$ with no sources ($k$ could be 0). Let $\Gamma = \Lambda^{\NN^k}$ and $E^* = \Lambda^{\NN e_{k+1}}$, so that $\Lambda = E^* \bowtie \Gamma$ as in \cref{ex:k-graph_zs}. Let $\Sigma$ be a homotopy of $2$-cocycles on $\Lambda \bowtie \Gg$.

\begin{lem}
	\label[lem]{lem:homotopy_twist_inclusion}
	The inclusion $\Gamma \bowtie \Gg \hookrightarrow \Lambda \bowtie \Gg$ induces an injective $*$-homomorphism $\Pi \colon C^*_{\II}(\Gamma \bowtie \Gg, \Sigma) \to C^*_{\II}(\Lambda \bowtie \Gg, \Sigma)$.
\end{lem}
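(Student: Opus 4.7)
The plan is to first use the universal property to construct $\Pi$, and then exploit the $C(\II)$-algebra structure to deduce injectivity from the fiberwise injectivity already established in \cref{cor:subalg}.

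For the construction, let $(\mathfrak{s}^\Lambda, \frakCImap^\Lambda)$ denote a universal $\Sigma$-twisted covariant representation of $\Lambda \bowtie \Gg$ in $C^*_\II(\Lambda \bowtie \Gg, \Sigma)$. I will show that its restriction to $\Gamma \bowtie \Gg$, together with the family $\{\frakCImap^\Lambda_v : v \in (\Gamma \bowtie \Gg)^0\} = \{\frakCImap^\Lambda_v : v \in \Gg^0\}$, is a covariant $\Sigma$-twisted representation of $\Gamma \bowtie \Gg$. Relations \labelcref{itm:cfc-one}, \labelcref{itm:cfc-commute} and \labelcref{itm:cfc-mult} of \cref{dfn:twisted_lscs_algebra_homotopy} and~\labelcref{itm:cf-source} of \cref{dfn:twisted_lcsc_algebra} are inherited directly from the relations for the ambient representation of $\Lambda \bowtie \Gg$. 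Relation~\labelcref{itm:cf-range} for the restriction is exactly the content of \cref{lem:self_similar_concordance} combined with the argument of \cref{prop:toeplitz_map}: concordance of $\Gamma \bowtie \Gg$ in $\Lambda \bowtie \Gg$ ensures that the finite set witnessing $c_1 (\Gamma \bowtie \Gg) \cap c_2 (\Gamma \bowtie \Gg)$ also witnesses $c_1(\Lambda \bowtie \Gg) \cap c_2 (\Lambda \bowtie \Gg)$. Relation~\labelcref{itm:cf-ck} follows from \cref{lem:self-similar_exhaustive}, since every finite exhaustive subset of $v(\Gamma \bowtie \Gg)$ is exhaustive in $v(\Lambda \bowtie \Gg)$. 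The universal property of $C^*_\II(\Gamma \bowtie \Gg, \Sigma)$ then yields a $*$-homomorphism $\Pi$ with $\Pi(\mathfrak{s}^{\Gamma}_{\gamma g}) = \mathfrak{s}^\Lambda_{\gamma g}$ for all $\gamma g \in \Gamma \bowtie \Gg$ and $\Pi(\frakCImap^{\Gamma}_v(f)) = \frakCImap^\Lambda_v(f)$ for all $v \in \Gg^0$ and $f \in C(\II)$.

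For injectivity, I will use the $C(\II)$-algebra structures provided by \cref{lem:C(I)-algebra}. Because $\Pi$ carries the central multiplier $\frakhatCImap^\Gamma(f)$ to $\frakhatCImap^\Lambda(f)$ (by summing $\Pi(\frakCImap^\Gamma_v(f)) = \frakCImap^\Lambda_v(f)$ over $v \in \Gg^0$ and invoking \cref{cor:approx_id}), the homomorphism $\Pi$ is $C(\II)$-linear. Hence for each $t \in \II$ it descends to a $*$-homomorphism on fibers $\Pi_t \colon C^*_\II(\Gamma \bowtie \Gg, \Sigma)_t \to C^*_\II(\Lambda \bowtie \Gg, \Sigma)_t$. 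Using the isomorphisms of \cref{lem:epsilon_evaluation} to identify the fibres with $C^*(\Gamma \bowtie \Gg, \Sigma_t)$ and $C^*(\Lambda \bowtie \Gg, \Sigma_t)$ respectively, the map $\Pi_t$ is determined on generators by $s^\Gamma_{\gamma g} \mapsto s^\Lambda_{\gamma g}$, so it agrees with the map $\ol{\Phi}$ of \cref{thm:self-similar_inductive_maps} for the cocycle $\Sigma_t$. By \cref{cor:subalg}, joint faithfulness and amenability imply that this $\ol{\Phi}$ is injective for every $t$.

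Finally, I will invoke the standard fact that a morphism of $C(\II)$-algebras that is injective on every fibre is itself injective: the canonical map $a \mapsto (a_t)_{t \in \II}$ from any $C(\II)$-algebra to the $C^*$-algebra of sections of its upper semicontinuous bundle of fibres is isometric, so if all $\Pi_t$ are injective then $\|\Pi(a)\| \ge \sup_t \|\Pi_t(a_t)\| = \sup_t \|a_t\| = \|a\|$. I expect the main subtlety to lie in verifying cleanly that $\Pi$ really is $C(\II)$-linear and induces on fibres precisely the maps $\ol{\Phi}$ of \cref{cor:subalg}; once these identifications are in place, the rest is formal.
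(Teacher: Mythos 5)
Your proposal is correct and follows essentially the same route as the paper: the homomorphism is built by restricting the universal covariant $\Sigma$-twisted representation of $\Lambda \bowtie \Gg$ and checking \labelcref{itm:cf-range} via \cref{lem:self_similar_concordance} and \labelcref{itm:cf-ck} via \cref{lem:self-similar_exhaustive}, and injectivity is deduced by observing that $\Pi$ is a $C(\II)$-algebra morphism whose fibre maps are the injective maps $\ol{\Phi}$ of \cref{cor:subalg}, using that the norm on a $C(\II)$-algebra is the supremum of the fibre norms. The only cosmetic caveat is that your final display tacitly uses that injective $*$-homomorphisms are isometric, which is of course standard.
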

\begin{proof}
	Let $(\mathfrak{s}^\Gamma,\frakCImap^\Gamma)$ and $(\mathfrak{s}^\Lambda,\frakCImap^\Lambda)$ denote universal $\Sigma$-twisted representations of $\Gamma \bowtie \Gg$ and $\Lambda \bowtie \Gg$, respectively. Then $(\mathfrak{s}^{\Lambda}|_{\Gamma \bowtie \Gg},\frakCImap^{\Lambda})$ satisfies \labelcref{itm:cfc-one}, \labelcref{itm:cfc-commute}, \labelcref{itm:cfc-mult}, and \labelcref{itm:cf-source} (see Definitions \ref{dfn:twisted_lcsc_algebra}~and~\ref{dfn:twisted_lscs_algebra_homotopy}) as these relations for $\Lambda \bowtie \Gg$ are identical to those for $\Gamma \bowtie \Gg$ on elements of the latter. \Cref{lem:self_similar_concordance} implies that $\Gamma \bowtie \Gg \hookrightarrow \Lambda \bowtie \Gg$ is concordant, so the argument of the second paragraph of the proof of \cref{prop:toeplitz_map} shows that $(\mathfrak{s}^{\Lambda}|_{\Gamma \bowtie \Gg},\frakCImap^{\Lambda})$ satisfies~\labelcref{itm:cf-range}. To see that it satisfies~\labelcref{itm:cf-ck}, fix $v \in \Lambda^0$ and a finite set $F \subseteq v\Gamma$ that is exhaustive in $\Gamma$. Then \cref{lem:self-similar_exhaustive} shows that $F$ is exhaustive in $\Lambda$, and since $\mathfrak{s}^{\Lambda}$ satisfies~\labelcref{itm:cf-ck} with respect to $\Lambda$, it follows that $\mathfrak{s}^{\Lambda}_v = \bigvee_{c \in F} \mathfrak{s}^\Lambda_v (\mathfrak{s}^\Lambda_v)^*$, as required. Hence, the inclusion $\Gamma \bowtie \Gg \hookrightarrow \Lambda \bowtie \Gg$ induces a  $*$-homomorphism $\Pi \colon C^*_{\II}(\Gamma \bowtie \Gg, \Sigma) \to C^*_{\II}(\Lambda \bowtie \Gg, \Sigma)$.
	
	To see that $\Pi$ is injective, note that $\Pi \circ \frakhatCImap^{\Gamma} = \frakhatCImap^{\Lambda} \circ \Pi$, so $\Pi$ is a homomorphism of $C(\II)$-algebras. So it induces homomorphisms $\Pi_t \colon C^*(\Gamma \bowtie \Gg, \Sigma_t) \to C^*(\Lambda \bowtie \Gg, \Sigma_t)$. Since the norm on a $C(\II)$-algebra is the supremum norm \cite[Proposition~C.23 and Theorem~C.26]{Wil07}, it suffices to show that each $\Pi_t$ is injective, which is \cref{cor:subalg}.
\end{proof}

Let $A \coloneqq \Pi(C_{\II}^*(\Gamma \bowtie \Gg, \Sigma)) \subseteq C^*_{\II}(\Lambda \bowtie \Gg, \Sigma)$, the range of the homomorphism $\Pi$ of \cref{lem:homotopy_twist_inclusion}.

\begin{lem}\label[lem]{lem:correspondence}
	The subspace
	\[
	X \coloneqq \clsp\{\mathfrak{s}_e a \colon e \in E^1,\, a \in A\}
	\]
	of $C_{\II}^*(\Lambda \bowtie \Gg, \Sigma)$
	is a right Hilbert $A$-module with right action and inner product satisfying
	\[
	\mathfrak{s}_e a \cdot b = \mathfrak{s}_e ab \quad \text{ and } \quad \langle \mathfrak{s}_e a \mid \mathfrak{s}_f b \rangle = \delta_{e,f} a^*\mathfrak{s}_{s(e)} b.
	\]
	The map $\varphi \colon A \to \Ll_A(X)$ defined by  $\varphi(a) \mathfrak{s}_e b = a \mathfrak{s}_e b$ makes $(\varphi,X)$ a nondegenerate $C^*$-correspondence over $A$. The map $\varphi$ is injective and takes values in the generalised compact operators $\Kk_A(X)$.
\end{lem}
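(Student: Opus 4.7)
The plan is to exploit that $X$ sits inside the $C^*$-algebra $B \coloneqq C^*_{\II}(\Lambda \bowtie \Gg, \Sigma)$ and let it inherit its Hilbert module structure: I would simply define $\langle x \mid y\rangle \coloneqq x^* y$ and let $A$ act by ambient multiplication on both sides. The first check is that these formulas take values in the intended places. For $e, f \in E^1 = \Lambda^{e_{k+1}}$, every minimal common extension has degree $e_{k+1}$ and factors through both $e$ and $f$, so $\MCE(e, f) = \{e\}$ when $e = f$ and is empty otherwise. Feeding this into~\labelcref{eq:alternate_mult} and using normalisation of $\Sigma$ on identity arrows gives $\mathfrak{s}_e^* \mathfrak{s}_f = \delta_{e, f}\mathfrak{s}_{s(e)}$, and since $s(e) \in \Lambda^0 = \Gamma^0$ this vertex projection lies in $A$. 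Hence $(\mathfrak{s}_e a)^*(\mathfrak{s}_f b) = \delta_{e,f}\,a^*\mathfrak{s}_{s(e)}b \in A$, which recovers the stated formula and shows $X^*X \subseteq A$. Because $\|x^*x\|_B = \|x\|_B^2$, the inner product norm agrees with the ambient norm, so $X$ is a complete right Hilbert $A$-module.

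The main work, and the principal obstacle, is checking that $A \cdot X \subseteq X$, so that left multiplication genuinely defines an operator $\varphi(a)$. I would use \cref{cor:spanning_set_ssa} together with \cref{lem:homotopy_twist_inclusion} to reduce to verifying $a\mathfrak{s}_e b \in X$ when $a = \frakhatCImap(f)\mathfrak{s}_\mu\mathfrak{s}_g\mathfrak{s}_\nu^*$ with $\mu, \nu \in \Gamma$ and $g \in \Gg$. The awkward factor $\mathfrak{s}_\nu^*\mathfrak{s}_e$ I would expand via~\labelcref{eq:alternate_mult}: since $d(\nu) \in \NN^k$ and $d(e) = e_{k+1}$, every $\MCE(\nu, e)$ term factors as $\nu\alpha = e\beta$ with $\alpha \in E^1$ and $\beta \in \Gamma$. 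Then~\labelcref{itm:cfc-mult} turns $\mathfrak{s}_g\mathfrak{s}_\alpha$ into a central scalar multiple of $\mathfrak{s}_{g \la \alpha}\mathfrak{s}_{g \ra \alpha}$ with $g \la \alpha \in E^1$ (the self-similar action preserves degrees), and the Zappa--Sz\'ep decomposition $\Lambda = E^* \bowtie \Gamma$ of \cref{ex:k-graph_zs} lets me refactor $\mu(g \la \alpha) \in \Gamma E^1$ as $\alpha'\mu'$ with $\alpha' \in E^1$ and $\mu' \in \Gamma$. Reassembling, every summand lands in $\mathfrak{s}_{\alpha'}A \subseteq X$; bookkeeping of cocycle scalars is the tedious part, but since $\frakhatCImap(C(\II))$ is central in $\Mm(B)$ they all gather harmlessly at the front. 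Given this, $\varphi(a)$ is bounded by $\|a\|$ and adjointable with $\varphi(a)^* = \varphi(a^*)$ via $\langle \varphi(a)x \mid y\rangle = x^*a^*y = \langle x \mid \varphi(a^*)y\rangle$, so $\varphi \colon A \to \Ll_A(X)$ is a $*$-homomorphism. Nondegeneracy falls out of the identity $\mathfrak{s}_{r(e)}\mathfrak{s}_e b = \mathfrak{s}_e b$ since $\mathfrak{s}_{r(e)} \in A$.

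For the remaining two statements I would rely on the Cuntz--Krieger relation~\labelcref{itm:CK}, which holds for every covariant $\Sigma$-twisted representation of $\Lambda \bowtie \Gg$ by \cref{rmk:I-relation_relationships}. If $\varphi(a) = 0$, then $a\mathfrak{s}_e = 0$ for every $e \in E^1$, hence $a\mathfrak{s}_v = a\sum_{e \in vE^1}\mathfrak{s}_e\mathfrak{s}_e^* = 0$ for every $v \in \Lambda^0$; the approximate identity $\bigl(\sum_{v \in F}\mathfrak{s}_v\bigr)_F$ of \cref{cor:approx_id} then forces $a = 0$. For the compacts statement, a direct calculation with the inner-product formula should give $\theta_{\mathfrak{s}_e,\mathfrak{s}_e}(\mathfrak{s}_f b) = \delta_{e,f}\mathfrak{s}_e b$, where $\theta_{x,y}(z) \coloneqq x\langle y \mid z\rangle$ is the usual rank-one operator; summing over $e \in vE^1$ and comparing with $\varphi(\mathfrak{s}_v)(\mathfrak{s}_f b) = \mathfrak{s}_v\mathfrak{s}_f b$ identifies $\varphi(\mathfrak{s}_v) = \sum_{e \in vE^1}\theta_{\mathfrak{s}_e,\mathfrak{s}_e} \in \Kk_A(X)$. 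Since $\Kk_A(X)$ is an ideal of $\Ll_A(X)$ and $(\mathfrak{s}_v)_{v}$ gives an approximate unit for $A$, we conclude $\varphi(A) \subseteq \Kk_A(X)$.
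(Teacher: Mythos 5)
Your proposal is correct and follows essentially the same route as the paper's proof: inheriting the inner product and norm from the ambient algebra $C^*_\II(\Lambda\bowtie\Gg,\Sigma)$, reducing the verification of $AX\subseteq X$ to the spanning elements of \cref{cor:spanning_set_ssa} and handling $\mathfrak{s}_\nu^*\mathfrak{s}_e$ via \labelcref{eq:alternate_mult} together with degree-preservation and the factorisation $\Gamma E^1 = E^1\Gamma$, and then using \labelcref{itm:CK} and the approximate identity of \cref{cor:approx_id} for nondegeneracy, injectivity, and compactness of the left action. The only cosmetic difference is that you make explicit the identification $\varphi(\mathfrak{s}_v)=\sum_{e\in vE^1}\theta_{\mathfrak{s}_e,\mathfrak{s}_e}$, which the paper leaves implicit.
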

\begin{proof}
	Routine calculations show that $X$ is a right pre-Hilbert $A$-module under the given right action and inner product. For each linear combination $\sum_i \mathfrak{s}_{e_i} a_i \in X$ we have, using \labelcref{itm:cf-range} (of \cref{dfn:twisted_lcsc_algebra}) at the third equality,
	\begin{align*}
		\Big\|\sum_i \mathfrak{s}_{e_i} a_i \Big\|_{X}^2
		&= \Big\| \sum_{i,j} \langle   \mathfrak{s}_{e_i} a_i \mid \mathfrak{s}_{e_j} a_j \rangle \Big\|_{C^*(\Gamma \bowtie \Gg, \Sigma)}
		= \Big\| \sum_{i,j} \delta_{e_i,e_j} a_i^* \mathfrak{s}_{s(e_i)} a_j \Big\|_{C^*(\Gamma \bowtie \Gg, \Sigma)}\\
		&=\Big\|\Big(\sum_{i} \mathfrak{s}_{e_i}a_i\Big)^*\Big(\sum_j \mathfrak{s}_{e_j} a_j\Big)\Big\|_{C^*(\Gamma \bowtie \Gg, \Sigma)}
		=
		\Big\|\sum_{i } \mathfrak{s}_{e_i} a_i \Big\|_{C_{\II}^*(\Lambda \bowtie \Gg, \Sigma)}^2,
	\end{align*}
	so the norm on $X$ agrees with the $C^*$-norm on $C^*_{\II}(\Lambda \bowtie \Gg, \Sigma)$. In particular, $X$ is complete.
	
	We claim that $A X \subseteq X$. It suffices to show that $a\mathfrak{s}_e \in X$ for each $a$ in the spanning set described in \cref{cor:spanning_set_ssa} and each $e \in E^1$.
	For this, fix $e \in E^1$, $f \in C(\II)$, and $(\gamma_1,g,\gamma_2) \in \Gamma \fibre{r}{s} \Gg \fibre{s}{s} \Gamma$. Then \labelcref{eq:alternate_mult} gives
	\begin{align*}
		\frakhatCImap(f) \mathfrak{s}_{\gamma_1} \mathfrak{s}_g \mathfrak{s}_{\gamma_2}^* \mathfrak{s}_e
		&= \sum_{\gamma_2 \alpha = e \beta \in \gamma_2 \vee e} \frakhatCImap\big(f\, \ol{\Sigma_{\bullet}(\gamma_2,\alpha)}\Sigma_{\bullet}(\beta,e)\big) \mathfrak{s}_{\gamma_1} \mathfrak{s}_{g}  \mathfrak{s}_{\alpha} \mathfrak{s}_{\beta}^*.
	\end{align*}
	For each term in this sum,
	\begin{align*}
		\mathfrak{s}_{\gamma_1} \mathfrak{s}_{g}  \mathfrak{s}_{\alpha} \mathfrak{s}_{\beta}^* & =
		\frakhatCImap\big(\Sigma_{\bullet}(g,\alpha) \ol{\Sigma_{\bullet}(g \la \alpha,g \ra \alpha)}\big) \mathfrak{s}_{\gamma_1} \mathfrak{s}_{g\la \alpha} \mathfrak{s}_{g \ra \alpha} \mathfrak{s}_{\beta}^*\\
		&=  \frakhatCImap\big(\Sigma_{\bullet}(g,\alpha) \ol{\Sigma_{\bullet}(g \la \alpha,g \ra \alpha)} \Sigma_{\bullet}(\gamma_1, g \la \alpha)\big)
		\mathfrak{s}_{\gamma_1 (g \la \alpha)} \mathfrak{s}_{g \ra \alpha} s_{\beta}^*.
	\end{align*}
	Since $d(\gamma_2) \in \NN^{k} \times \{0\}$ and $d(e) \in \{0\} \times \NN$, $d(\gamma_2) \vee d(e) = d(\gamma_2) + d(e)$, so $d(\alpha) = d(e)$. That is, $\alpha \in E^1$. Since self-similar actions are degree preserving we have $g \la \alpha \in E^1$. By uniqueness of factorisations in $\Lambda$ there exist unique $\alpha' \in E^1$ and $\gamma_1' \in \Gamma$ such that $\gamma_1(g \la \alpha) = \alpha' \gamma_1'$. So
	\begin{align*}
		\mathfrak{s}_{\gamma_1 (g \la \alpha)} \mathfrak{s}_{g \ra \alpha} \mathfrak{s}_{\beta}^*
		&= \mathfrak{s}_{\alpha'\gamma_1'} \mathfrak{s}_{g \ra \alpha} \mathfrak{s}_{\beta}^*
		= \mathfrak{s}_{\alpha'} \Big( \frakhatCImap\big(\ol{\Sigma_{\bullet}(\alpha',\gamma_1')}\big) \mathfrak{s}_{\gamma_1'}  \mathfrak{s}_{g \ra \alpha} \mathfrak{s}_{\beta}^*\Big) \in \mathfrak{s}_{\alpha'} A \subseteq  X.
	\end{align*}
	Hence, $\frakhatCImap(f) \mathfrak{s}_{\gamma_1} \mathfrak{s}_g \mathfrak{s}_{\gamma_2}^* \mathfrak{s}_e \in X$.
	
	To see that $X$ is a $C^*$-correspondence we must show that $\varphi(a)$ is adjointable for each $a \in A$. It suffices to show that $\varphi(a^*)$  is an adjoint for $\varphi(a)$. Given spanning elements $\mathfrak{s}_e b$ and $\mathfrak{s}_f c$ of $X$ we have
	\[
	\langle \mathfrak{s}_e b \mid \mathfrak{s}_f c \rangle = \delta_{e,f} b^* \mathfrak{s}_{s(e)} c = (\mathfrak{s}_e b)^*(\mathfrak{s}_f c)
	\]
	so linearity and continuity imply that $\langle \xi \mid \eta \rangle = \xi^* \eta$, computed in $C^*_{\II}(\Lambda \bowtie \Gg, \Sigma)$, for all $\xi,\eta \in X$. In particular, $\langle \varphi(a) \xi \mid \eta \rangle = (a \xi^*) \eta = \xi^* a^* \eta = \langle \xi \mid \varphi(a^*) \eta \rangle$, so $\varphi(a^*)$ is indeed an adjoint for $\varphi(a)$.

	The $(k+1)$-graph $\Lambda$ is row-finite with no sources, so for each $v \in \Gamma^0$ the set $ vE^1 \subseteq v \Lambda$ is finite and exhaustive \cite[Proof of Lemma~B.2]{RSY04}.
	Since $\mathfrak{s}_e = \mathfrak{s}_e \mathfrak{s}_{s(e)} \in X$, we have $\mathfrak{s}_e \mathfrak{s}_e^{*} \in \Kk_A(X)$ for each $e \in E^1$. Since $\Lambda$ has no sources, and since $\mathfrak{s}$ satisfies~\labelcref{itm:CK} by \cref{rmk:I-relation_relationships}, for each $v \in \Gamma^0$ we have,
	\[
	\mathfrak{s}_v = \sum_{e \in v E^1} \mathfrak{s}_e \mathfrak{s}_e^* \in \Kk_A(X).
	\]
	Since, by \cref{cor:approx_id},  $(\sum_{v \in F} \mathfrak{s}_v)_{F \subseteq E^0 \text{ finite}}$ is an approximate unit for $C_{\II}^*(\Lambda \bowtie \Gg, \Sigma)$, we obtain $\varphi(A) \subseteq \Kk_A(X)$. This approximate unit belongs to $A$, so the left action is nondegenerate. If $\varphi(a) = 0$ then
	\begin{align*}
		a \mathfrak{s}_v = \sum_{e \in vE^1} a \mathfrak{s}_e \mathfrak{s}_e^*
		=  \sum_{e \in vE^1} \varphi(a) \mathfrak{s}_e \mathfrak{s}_e^* = 0,
	\end{align*}
	so $a = \lim_{F} \sum_{v \in F} a\mathfrak{s}_v  = 0$. That is, $\varphi$ is injective.
\end{proof}

\begin{ntn}
	By construction, $A$ and $X$ are subsets of $C^*_{\II}(\Lambda \bowtie \Gg, \Sigma)$. We show that there is an isomorphism of $C^*_{\II}(\Lambda \bowtie \Gg, \Sigma)$ onto the Cuntz--Pimsner algebra $\Oo_X$ that carries $A$ to its canonical image $j_A(A)$ in $\Oo_X$ and carries $X$ to its canonical image $j_X(X)$. To do so, we invoke the universal property of $\Oo_X$, with respect to the inclusion maps $a \mapsto a : A \to C^*_{\II}(\Lambda \bowtie \Gg, \Sigma)$ and $x \mapsto x : X \to C^*_{\II}(\Lambda \bowtie \Gg, \Sigma)$. We formalise this by giving these maps names: $\iota_A : A \to C^*_{\II}(\Lambda \bowtie \Gg, \Sigma)$ is the inclusion $A \owns a \mapsto \iota_A(a) \coloneqq a \in C^*_{\II}(\Lambda \bowtie \Gg, \Sigma)$, and similarly $\iota_X : X \to C^*_{\II}(\Lambda \bowtie \Gg, \Sigma)$ is the inclusion $X \owns x \mapsto \iota_X(x) \coloneqq x \in C^*_{\II}(\Lambda \bowtie \Gg, \Sigma)$.
\end{ntn}

\begin{prop}\label[prop]{prop:CP-model} Let $A \coloneqq \Pi(C_{\II}^*(\Gamma \bowtie \Gg, \Sigma)) \subseteq C_{\II}^*(\Lambda \bowtie \Gg, \Sigma)$ and let $X \subseteq C_{\II}^*(\Lambda \bowtie \Gg, \Sigma)$ be as in \cref{lem:correspondence}.
	The inclusion maps $\iota_A \colon A \to C_{\II}^*(\Lambda \bowtie \Gg, \Sigma)$ and $\iota_X \colon X \to C_{\II}^*(\Lambda \bowtie \Gg, \Sigma)$ comprise a covariant representation of $X$ in $C_{\II}^*(\Lambda \bowtie \Gg, \Sigma)$. Let $(j_A,j_X)$ be a universal covariant representation of $X$ in $\Oo_{X}$. There is a unique isomorphism $\Xi \colon \Oo_{X} \to  C_{\II}^*(\Lambda \bowtie \Gg, \Sigma)$ satisfying $\Xi(j_A(a)) = \iota_A(a) = a$ and $\Xi(j_X(x)) = \iota_X(x) = x$ for all $a \in A$ and $x \in X$.
\end{prop}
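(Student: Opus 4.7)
The plan is to invoke the universal property of $\Oo_X$ to produce $\Xi$, show $\Xi$ is surjective by reducing to the spanning set of \cref{cor:spanning_set_ssa}, and prove injectivity using a gauge-invariant uniqueness theorem for Cuntz--Pimsner algebras.

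First, I would verify that $(\iota_A, \iota_X)$ is a Toeplitz representation of $X$, meaning $\iota_X$ is $A$-bilinear via $(\iota_A, \iota_A)$ and $\iota_X(x)^* \iota_X(y) = \iota_A(\langle x \mid y \rangle)$. Since $\iota_A$ and $\iota_X$ are inclusions, and \cref{lem:correspondence} was arranged so that the inner product is $\langle x \mid y \rangle = x^* y$ computed in $C^*_\II(\Lambda \bowtie \Gg, \Sigma)$, these identities are automatic. For Cuntz--Pimsner covariance, since $\varphi \colon A \to \Kk_A(X)$ is injective (\cref{lem:correspondence}), the Katsura ideal is all of $A$, so I need $\iota_A(a) = \Psi(\varphi(a))$ for every $a \in A$, where $\Psi\colon \Kk_A(X) \to C^*_\II(\Lambda \bowtie \Gg, \Sigma)$ is the homomorphism determined by $\Psi(\Theta_{x,y}) = \iota_X(x)\iota_X(y)^* = xy^*$. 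The key computation is that for each $v \in \Gamma^0$ the generator $\varphi(\mathfrak{s}_v) = \sum_{e \in vE^1} \Theta_{\mathfrak{s}_e, \mathfrak{s}_e}$ satisfies $\Psi(\varphi(a\mathfrak{s}_v)) = \sum_{e \in vE^1} a \mathfrak{s}_e \mathfrak{s}_e^* = a\mathfrak{s}_v$ for all $a \in A$, where the last equality uses the Cuntz--Krieger relation (valid by \cref{rmk:I-relation_relationships}). Summing over a finite $F \subseteq \Gamma^0$ and invoking the approximate identity of \cref{cor:approx_id} together with norm-continuity of $\Psi \circ \varphi$ then upgrades this to $\iota_A(a) = \Psi(\varphi(a))$ for every $a \in A$. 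The universal property of $\Oo_X$ delivers $\Xi\colon \Oo_X \to C^*_\II(\Lambda \bowtie \Gg, \Sigma)$ with $\Xi \circ j_A = \iota_A$ and $\Xi \circ j_X = \iota_X$.

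For surjectivity, \cref{cor:spanning_set_ssa} reduces the problem to showing that each $\frakhatCImap(f) \mathfrak{s}_\lambda \mathfrak{s}_g \mathfrak{s}_\mu^*$ lies in the image of $\Xi$. Using the factorisation $\Lambda = E^* \bowtie \Gamma$ together with \labelcref{itm:cfc-mult}, each $\mathfrak{s}_\lambda$ can be rewritten, up to a central element from $\frakhatCImap(C(\II))$, as a product $\mathfrak{s}_{e_1} \cdots \mathfrak{s}_{e_n} \mathfrak{s}_\gamma$ of generators with $e_i \in E^1$ and $\gamma \in \Gamma$. Each $\mathfrak{s}_{e_i}$ belongs to $X$ and the remaining factors (including $\mathfrak{s}_\gamma \mathfrak{s}_g$, $\mathfrak{s}_\mu^*$, and the central factors) belong to $A$ or to $X \cup X^*$, all of which are in the image of $\Xi$.

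The main obstacle is injectivity, which I would handle via the Katsura-type gauge-invariant uniqueness theorem for Cuntz--Pimsner algebras. The canonical gauge action on $\Oo_X$ fixes $j_A$ and sends $j_X(x)$ to $z\, j_X(x)$. On the target, the degree grading on $\Lambda \bowtie \Gg$ induces a $\TT$-action $\beta$ by $\beta_z(\mathfrak{s}_{\lambda g}) = z^{d(\lambda)_{k+1}} \mathfrak{s}_{\lambda g}$ and $\beta_z \circ \frakCImap_v = \frakCImap_v$. This action is well-defined because the self-similar action is degree preserving (\cref{dfn:self-similar_action}), making the $(k+1)$-st coordinate of degree additive under the Zappa--Sz\'ep composition, and because the central elements $\frakhatCImap(\Sigma_\bullet(\cdot,\cdot))$ appearing in~\labelcref{itm:cfc-mult} are $\beta$-fixed. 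By construction $\beta$ fixes $\iota_A(A)$ and scales $\iota_X(X)$ by $z$, so $\Xi$ intertwines the two actions; and $\iota_A = \Xi \circ j_A$ is injective by \cref{lem:homotopy_twist_inclusion}. Katsura's gauge-invariant uniqueness theorem then yields injectivity of $\Xi$.
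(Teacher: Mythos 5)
Your proposal is correct and follows essentially the same route as the paper: covariance of $(\iota_A,\iota_X)$ is verified via the Cuntz--Krieger relation $\mathfrak{s}_v = \sum_{e \in vE^1}\mathfrak{s}_e\mathfrak{s}_e^*$ together with the approximate identity of \cref{cor:approx_id}, surjectivity follows because the image contains the generators, and injectivity is obtained from the gauge action $\beta_z(\mathfrak{s}_{\lambda g}) = z^{d(\lambda)_{k+1}}\mathfrak{s}_{\lambda g}$ via Katsura's gauge-invariant uniqueness theorem. The only difference is that you spell out some details (the identification of the Katsura ideal with $A$, and the factorisation of $\mathfrak{s}_\lambda$ into edge and $\Gamma$ factors for surjectivity) that the paper compresses into citations and a one-line remark.
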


\begin{proof}
	By construction $(\iota_A,\iota_X)$ is a representation of the correspondence $(\varphi,X)$ in $C^*_{\II}(\Lambda \bowtie \Gg, \Sigma)$. Since $\iota_A(\mathfrak{s}_v) = \sum_{e \in vE^1} \mathfrak{s}_e \mathfrak{s}_e^* = \sum_{e \in vE^1} \iota_X(\mathfrak{s}_e)\iota_X(\mathfrak{s}_e)^*$ for all $v \in \Lambda^0$, and since, by \cref{cor:approx_id}, $(\sum_{v \in F} \mathfrak{s}_v)_{F \subseteq \Lambda^0\text{ finite}}$ is an approximate identity for $A$, the pair $(\iota_A,\iota_X)$ is covariant \cite[Lemma~A.3.16]{Mun20}. Hence, there is a unique $*$-homomorphism $\Xi \colon \Oo_{X} \to C_{\II}^*(\Lambda \bowtie \Gg, \Sigma)$ such that $\Xi(j_A(a)) = \iota_A(a)$ and $\Xi(j_X(x)) = \iota_X(x)$ for $a \in A$ and $x \in X$.
	
	The standard universal-property argument (see, for example, \cite[Proposition~2.1]{Rae05}) yields a strongly continuous action $\beta \colon \TT\to \Aut(C^*_{\II}(\Lambda \bowtie \Gg, \Sigma))$ such that $\beta_z( \frakhatCImap(f) \mathfrak{s}_{\lambda g}) = z^{d(\lambda)_{k+1}} \frakhatCImap(f) \mathfrak{s}_{\lambda g}$ for all $f \in C(\II)$ and $\lambda g \in \Lambda \bowtie \Gg$.
	Since $\iota_A$ is injective and $\Xi$ intertwines $\beta$ with the gauge action on $\Oo_X$, \cite[Theorem~6.4]{Kat04} implies that $\Xi$ is injective. The subalgebra $\Xi(\Oo_{X})$ contains the generators of $C_{\II}^*(\Lambda \bowtie \Gg, \Sigma)$, so $\Xi$ is a $*$-isomorphism.
\end{proof}

\begin{proof}[Proof of \cref{thm:cocycle_homotopy_independence}] \label{proof:main_theorem}
	We induct on $k$. If $k = 0$ the result is \cref{prop:groupoids_K_cts}.
	
	Fix a
	jointly faithful self-similar action of a discrete amenable groupoid $\Gg$ on a row-finite $(k+1)$-graph $\Lambda$ with no sources and fix a homotopy $\Sigma \colon \II \times \Lambda \to \TT$ of $2$-cocycles.
	Let $\Lambda = E^* \bowtie \Gamma$ as in \cref{ex:k-graph_zs}. Suppose as an inductive hypothesis that $K_*(C^*(\Gamma \bowtie \Gg, \bullet))$ is constant along homotopies.
	
	Let $B \coloneqq C_{\II}^*(\Lambda \bowtie \Gg, \Sigma)$. As in \cref{lem:correspondence}, let $A \coloneqq \Pi(C_{\II}^*(\Gamma \bowtie \Gg, \Sigma)) \subseteq B$, and let $X \subseteq B$ be the $C^*$-correspondence constructed in that lemma, and write $\iota_A \colon A \to B$ and $\iota_X\colon X \to B$ for the inclusion maps. Let $(j_A,j_X)$ be a universal representation of $X$ in the Cuntz--Pimsner algebra $\Oo_X$.
	By \cref{prop:CP-model} there is an isomorphism $\Xi \colon \Oo_{X} \to B$ satisfying $\Xi(j_A(a)) = \iota_A(a) = a$ and $\Xi(j_X(x)) = \iota_X(x) = x$ for all $a \in A$ and $x \in X$.
	
	Fix $t \in \II$. Let
	\begin{equation}\label{eq:ItB}
		I_t^B \coloneqq B\frakhatCImap(C_0(\II \setminus \{t\})) = \frakhatCImap(C_0(\II \setminus \{t\}))B.
	\end{equation}
	Let $I_t^A \coloneqq A \cap I_t^B$. We show that $I_t^A = A\frakhatCImap(C_0(\II \setminus \{t\}))$. We have $A\frakhatCImap(C_0(\II \setminus \{t\})) \subseteq I_t^A$ by definition. To see that the reverse containment holds fix an approximate identity $(e_i)$ for $C_0(\II \setminus \{t\})$ and $a \in A \cap I_t^B$. Since $\frakhatCImap(e_i)$ is an approximate identity for $I_t^B$ we have $a = \lim_i a \frakhatCImap(e_i)$. So the Cohen--Hewitt Factorisation Theorem \cite[Theorem~2.5]{Hew64} (see also \cite{Coh59}) for the $\frakhatCImap(C_0(\II \setminus \{t\}))$-module $A \cap I^B_t$ implies that $a \in A\frakhatCImap(C_0(\II \setminus \{t\}))$. Consequently, we have $I_t^A = A \cap I_t^B = A\frakhatCImap(C_0(\II \setminus \{t\}))$.	
	Similarly, $X I_t^A \coloneqq \clsp\{x \cdot a \colon x  \in X, a \in I_t^A\}$ satisfies
	\begin{equation}\label{eq:fibre_factorisation}
		X I_t^A = X \cap I_t^B = X \frakhatCImap(C_0(\II \setminus \{t\})).
	\end{equation}
	
	We define $B_t \coloneqq B/ I_t^B$
	and write $q^B_t \colon B \to B_t$ for the quotient map. Define $A_t \coloneqq q^B_t(A)$. By the preceding paragraph there is an isomorphism $A/I_t^A \cong A_t$ that carries $a + I_t^A$ to $\iota_A(a) + I_t^B$. 	
	By \cite[Corollary~1.4]{KatIdeal} the subspace $X I_t^A$ is an $A$-submodule of $X$ and $ X / X I_t^A$ is a right $A_t$-module with respect to the obvious right $A_t$-action. Let $X_t \coloneqq q^B_t (X) \subseteq B_t$. By the preceding paragraph there is an isomorphism $ X / X I_t^A \cong X_t$ of right $A_t$-modules that carries $x + X I_t^A$ to $\iota_X(x) + I_t^B$.
	
	Since $\frakhatCImap(C_0(\II \setminus \{t\}))$ is central in $\Mm (B)$ we have $a\frakhatCImap(f)\cdot \mathfrak{s}_e b = a\mathfrak{s}_e b\frakhatCImap(f)$ for all $f \in C_0(\II \setminus \{t\})$, $a \in A$ and $\mathfrak{s}_eb \in X$.
	Hence by \labelcref{eq:fibre_factorisation} we have $a\frakhatCImap(f)\cdot \mathfrak{s}_e b \in X I_t^A$. In particular, the left action of $A$ on $X$ descends to an adjointable left action of $A_t$ on $X_{t}$, so $X_t$ is a $C^*$-correspondence over $A_t$. Since, for $v \in \Lambda^0$, we have $\mathfrak{s}_v = \sum_{e \in vE^1} \mathfrak{s}_e \mathfrak{s}_e^*$ in $B$, the same is true in $B_t$. Since, by \cref{cor:approx_id}, $(\sum_{v \in F} \mathfrak{s}_v + I_t^B)_{F \subseteq E^0 \text{ finite}}$ is an approximate unit in $B_t$, the left action of $A_t$ on $X_t$ is by compact operators.
	
	For injectivity of the left action suppose that $a + I^A_t \in A_t$ satisfies $(a + I^A_t) (\mathfrak{s}_e b + XI^A_t) = 0$ for all $e \in E^1$ and $b \in A$. That is,  $a\mathfrak{s}_e b \in XI^A_t $ for all $e \in E^1$ and $b \in A$. In particular, $a\mathfrak{s}_e \in XI^A_t \subseteq I_t^B$ for all $e \in E^1$. This implies that $a \mathfrak{s}_e \mathfrak{s}_e^* \in I_t^B$ for all $e \in E^1$, and so $a \mathfrak{s}_v = \sum_{e \in vE^1} a \mathfrak{s}_e \mathfrak{s}_e^* \in I_t^B$ for all $v \in E^0$. Since, by \cref{cor:approx_id}, $(\sum_{v \in F} \mathfrak{s}_v)_{F \subseteq E^0 \text{ finite}}$ is an approximate unit we have $a \in I_t^B$. Fix an approximate unit $(h_{i})$ for $C_0(\II \setminus \{t\})$. Then $(\frakhatCImap(h_i))$ is an approximate unit for $I_t^B$ by \eqref{eq:ItB}. So, $a = \lim \frakhatCImap(h_i) a \in I^A_t$. That is, $a + I_t^A = 0$.
	
	Let $(j_{A_t},j_{X_t})$ be a universal covariant representation of $X_t$ in $\Oo_{X_t}$. 	
	The inclusions $\iota_{A_t} \colon A_t \to B_t$ and $\iota_{X_t} \colon X_t \to B_t$ define a covariant representation $(\iota_{A_t},\iota_{X_t})$ of $X_t$ in $B_t$. By the universal property of $\Oo_{X_t}$ there is a unique $*$-homomorphism $\Xi_t \colon \Oo_{X_t} \to B_t$ such that
	\begin{equation}\label{eq:thm_1}
		\Xi_t(j_{A_t}(a + I_t^A)) = q_t^B(a)  \quad \text{ and } \quad \Xi_t(j_{X_t}(x + XI^A_t)) = q_t^B(x)
	\end{equation}
	for all $a \in A$ and $x \in X$. 	
	
	We construct an inverse to $\Xi_t$. The pair $(q_t^B \circ \iota_A, q_t^B \circ \iota_X)$ is a coisometric correspondence morphism (in the sense of \cite[Definition~1.3]{Bre10}) from $X$ to $X_t$, so by \cite[Proposition~1.4]{Bre10} there is a unique $*$-homomorphism $\Phi \colon \Oo_{X} \to \Oo_{X_t}$ satisfying $\Phi(j_{A}(a)) = j_{A_t}(a + I_t^A)$ and $\Phi(j_X(x)) = j_{X_t} (x + X I_t^A)$ for all $a \in A$ and $x \in X$. For each $a \in A$ and $x \in X$,
	\begin{equation}\label{eq:thm_2}
		\Phi \circ \Xi^{-1} (a) = j_{A_t}(a+I_t^A)\quad\text{ and }\quad
		\Phi \circ \Xi^{-1}(x) = j_{X_t}(x + X I_t^A).
	\end{equation}
	
	Since $\Oo_X$ is generated by $j_A(A)$ and $j_X(X)$, \cref{prop:CP-model} shows that $B$ is generated by $\Xi(j_A(A)) = A$ and $\Xi(j_X(X)) = X$. So $I^B_t$ is generated by $A\frakhatCImap(C_0(\II \setminus \{t\})) = I^A_t$, and we saw at~\labelcref{eq:fibre_factorisation} that $X\frakhatCImap(C_0(\II \setminus \{t\})) = X I^A_t$. If $a \in I^A_t$, then $\Phi \circ \Xi^{-1} (a) = j_{A_t}(a+I_t^A) = 0$, and if $x \in X I^A_t$, then $\Phi \circ \Xi^{-1}(x) = j_{X_t}(x +  X I^A_t) = 0$. Hence, $I^B_t \subseteq \ker(\Phi \circ \Xi^{-1})$, and so $\Phi \circ \Xi^{-1}$ descends to a $*$-homomorphism from $B_t$ to $\Oo_{X_t}$ such that $q_t^B(a) \mapsto j_{A_t}(a + I_t^A)$ and $q_t^B(x) \mapsto j_{X_t}(x + X I_t^A)$ for all $a \in A$ and $x \in X$. This map and $\Xi_t$ are mutually inverse on generators, so $\Xi_t$ is an isomorphism.
	
	By
	\cite[Theorem~8.6]{Kat04} and the final assertion of \cite[Theorem~4.4]{KPS18} (regarding naturality with respect to correspondence morphisms) there are homomorphisms $\partial$ and $\partial_t$ such that the diagram
	
	\begin{equation}\label{eq:large_diagram}
		\begin{tikzcd}[ampersand replacement=\&,column sep=25pt]
			{K_0(A)} \&\& {K_0(A)} \&\& {K_0(\Oo_X)} \\
			\& {K_0(A_t)} \& {K_0(A_t)} \& {K_0(\Oo_{X_t})} \\
			\& {K_1(\Oo_{X_t})} \& {K_1(A_t)} \& {K_1(A_t)} \\
			{K_1(\Oo_X)} \&\& {K_1(A)} \&\& {K_1(A)}
			\arrow["{\id - [X]}", from=1-1, to=1-3]
			\arrow["{K_0(q_t^B\circ \iota_A)}"{inner sep=0pt}, from=1-1, to=2-2]
			\arrow["{K_0( j_A)}", from=1-3, to=1-5]
			\arrow["{K_0(q_t^B\circ \iota_A)}", from=1-3, to=2-3, pos=0.4]
			\arrow["{K_0(\Phi)}"'{inner sep=0pt}, from=1-5, to=2-4]
			\arrow["\partial",from=1-5, to=4-5]
			\arrow["{\id - [X_t]}", from=2-2, to=2-3]
			\arrow["{K_0(j_{A_t})}", from=2-3, to=2-4]
			\arrow["\partial_t",from=2-4, to=3-4]
			\arrow["\partial_t",from=3-2, to=2-2]
			\arrow["{K_1( j_{A_t})}", from=3-3, to=3-2]
			\arrow["{\id - [X_t]}", from=3-4, to=3-3]
			\arrow["\partial",from=4-1, to=1-1]
			\arrow["{K_1(\Phi)}"'{inner sep=0pt}, from=4-1, to=3-2]
			\arrow["{K_1(q_t^B\circ \iota_A)}", from=4-3, to=3-3, pos=0.4]
			\arrow["K_1( j_A)", from=4-3, to=4-1]
			\arrow["{K_1(q_t^B\circ \iota_A)}"{inner sep=0pt}, from=4-5, to=3-4]
			\arrow["{\id - [X]}", from=4-5, to=4-3]
		\end{tikzcd}
	\end{equation}
	commutes and the rectangular six-term sequences in it are exact.

	Let $\varepsilon_t^A \colon C_{\II}^*(\Gamma \bowtie \Gg, \Sigma) \to C^*(\Gamma \bowtie \Gg, \Sigma_t)$ be as in \cref{lem:epsilon_evaluation} and recall that $\ker(\varepsilon_t^A) = C_{\II}^*(\Gamma \bowtie \Gg, \Sigma) \frakhatCImap(C(\II \setminus \{t\}))$. The isomorphism $\Xi$ takes $\ker(\varepsilon_t^A)$ to $\ker(q_t^B \circ \iota_A) = I_t^A$, so $\Xi$ descends to an isomorphism $\widetilde{\Xi} \colon C^*(\Gamma \bowtie \Gg, \Sigma_t) \to A_t$ such that $\widetilde{\Xi} \circ \varepsilon_t^A = (q_t^B \circ \iota_A) \circ \Xi$.
	By the inductive hypothesis, the maps $K_*(\varepsilon_t^A)$ are isomorphisms. Since $\widetilde{\Xi}$ and $\Xi$ are isomorphisms, so are the maps they induce in $K$-theory, so the $K_*(q_t^B \circ \iota_A) = K_*(\widetilde{\Xi}) \circ K_*(\varepsilon_t^A) \circ K_*(\Xi^{-1})$ are also isomorphisms. The Five Lemma applied to \labelcref{eq:large_diagram} implies that the $K_*(\Phi)$ are isomorphisms. Hence, the maps $K_*(\Xi_t \circ \Phi \circ \Xi^{-1})$ are isomorphisms. By equations \labelcref{eq:thm_1}~and~\labelcref{eq:thm_2} the map $\Xi_t \circ \Phi \circ \Xi^{-1}$ agrees with $q_t^B$ on generators, so the maps $K_*(q_t^B) \colon K_*(B) \to K_*(B_t)$ are isomorphisms.
	
	Let $\varepsilon_t^B \colon B \to C^*(\Lambda \bowtie \Gg, \Sigma_t)$ be as in \cref{lem:epsilon_evaluation}. Since $\ker(\varepsilon_t^B) = I_t^B = \ker(q_t^B)$ there is an isomorphism $B_t \cong C^*(\Lambda \bowtie \Gg, \Sigma_t)$ taking $q_t^B(b)$ to $\varepsilon_t^B(b)$ for all $b \in B$. It follows that $K_*(\varepsilon_t^B) \colon K_*(B) \to K_*(C^*(\Lambda \bowtie \Gg, \Sigma_t))$ is an isomorphism, completing the induction.	
\end{proof}

We recover a result of Gillaspy as a special case of \cref{thm:cocycle_homotopy_independence}.
\begin{cor}[{\cite[Theorem~4.1]{Gil15}}]
	Let $\Lambda$ be a row-finite $k$-graph with no sources. Then $K_*(C^*(\Lambda, \bullet))$ is constant along homotopies.
\end{cor}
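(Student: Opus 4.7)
The plan is to deduce this corollary directly from the main theorem, \cref{thm:cocycle_homotopy_independence}, by viewing $\Lambda$ as a trivial Zappa--Sz\'ep product. Specifically, I would take $\Gg \coloneqq \Lambda^0$, regarded as the trivial (discrete) groupoid with unit space $\Lambda^0$ in which every morphism is a unit. This groupoid is countable because $\Lambda$ is countable, and it is amenable because every isotropy group $v\Gg v = \{v\}$ is trivial (see \cref{rmk:amenability}). There is a unique (necessarily self-similar) action of $\Gg$ on $\Lambda$ in which every $g \la \lambda$ and $g \ra \lambda$ is forced by source/range considerations, and the resulting Zappa--Sz\'ep product satisfies $\Lambda \bowtie \Gg \cong \Lambda$ as categories, as observed in the example immediately following \cref{prop:relation_relationships}.

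Next I would verify the joint faithfulness hypothesis. For each $v \in \Lambda^0$ and each $n \in \NN^k$, since $\Lambda$ has no sources we may pick any $\lambda \in v\Lambda^n$; then the map $g \mapsto (g \la \lambda, g \ra \lambda)$ is automatically injective on $v\Gg v = \{v\}$, since its domain is a singleton. So the action is jointly faithful.

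Under the isomorphism $\Lambda \bowtie \Gg \cong \Lambda$, a homotopy $\Sigma$ of $2$-cocycles on $\Lambda$ corresponds (by the same formulas) to a homotopy of $2$-cocycles on $\Lambda \bowtie \Gg$, and the $C^*$-algebras $C^*(\Lambda, \Sigma_t)$ and $C^*_\II(\Lambda, \Sigma)$ are canonically isomorphic to $C^*(\Lambda \bowtie \Gg, \Sigma_t)$ and $C^*_\II(\Lambda \bowtie \Gg, \Sigma)$ respectively, via isomorphisms intertwining the evaluation maps $\varepsilon_t$ of \cref{lem:epsilon_evaluation}. Applying \cref{thm:cocycle_homotopy_independence} to this setup then gives that $K_*(\varepsilon_t)$ is an isomorphism for every $t \in \II$, which is exactly the statement that $K_*(C^*(\Lambda, \bullet))$ is constant along homotopies.

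There is essentially no obstacle here: the only thing to be careful about is to check that the isomorphism $\Lambda \bowtie \Gg \cong \Lambda$ is compatible with the evaluation homomorphisms, so that the conclusion of \cref{thm:cocycle_homotopy_independence} transfers verbatim. This is a routine matter because the $C(\II)$-algebra structures on both sides are defined from the same vertex projections.
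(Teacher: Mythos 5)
Your proposal is correct and follows exactly the same route as the paper: take $\Gg = \Lambda^0$ to be the trivial groupoid, note that $\Lambda \bowtie \Gg \cong \Lambda$, and apply \cref{thm:cocycle_homotopy_independence}. Your explicit verifications of amenability, joint faithfulness, and compatibility of the evaluation maps are all correct routine details that the paper leaves implicit.
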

\begin{proof}
	Let $\Gg = \Lambda^0$, the groupoid consisting of units. Then $\Gg$ is amenable, and $(\Gg,\Lambda)$ is a matched pair with $\Lambda \bowtie \Gg \cong \Lambda$. The result now follows from \cref{thm:cocycle_homotopy_independence}.
\end{proof}

\vspace{10pt}

\scriptsize{
	\noindent \textsc{Alexander Mundey}\\
	School of Mathematics and Applied Statistics,\\ University of Wollongong, NSW 2522, Australia. \\
	Email address: \texttt{amundey@uow.edu.au}\\
}

\scriptsize{
	\noindent \textsc{Aidan Sims}\\
	School of Mathematics and Applied Statistics,\\ University of Wollongong, NSW 2522, Australia. \\
	Email address: \texttt{asims@uow.edu.au}\\
}

\end{document}